\theoremstyle{plain}
\newtheorem{thm}{Theorem}[section]
\newtheorem*{thm*}{Theorem}
\newtheorem{prop}[thm]{Proposition}
\newtheorem{lem}[thm]{Lemma}
\newtheorem{cor}[thm]{Corollary}
\theoremstyle{definition}
\theoremstyle{remark}
\newtheorem{rem}{Remark}[section]
\newcommand{\ric}{\operatorname{Ric}}
\newcommand{\Div}{\operatorname{div}}
\newcommand{\Hess}{\operatorname{Hess}}
\newcommand{\bm}{\partial M}
\newcommand{\inte}{\mathrm{Int}\,}
\newcommand{\IR}{\operatorname{InRad}}
\newcommand{\med}{\operatorname{med}}
\newcommand{\CD}{\operatorname{CD}}
\title[Upper bounds for Poincar\'e constants]{Upper bounds for higher-order Poincar\'e constants}
\author{Kei Funano}
\address{Division of Mathematics \& Research Center for Pure and Applied Mathematics, Graduate School of Information Sciences, Tohoku University, 6-3-09 Aramaki-Aza-Aoba, Aoba-ku, Sendai 980-8579, Japan}
\email{kfunano@tohoku.ac.jp}
\author{Yohei Sakurai}
\address{Advanced Institute for Materials Research,Tohoku University, 2-1-1 Katahira, Aoba-ku, Sendai, 980-8577, Japan}
\email{yohei.sakurai.e2@tohoku.ac.jp}
\subjclass[2010]{35P15, 53C23, 58J50}
\keywords{Poincar\'e constant; Eigenvalue of the Laplacian; Multi-way isoperimetric constant}
\date{October 27, 2019}
\begin{document}
\maketitle

\begin{abstract}
We introduce higher-order Poincar\'e constants for compact weighted
 manifolds and estimate them from
 above in terms of subsets. These estimates imply upper bounds
 for eigenvalues of the weighted Laplacian and the first nontrivial eigenvalue of
 the $p$-Laplacian. In the case of the closed eigenvalue problem and the Neumann eigenvalue problem these are related with the estimates obtained by Chung-Grigor'yan-Yau and Gozlan-Herry. We also obtain similar upper
 bounds for Dirichlet eigenvalues and multi-way isoperimetric
 constants. As an application,
 for manifolds with boundary of non-negative dimensional weighted Ricci curvature,
 we give upper bounds for inscribed radii in terms of dimension and the first Dirichlet Poincar\'e constant.
\end{abstract}

\section{Introduction}Let $M=(M,m)$ be a compact weighted Riemannian manifold,
namely,
$M=(M,g)$ is a (connected) compact Riemannian manifold equipped with the
Riemannian distance $d$ determined by $g$ and $m$ is a (probability) weighted Riemannian volume measure defined as
\begin{equation}\label{eq:weighted volume measure}
m:=e^{-f} \,v_{g}
\end{equation}
for a smooth function $f \in C^{\infty}(M)$ such that $\int_{M}\,e^{-f} \,dv_{g}=1$,
where $v_{g}$ denotes the Riemannian volume measure induced from $g$.
The purpose of this article is to present a unifying way to give upper bounds
for eigenvalues of the Laplacian and the $p$-Laplacian with or without
a boundary condition.
\subsection{Closed manifolds}
We first consider the case where
$M$ is a closed manifold (i.e., its boundary $\partial M$ is empty).
We denote by $\nabla$ and by $\Vert \cdot \Vert$ the gradient operator
and the canonical norm induced from $g$ respectively.
For $k\geq 1$ and $p\in [1,\infty)$,
one of our main objects is the \textit{$k$-th (Neumann) $p$-Poincar\'e constant} defined by
\begin{equation}\label{eq:Poincare constant}
\nu_{k,p}(M,m):=\inf_{L_{k}} \sup_{\phi \in L_{k}\setminus \{0\}} \frac{\int_{M} \,\Vert \nabla \phi \Vert^{p} \,   dm  }{ \int_{M} \,\vert \phi-\int_{M}\,\phi\,dm \vert^{p} \,   dm },
\end{equation}
where the infimum is taken over all $k$-dimensional subspaces $L_{k}$ of the $(1,p)$-Sobolev space $W^{1,p}(M,m)$ with $L_{k}\cap\, \mathcal{C}=\{0\}$ for the set $\mathcal{C}$ of all constant functions on $M$.
In the case of $p=2$,
this Poincar\'e constant is equal to the $k$-th non-trivial eigenvalue of the
weighted Laplacian on $M$ due to the min-max principle. In the case of $p\in (1,\infty)$, the value $\nu_{1,p}(M,m)$ is equivalent to the first eigenvalue of the weighted $p$-Laplacian (more precisely, see Subsection \ref{sec:Properties of Poincare constants}).
Furthermore,
in the case of $p=1$,
the value $\nu_{1,1}(M,m)$ is known to be equivalent to the so-called Cheeger isoperimetric constant (see Subsection \ref{sec:isoperimetric Closed manifolds}).

We now assert one of our main results. For any finite sequence
$\{A_{\alpha}\}_{\alpha=0}^k$ of Borel subsets of $M$ we set
\begin{align*}
\mathcal{D}(\{A_{\alpha}\}):=\min_{\alpha \neq \beta}
d(A_{\alpha},A_{\beta}),
 \end{align*}where $d(A_{\alpha},A_{\beta}):=\inf \{d(x,y) \mid x\in
 A_{\alpha},y\in A_{\beta}\}$.
\begin{thm}\label{thm:main result}
Let $(M,m)$ be a closed weighted Riemannian manifold. For any sequence 
 $\{A_{\alpha}\}^{k}_{\alpha=0}$ of Borel subsets of $M$ we have
\begin{equation}\label{eq:main inequality}
 \nu_{k,p}(M,m)^{\frac{1}{p}}  \leq \frac{2}{ \mathcal{D}(\{A_{\alpha}\})  }\max_{\alpha=0,\dots,k}  \log \frac{e(1-\sum_{\beta\neq \alpha}m(A_{\beta})    )}{m(A_{\alpha})}.
\end{equation}
\end{thm}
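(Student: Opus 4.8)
The plan is to produce, for each of the $k+1$ subsets, a test function concentrated near it, to force these functions to have pairwise disjoint supports, and then to feed the $k$-dimensional subspace they span (after cutting down by one dimension) into the variational definition \eqref{eq:Poincare constant}. First I would discard the trivial cases: we may assume $m(A_\alpha)>0$ for all $\alpha$ and $\mathcal{D}:=\mathcal{D}(\{A_\alpha\})>0$, since otherwise the right-hand side of \eqref{eq:main inequality} is $+\infty$. As the $A_\alpha$ lie at positive pairwise distance they are disjoint, so $\sum_\beta m(A_\beta)\le 1$ and $V_\alpha:=1-\sum_{\beta\neq\alpha}m(A_\beta)\ge m(A_\alpha)>0$; in particular $\log(eV_\alpha/m(A_\alpha))\ge 1$. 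Put $r:=\mathcal{D}/2$ and $d_\alpha:=d(\cdot,A_\alpha)$, which is $1$-Lipschitz with $\Vert\nabla d_\alpha\Vert\le 1$ a.e. For a rate $\lambda>0$ (to be chosen, depending on $\alpha$) set $\phi_\alpha:=(e^{-\lambda d_\alpha}-e^{-\lambda r})_+$. Each $\phi_\alpha$ is Lipschitz, hence in $W^{1,p}(M,m)$, with $\{\phi_\alpha\neq 0\}=\{d_\alpha<r\}$; the triangle inequality $d(A_\alpha,A_\beta)\le d_\alpha(x)+d_\beta(x)$ forces $\{d_\alpha<r\}\cap\{d_\beta<r\}=\emptyset$ for $\alpha\neq\beta$, and since $\phi_\alpha$ is constant on $\{d_\alpha=0\}$ and on $\{d_\alpha\ge r\}$, also $\nabla\phi_\alpha=0$ a.e. off the (pairwise disjoint) annuli $\{0<d_\alpha<r\}$.

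Next comes the linear-algebra step. The span of $\phi_0,\dots,\phi_k$ is $(k+1)$-dimensional, and the functional $\psi\mapsto\int_M\psi\,dm$ is nonzero on it (it is positive on each $\phi_\alpha$), so its kernel there is a $k$-dimensional subspace $L$, and $L\cap\mathcal{C}=\{0\}$ because a constant of mean zero vanishes; thus $L$ is admissible in \eqref{eq:Poincare constant}. Writing $N_\alpha:=\int_M\Vert\nabla\phi_\alpha\Vert^p\,dm$ and $D_\alpha:=\int_M\phi_\alpha^p\,dm>0$, for $\phi=\sum_\alpha c_\alpha\phi_\alpha\in L$ one has $\int_M\phi\,dm=0$, and disjointness of the supports of the $\phi_\alpha$ (and of the $\nabla\phi_\alpha$) gives $\int_M\Vert\nabla\phi\Vert^p\,dm=\sum_\alpha|c_\alpha|^p N_\alpha$ and $\int_M|\phi|^p\,dm=\sum_\alpha|c_\alpha|^p D_\alpha$. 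Hence the quotient in \eqref{eq:Poincare constant} is a weighted mean of the numbers $N_\alpha/D_\alpha$, so it is $\le\max_\alpha N_\alpha/D_\alpha$; using $L$ as competitor yields $\nu_{k,p}(M,m)\le\max_{\alpha}N_\alpha/D_\alpha$.

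It then remains to bound each $N_\alpha/D_\alpha$ by $r^{-p}\bigl(\log(eV_\alpha/m(A_\alpha))\bigr)^p$. On $\{d_\alpha<r\}$ one has $\Vert\nabla\phi_\alpha\Vert\le\lambda e^{-\lambda d_\alpha}$, so $N_\alpha\le\lambda^p\int_{\{d_\alpha<r\}}e^{-p\lambda d_\alpha}\,dm$. I would then introduce the distribution function $\rho_\alpha(t):=m(\{d_\alpha<t\})$, which is non-decreasing, satisfies $\rho_\alpha(t)\ge m(A_\alpha)$ for every $t>0$, and, since $\{d_\alpha<t\}$ avoids each $A_\beta$ ($\beta\neq\alpha$) when $t\le\mathcal{D}$, satisfies $\rho_\alpha(t)\le V_\alpha$ for $t\le r$. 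By the layer-cake formula both $N_\alpha/\lambda^p$ and $D_\alpha$ become Stieltjes integrals on $[0,r)$ against $d\rho_\alpha$; one controls the numerator through $\rho_\alpha\le V_\alpha$, bounds $D_\alpha$ from below using the mass of $\rho_\alpha$ at $0$ together with its collar contribution, and chooses $\lambda$ appropriately (of order $r^{-1}\log(eV_\alpha/m(A_\alpha))$) so as to balance these, arriving at $N_\alpha/D_\alpha\le r^{-p}\bigl(\log(eV_\alpha/m(A_\alpha))\bigr)^p$. Combined with the previous paragraph this is exactly \eqref{eq:main inequality}.

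The first two steps are essentially formal; the hard part will be the last one. The naive estimates $\int_{\{d_\alpha<r\}}e^{-p\lambda d_\alpha}\,dm\le V_\alpha$ and $D_\alpha\ge(1-e^{-\lambda r})^p m(A_\alpha)$ only yield $\nu_{k,p}(M,m)^{1/p}\le 2\mathcal{D}^{-1}\bigl(V_\alpha/m(A_\alpha)\bigr)^{1/p}$, which is far from \eqref{eq:main inequality}. To replace this power by the logarithm and to reach the sharp constant $2/\mathcal{D}$ one must keep the full Stieltjes integrals, exploit the monotonicity of $\rho_\alpha$ (in effect a one-dimensional Hardy-type inequality on $([0,r],d\rho_\alpha)$), and let the rate $\lambda$ adapt to the profile of $\rho_\alpha$ rather than fixing it in advance.
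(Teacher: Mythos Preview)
Your first two steps are correct and coincide with the paper's Lemmas~\ref{lem:domain monotonicity} and~\ref{lem:higher order to smallest}: building disjointly supported $\phi_\alpha$ on the open $r$-balls $\Omega_\alpha$ around the $A_\alpha$ and passing to the mean-zero subspace gives $\nu_{k,p}(M,m)\le\max_\alpha N_\alpha/D_\alpha$, and since you may still choose each $\phi_\alpha$ freely this is really $\nu_{k,p}(M,m)\le\max_\alpha\nu^D_{1,p}(\Omega_\alpha,m)$. The genuine gap is your third step, and the single-exponential profile you commit to cannot close it. In the one-dimensional model with pushforward mass $a$ at $t=0$ and $V-a$ concentrated near $t=r$, for $p=1$ one gets $N/D=\lambda\,[a+(V-a)e^{-\lambda r}]\big/[a(1-e^{-\lambda r})]$; its infimum over $\lambda>0$ equals $V/a$ when $V/a\le 2$ (as $\lambda\to 0$) and remains strictly above $1+\log(V/a)$ for every $V/a>1$ (for instance about $7.5$ versus the target $1+\log 100\approx 5.6$ at $V/a=100$). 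So no choice of a single rate, and no Hardy-type manipulation of this fixed profile, produces the logarithm with the constant in~\eqref{eq:main inequality}; your suggestion to ``let $\lambda$ adapt to the profile of $\rho_\alpha$'' would, if it means anything beyond a single exponential, amount to constructing a near-optimal Dirichlet test function on each $\Omega_\alpha$, which is exactly the problem to be solved.

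What the paper does instead is bound $\nu^D_{1,p}(\Omega_\alpha,m)$ not through one explicit test function but through the \emph{boundary concentration inequality} (Lemma~\ref{lem:local boundary concentration inequality}, proved as in Proposition~\ref{prop:boundary concentration inequality}): feeding the \emph{linear} cutoff $\min\{\varepsilon^{-1}d(\cdot,B_s(\partial\Omega_\alpha)),1\}$ into the Rayleigh quotient yields the one-step relation $(1+\varepsilon^p\,\nu^D_{1,p}(\Omega_\alpha,m))\,m(\Omega_\alpha\setminus B_{s+\varepsilon}(\partial\Omega_\alpha))\le m(\Omega_\alpha\setminus B_s(\partial\Omega_\alpha))$, and \emph{iterating} this in $s$ gives $m_{\Omega_\alpha}(\Omega_\alpha\setminus B_s(\partial\Omega_\alpha))\le\exp\bigl(1-\nu^D_{1,p}(\Omega_\alpha,m)^{1/p}s\bigr)$. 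Since $A_\alpha$ lies at distance $r$ from $\partial\Omega_\alpha$ with relative mass at least $m(A_\alpha)/V_\alpha$, letting $s\uparrow r$ forces $\nu^D_{1,p}(\Omega_\alpha,m)^{1/p}\le r^{-1}\log\bigl(eV_\alpha/m(A_\alpha)\bigr)$, and then $r\uparrow\mathcal{D}/2$ finishes. The iteration is the missing idea: it replaces your single exponential by a whole family of linear test functions, and this is precisely what converts the power $(V_\alpha/m(A_\alpha))^{1/p}$ from your naive estimate into the logarithm.
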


\begin{rem}\label{rem:previous result in linear case}
In the case where $p=2$,
this type of inequality has been obtained by Chung-Grigor'yan-Yau.
Under the same setting as in Theorem \ref{thm:main result},
they \cite{CGY2} have proven the following inequality (see Theorem 3.1 in \cite{CGY2}, and see also \cite{CGY1}):
\begin{equation}\label{eq:CGY}
 \nu_{k,2}(M,m)^{\frac{1}{2}} \leq \frac{1}{  \mathcal{D}(\{A_{\alpha}\}) }\max_{\alpha\neq \beta}  \log \frac{e}{m(A_{\alpha})m(A_{\beta})}.
\end{equation}
When $k=1$,
this inequality (\ref{eq:CGY}) is due to Gromov-Milman \cite{GM}. The proof of (\ref{eq:CGY}) in \cite{CGY1} is based on the eigenfunction expansion of the heat kernel,
and a heat kernel estimate.
Recently,
Gozlan-Herry \cite{GH} shown a similar inequality to (\ref{eq:CGY}) in a
 different way from that in \cite{CGY1} based on a simple geometric
 observation (see Proposition 2.2 in \cite{GH}). In Remark \ref{rem:comparing} we
 will compare our inequality (\ref{eq:main inequality}) with the inequalities of
 Chung-Grigor'yan-Yau (\ref{eq:CGY}) and Gozlan-Herry.
\end{rem}

Let us mention our method of the proof of Theorem \ref{thm:main result}.
We prove the desired inequality by combining the following three principles,
and a geometric observation:
(1) domain monotonicity principle for a domain $\Omega$ in $M$,
which claims that
the $(k+1)$-th Dirichlet $p$-Poincar\'e constant on $\Omega$ is at least a modification of $\nu_{k,p}(M,m)$ (see Lemma \ref{lem:domain monotonicity});
(2) domain decomposing principle for a pairwise disjoint sequence $\{\Omega_{\alpha}\}^{k}_{\alpha=0}$ of domains,
which is a relation between the first Dirichlet $p$-Poincar\'e constants on the $\Omega_{\alpha}$,
for $\alpha=0,1,\dots,k$,
and the $(k+1)$-th one on $\sqcup^{k}_{\alpha=0} \Omega_{\alpha}$ (see Lemma \ref{lem:higher order to smallest});
(3) boundary concentration inequality for a domain $\Omega$,
which states that
the restricted weighted measure on $\Omega$ exponentially concentrates around its boundary when the first Dirichlet $p$-Poincar\'e constant on $\Omega$ is large (see Lemma \ref{lem:local boundary concentration inequality}).
We notice that
our method is quite different from that of (\ref{eq:CGY}) in \cite{CGY1}.

\subsection{Compact manifolds with boundary}
We next consider the case where
$M$ is a compact manifold with boundary.
In this case,
our main object is the \textit{$k$-th Dirichlet $p$-Poincar\'e constant} defined by
\begin{equation}\label{eq:Dirichlet Poincare constant}
\nu^{D}_{k,p}(M,m):=\inf_{L_{k}} \sup_{\phi \in L_{k}\setminus \{0\}} \frac{\int_{M} \,\Vert \nabla \phi \Vert^{p} \,   dm  }{ \int_{M} \,\vert \phi \vert^{p} \,   dm },
\end{equation}
where the infimum is taken over all $k$-dimensional subspaces $L_{k}$ of the $(1,p)$-Sobolev space $W^{1,p}_{0}(M,m)$.
When $p=2$,
this constant is equal to the $k$-th Dirichlet eigenvalue of the weighted Laplacian.
For $p\in (1,\infty)$
the value $\nu^{D}_{1,p}(M,m)$ coincides with the first Dirichlet eigenvalue of the weighted $p$-Laplacian (see Subsection \ref{sec:Dirichlet Poincare constants}),
and $\nu^{D}_{1,1}(M,m)$ the Dirichlet isoperimetric constant (see Subsection \ref{sec:isoperimetric manifolds with boundary}).

Our method of the proof of Theorem \ref{thm:main result} also works in this case where $\partial M$ is non-empty,
and this yields the following analogue of Theorem \ref{thm:main result}
for the Dirichlet Poincar\'e constant. For any sequence
$\{A_{\alpha}\}_{\alpha=1}^k$ of Borel subsets of $M$ we set
\begin{equation}\label{eq:pre boundary separation distance}
 \mathcal{D}^{\partial}\bigl(\{A_{\alpha}\}\bigl):=\min \{\, \min_{\alpha \neq \beta} d(A_{\alpha},A_{\beta}),\,\, \min_{\alpha} d(A_{\alpha},\partial M)  \,\}.
 \end{equation}

\begin{thm}\label{thm:main result with boundary}
Let $(M,m)$ be a compact weighted Riemannian manifold with boundary.
For any sequence $\{A_{\alpha}\}^{k}_{\alpha=1}$ of Borel subsets of $M$,
\begin{equation}\label{eq:main inequality with boundary}
\nu^{D}_{k,p}(M,m)^{\frac{1}{p}}\leq \frac{2}{   \mathcal{D}^{\partial}(\{A_{\alpha}\})   }\max_{\alpha=1,\dots,k}  \log \frac{e(1-\sum_{\beta\neq \alpha}m(A_{\beta})    )}{m(A_{\alpha})}.
\end{equation}
\end{thm}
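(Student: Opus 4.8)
The plan is to mirror the three-principle strategy announced after Theorem~\ref{thm:main result}, adapting each ingredient to the Dirichlet setting. First I would use the domain monotonicity principle (Lemma~\ref{lem:domain monotonicity}) applied to a suitable open domain $\Omega \subset M$, which should compare $\nu^{D}_{k+1,p}$ on $\Omega$ with $\nu^{D}_{k,p}(M,m)$; the key difference from the closed case is that one does not need to quotient out constants, since $W^{1,p}_0$ already excludes them, so in fact the Dirichlet case is the cleaner one and the monotonicity comparison should hold with $k$ in place of $k+1$. The natural choice of domain will be $\Omega := \{x \in M : d(x, \partial M) > t\}$ for an appropriate $t$, or rather the union of the $r$-neighborhoods of the $A_\alpha$ for $r < \mathcal{D}^{\partial}(\{A_\alpha\})/2$; the definition \eqref{eq:pre boundary separation distance} is designed precisely so that these neighborhoods are pairwise disjoint \emph{and} disjoint from $\partial M$.

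Next I would apply the domain decomposing principle (Lemma~\ref{lem:higher order to smallest}) to the pairwise disjoint family $\Omega_\alpha := \{x : d(x, A_\alpha) < r\}$ for $\alpha = 1, \dots, k$. Because each $\Omega_\alpha$ is a genuine open subset of $\inte M$ with $\overline{\Omega_\alpha} \cap \partial M = \emptyset$, the first Dirichlet $p$-Poincar\'e constant $\nu^D_{1,p}(\Omega_\alpha, m|_{\Omega_\alpha})$ is well-defined and, by the decomposition lemma, $\nu^D_{k,p}$ on the disjoint union $\bigsqcup_\alpha \Omega_\alpha$ is bounded below by $\min_\alpha \nu^D_{1,p}(\Omega_\alpha)$ (after whatever normalization Lemma~\ref{lem:higher order to smallest} carries). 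Combined with the previous step this reduces the whole problem to producing, for each $\alpha$, an \emph{upper} bound on $\nu^D_{1,p}(\Omega_\alpha)$.

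That last bound is exactly what the boundary concentration inequality (Lemma~\ref{lem:local boundary concentration inequality}) supplies, read in the contrapositive: if $\nu^D_{1,p}(\Omega_\alpha)$ were large then $m$ restricted to $\Omega_\alpha$ would concentrate exponentially near $\partial \Omega_\alpha$, at rate governed by $\nu^D_{1,p}(\Omega_\alpha)^{1/p}$. Quantitatively, the measure of the set of points in $\Omega_\alpha$ at distance $> r$ from $\partial\Omega_\alpha$ decays like $e^{-\nu^{1/p} r}$ times the total mass. Since $A_\alpha$ sits at distance $r$ from $\partial\Omega_\alpha$ by construction (as $\Omega_\alpha$ is the $r$-neighborhood of $A_\alpha$), we get $m(A_\alpha) \le e^{-\nu^D_{1,p}(\Omega_\alpha)^{1/p} r} \cdot m(\Omega_\alpha)$, and bounding $m(\Omega_\alpha) \le 1 - \sum_{\beta \neq \alpha} m(A_\beta)$ using disjointness of $\Omega_\alpha$ from the $A_\beta$, then solving for $\nu^D_{1,p}(\Omega_\alpha)^{1/p}$ and letting $r \uparrow \mathcal{D}^{\partial}(\{A_\alpha\})/2$, yields \eqref{eq:main inequality with boundary} after tracking the constant $e$ and the factor $2$. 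The step I expect to demand the most care is the bookkeeping at the interface of the three lemmas: making sure the normalizing constants in Lemmas~\ref{lem:domain monotonicity} and~\ref{lem:higher order to smallest} compose to give exactly the factor $2/\mathcal{D}^{\partial}$ rather than something lossy, and confirming that the "$-1$ shift" in indices that appears in the closed case (passing between the $k$-th constant on $M$ and the $(k+1)$-th on $\Omega$) genuinely disappears here because $W^{1,p}_0$ contains no constants; verifying that subtle point is where the Dirichlet argument differs from, and is slightly simpler than, the proof of Theorem~\ref{thm:main result}.
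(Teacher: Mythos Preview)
Your plan is exactly the paper's: take the open $r$-neighborhoods $\Omega_\alpha$ of the $A_\alpha$ for $r<\mathcal{D}^{\partial}(\{A_\alpha\})/2$, chain domain monotonicity with the decomposition lemma and the boundary concentration inequality (the paper does this via the Dirichlet analogues, Lemmas~\ref{lem:Dirichlet domain monotonicity}, \ref{lem:Dirichlet higher order to smallest}, \ref{lem:Dirichlet local boundary concentration inequality}, rather than the closed-case Lemmas you cite, but you have correctly anticipated the content of each, including the absence of the index shift). One slip to fix: Lemma~\ref{lem:higher order to smallest} gives $\nu^{D}_{k,p}\bigl(\bigsqcup_\alpha\Omega_\alpha,m\bigr)\le \max_\alpha \nu^{D}_{1,p}(\Omega_\alpha,m)$, i.e.\ an \emph{upper} bound by the \emph{max}, not a lower bound by the min as you wrote; your next sentence (``reduces the whole problem to producing, for each $\alpha$, an upper bound on $\nu^{D}_{1,p}(\Omega_\alpha)$'') shows you are in fact using the correct inequality, so this is only a transcription error.
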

The part $\mathcal{D}^{\partial}(\{A_{\alpha}\})$ in the right hand side of (\ref{eq:main inequality with boundary}) concerns boundary concentration phenomena studied in \cite{S}.
Similarly to Theorem \ref{thm:main result},
this estimate (\ref{eq:main inequality with boundary}) is new for $p\neq 2$.
When $k\geq 2$,
Theorem \ref{thm:main result with boundary} is new even in the case of $p=2$ in view of the following two remarks. 

\begin{rem}\label{rem:historical remark}
 Under the same setting as in Theorem \ref{thm:main result with boundary},
the second author \cite{S} has obtained the following weaker estimate (see Lemma 4.1 in \cite{S}):
\begin{equation}\label{eq:weaker main result with boundary}
\nu^{D}_{k,2}(M,m)^{\frac{1}{2}} \leq \frac{2}{ \mathcal{D}^{\partial}(\{A_{\alpha}\}) }\frac{1}{\sqrt{\min_{\alpha=1,\dots,k} m(A_{\alpha})}}.
\end{equation}
After that
the authors \cite{FS} improved (\ref{eq:weaker main result with boundary}),
and proved Theorem \ref{thm:main result with boundary} only when $k=1$ (see Theorem 2.3 in \cite{FS}).
\end{rem}
\begin{rem}\label{rem:compare with CGY}
We now compare Theorem \ref{thm:main result with boundary} for $p=2$ with Theorem 1.1 in \cite{CGY1}.
Chung-Grigor'yan-Yau \cite{CGY1} have obtained an upper estimate of the $k$-th Robin eigenvalues with same proof as (\ref{eq:CGY}).
In our setting,
their estimate holds in the following form:
Under the same setting as in Theorem \ref{thm:main result with boundary},
we have
\begin{align}\label{eq:CGY with boundary}
&\quad \,\,\left(\nu^{D}_{k,2}(M,m)-\nu^{D}_{1,2}(M,m)\right)^{\frac{1}{2}}\\ \notag
&\leq \frac{1}{\mathcal{D}(\{A_{\alpha}\})}\,\max_{\alpha \neq \beta} \,\log\,\frac{4}{\int_{A_{\alpha}} \,\phi^{2}_{1,2;m}\,dm \, \int_{A_{\beta}} \,\phi^{2}_{1,2;m}\,dm   },
\end{align}
where $\phi_{1,2;m}$ denotes an $L^{2}$-normalized eigenfunction of the first Dirichlet eigenvalue of the weighted Laplacian.
The advantage of Theorem \ref{thm:main result with boundary} is that
$\nu^{D}_{1,2}(M,m)$ and the integral quantity of the eigenfunction do not appear in  (\ref{eq:main inequality with boundary}).
We eliminate such values by considering $\mathcal{D}^{\partial}(\{A_{\alpha}\})$ instead of $\mathcal{D}(\{A_{\alpha}\})$.
The form of (\ref{eq:main inequality with boundary}) seems to be different from that of (\ref{eq:CGY with boundary}),
and more close to that of (\ref{eq:CGY}).
\end{rem}

\subsection{Organization}
In Section \ref{sec:Preliminaries},
we introduce the modified Poincar\'e constants and compare them with
the Poincar\'e constants (\ref{eq:Poincare constant}).

In Sections \ref{sec:Poincare constants and concentration}, \ref{sec:Dirichlet Poincare constants and boundary concentration}, \ref{sec:Multiway isoperimetric constants and concentration},
we prove our main results.
In Section \ref{sec:Poincare constants and concentration},
we prove Theorem \ref{thm:main result}.
In Section \ref{sec:Dirichlet Poincare constants and boundary concentration},
we formulate an analogue of Theorem \ref{thm:main result} for Dirichlet eigenvalues of the weighted $p$-Laplacian on compact manifolds with boundary (see Theorem \ref{thm:main result with boundary}).
In Section \ref{sec:Multiway isoperimetric constants and concentration},
we provide upper bounds for multi-way isoperimetric constant and also the multi-way Dirichlet isoperimetric constant (see Theorems \ref{thm:isoperimetric main result} and \ref{thm:Dirichlet isoperimetric main result}).

In Section \ref{sec:Sharpness},
we will discuss the sharpness of our main results.

Sections \ref{sec:Inscribed radius estimates} and \ref{sec:Discrete cases} are devoted to present some byproducts of the study in Sections \ref{sec:Poincare constants and concentration}, \ref{sec:Dirichlet Poincare constants and boundary concentration}, \ref{sec:Multiway isoperimetric constants and concentration}.
In Section \ref{sec:Inscribed radius estimates},
for compact manifolds with boundary of non-negative weighted Ricci curvature,
we give an upper bound of its inscribed radius in terms of the
Dirichlet Poincar\'e constant (\ref{eq:Dirichlet Poincare constant}) by using the domain monotonicity principle (Lemma \ref{lem:Dirichlet domain monotonicity}) and the boundary concentration inequality (Lemma \ref{lem:Dirichlet local boundary concentration inequality}) obtained in Section \ref{sec:Dirichlet Poincare constants and boundary concentration} (see Proposition \ref{prop:inscribed estimate}).
In Section \ref{sec:Discrete cases},
we discuss discrete cases.
We show an analogue of Theorem \ref{thm:main result} for weighted graphs (see Theorem \ref{thm:discrete modified main result}).

\section{Preliminaries}\label{sec:Preliminaries}
Hereafter,
let $(M,m)$ denote a compact weighted Riemannian manifold defined as (\ref{eq:weighted volume measure}).
Let $k\geq 1$ and $p\in [1,\infty)$.

\subsection{Modified Poincar\'e constants}\label{sec:Properties of Poincare constants}
Let $M$ be closed.
We define the \textit{modified $k$-th $p$-Poincar\'e constant} as
\begin{equation}\label{eq:modified Poincare constant}
\widehat{\nu}_{k,p}(M,m):=\inf_{L_{k+1}} \sup_{\phi \in L_{k+1}\setminus \{0\}} \frac{\int_{M} \,\Vert \nabla \phi \Vert^{p} \,   dm  }{ \int_{M} \,\vert \phi \vert^{p} \,   dm },
\end{equation}
where the infimum is taken over all $(k+1)$-dimensional subspaces $L_{k+1}$ of $W^{1,p}(M,m)$.
In stead of working with $\nu_{k,p}(M,m)$ we will work with
$\widehat{\nu}_{k,p}(M,m)$. Let us study the relation between the Poincar\'e constant $\nu_{k,p}(M,m)$ defined as (\ref{eq:Poincare constant}),
and the modified one $\widehat{\nu}_{k,p}(M,m)$.
In the case of $p=2$ it is known and easy to show that
$\nu_{k,2}(M,m)=\widehat{\nu}_{k,2}(M,m)$,
and they are also equal to the $k$-th non-trivial eigenvalue of the weighted Laplacian (see (\ref{eq:closed min max principle}) below).
We first recall the following elementary inequality (cf. Lemma 2.1 in \cite{Mi1}).
For the completeness of this paper,
we give its proof.
\begin{lem}\label{lem:Milman lemma}
For $\phi \in W^{1,p}(M,m)$,
we have
\begin{equation*}
\inf_{c\in \mathbb{R}}\Vert \phi-c \Vert_{L^{p}} \leq \left\Vert \phi-\int_{M}\,\phi\, dm \right \Vert_{L^{p}}\leq 2 \inf_{c\in \mathbb{R}}\Vert \phi-c \Vert_{L^{p}},
\end{equation*}
where $\Vert \cdot \Vert_{L^{p}}$ is the $L^{p}$-norm on $M$ with
 respect to the measure $m$.
\end{lem}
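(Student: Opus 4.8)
The plan is to note that the left-hand inequality is a triviality and that the right-hand inequality is the triangle inequality, exploiting twice that $m$ is a probability measure.

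First, for the lower bound, it suffices to use $c=\int_{M}\phi\,dm$ as a competitor in the infimum on the left, which gives $\inf_{c\in\mathbb{R}}\Vert\phi-c\Vert_{L^{p}}\leq\Vert\phi-\int_{M}\phi\,dm\Vert_{L^{p}}$ at once.

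For the upper bound, I would fix an arbitrary $c\in\mathbb{R}$ and decompose $\phi-\int_{M}\phi\,dm=(\phi-c)+\bigl(c-\int_{M}\phi\,dm\bigr)$. Since $m(M)=1$, the constant function $c-\int_{M}\phi\,dm$ has $L^{p}$-norm equal to its absolute value, and
\[
\left|c-\int_{M}\phi\,dm\right|=\left|\int_{M}(c-\phi)\,dm\right|\leq\int_{M}|\phi-c|\,dm=\Vert\phi-c\Vert_{L^{1}}\leq\Vert\phi-c\Vert_{L^{p}},
\]
where the last inequality is Jensen's inequality for the convex function $t\mapsto t^{p}$ against the probability measure $m$. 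Combining this with the triangle inequality in $L^{p}$ yields
\[
\left\Vert\phi-\int_{M}\phi\,dm\right\Vert_{L^{p}}\leq\Vert\phi-c\Vert_{L^{p}}+\left|c-\int_{M}\phi\,dm\right|\leq2\,\Vert\phi-c\Vert_{L^{p}},
\]
and taking the infimum over $c\in\mathbb{R}$ completes the argument.

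There is no genuine obstacle here; the only points requiring a little care are that $m(M)=1$ is used both to identify the $L^{p}$-norm of a constant with its modulus and to pass from the $L^{1}$-norm to the $L^{p}$-norm, and that the bound is uniform in $c$ so that the infimum can be taken at the end.
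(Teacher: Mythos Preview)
Your proof is correct and is essentially identical to the paper's own argument: the paper also obtains the upper bound by writing $\phi-\int_M\phi\,dm=(\phi-c)+(c-\int_M\phi\,dm)$, applying the triangle inequality, and then using $|c-\int_M\phi\,dm|\leq\Vert\phi-c\Vert_{L^1}\leq\Vert\phi-c\Vert_{L^p}$ (since $m$ is a probability measure), while the lower bound is left implicit as a triviality.
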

\begin{proof}
Given any $c\in \mathbb{R}$ we get
\begin{align*}
\left\Vert \phi-\int_{M}\,\phi\, dm \right \Vert_{L^{p}}&\leq \left \Vert \phi-c \right \Vert_{L^{p}}+\left\Vert \int_{M}\,\phi\, dm- c\right\Vert_{L^{p}}\\
                                                                               &  =   \left \Vert \phi-c \right \Vert_{L^{p}} +\left\vert \int_{M}\,\phi\, dm- c \right\vert\\
                                                                               & \leq \left\Vert \phi-c \right \Vert_{L^{p}}+\left\Vert \phi-c \right \Vert_{L^{1}}\leq 2 \left\Vert \phi-c \right \Vert_{L^{p}}.
\end{align*}
This proves the lemma.
\end{proof}

Lemma \ref{lem:Milman lemma} leads us to the following:
\begin{prop}\label{prop:equivalence between Poincare constant and modified Poincare constant}
\begin{equation*}
\nu_{k,p}(M,m) \leq \widehat{\nu}_{k,p}(M,m)\leq 2^{p}\, \nu_{k,p}(M,m).
\end{equation*}
\end{prop}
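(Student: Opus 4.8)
The plan is to deduce both inequalities from Lemma \ref{lem:Milman lemma} by comparing the two Rayleigh-type quotients appearing in \eqref{eq:Poincare constant} and \eqref{eq:modified Poincare constant}, and by matching up the admissible subspaces. The key observation is that the $(k+1)$-dimensional subspaces $L_{k+1}$ of $W^{1,p}(M,m)$ that compete in the definition of $\widehat\nu_{k,p}(M,m)$ are related to the $k$-dimensional subspaces $L_k$ with $L_k\cap\mathcal{C}=\{0\}$ competing in the definition of $\nu_{k,p}(M,m)$ by adjoining the constants: given such an $L_k$, the space $L_{k+1}:=L_k\oplus\mathcal{C}$ is $(k+1)$-dimensional, and conversely every $(k+1)$-dimensional $L_{k+1}$ either contains $\mathcal{C}$, in which case it is of this form, or meets $\mathcal{C}$ trivially.

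For the left inequality $\nu_{k,p}\le\widehat\nu_{k,p}$: start with an arbitrary $(k+1)$-dimensional $L_{k+1}\subseteq W^{1,p}(M,m)$ realizing (up to $\varepsilon$) the infimum in \eqref{eq:modified Poincare constant}. First I would handle the generic case $\mathcal{C}\subseteq L_{k+1}$, writing $L_{k+1}=L_k\oplus\mathcal{C}$ with $L_k\cap\mathcal{C}=\{0\}$, so $L_k$ is admissible for $\nu_{k,p}$. For $\phi\in L_k\setminus\{0\}$, replacing $\phi$ by $\phi-\int_M\phi\,dm$ does not change $\int_M\|\nabla\phi\|^p\,dm$ but, since $\phi-\int_M\phi\,dm\in L_{k+1}$, the modified Rayleigh quotient of that element is at most $\widehat\nu_{k,p}+\varepsilon$; this shows $\sup_{\phi\in L_k\setminus\{0\}}\!\big(\int\|\nabla\phi\|^p/\!\int|\phi-\int\phi\,dm|^p\big)\le\widehat\nu_{k,p}+\varepsilon$, whence $\nu_{k,p}\le\widehat\nu_{k,p}+\varepsilon$. (If instead $\mathcal{C}\cap L_{k+1}=\{0\}$ one can directly use $L_{k+1}$ itself, or a suitable $k$-dimensional subspace, as a competitor, noting $\inf_c\|\phi-c\|_{L^p}\le\|\phi\|_{L^p}$; this only makes the bound easier.) Letting $\varepsilon\to0$ gives the claim.

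For the right inequality $\widehat\nu_{k,p}\le 2^p\nu_{k,p}$: take a $k$-dimensional $L_k$ with $L_k\cap\mathcal{C}=\{0\}$ nearly realizing the infimum in \eqref{eq:Poincare constant}, and set $L_{k+1}:=L_k\oplus\mathcal{C}$, an admissible competitor for $\widehat\nu_{k,p}$. A general element of $L_{k+1}$ is $\phi+c$ with $\phi\in L_k$, $c\in\mathbb{R}$; then $\|\nabla(\phi+c)\|=\|\nabla\phi\|$ and, choosing $c=-\int_M\phi\,dm$ to minimize, Lemma \ref{lem:Milman lemma} gives $\inf_c\|\phi-(-c)\|_{L^p}\le\|\phi-\int_M\phi\,dm\|_{L^p}$, so in fact $\inf_{c}\|\phi+c\|_{L^p}\le\|\phi-\int_M\phi\,dm\|_{L^p}\le 2\inf_{c}\|\phi+c\|_{L^p}$. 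Hence for any $\phi+c\in L_{k+1}\setminus\{0\}$,
\begin{equation*}
\frac{\int_M\|\nabla(\phi+c)\|^p\,dm}{\int_M|\phi+c|^p\,dm}
\le\frac{\int_M\|\nabla\phi\|^p\,dm}{\inf_{c'}\|\phi+c'\|_{L^p}^p}
\le 2^p\,\frac{\int_M\|\nabla\phi\|^p\,dm}{\|\phi-\int_M\phi\,dm\|_{L^p}^p}
\le 2^p(\nu_{k,p}+\varepsilon),
\end{equation*}
the last step using that $\phi\ne0$ when $\phi+c\ne0$ unless $\phi=0$ (the case $\phi=0$, $c\ne0$ being impossible in a ratio since $\nabla c=0$ forces the numerator to vanish, so such elements are excluded from the supremum, or alternatively one restricts attention to $\phi\neq 0$). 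Taking the supremum over $L_{k+1}$ and then $\varepsilon\to0$ yields $\widehat\nu_{k,p}\le 2^p\nu_{k,p}$.

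The only genuinely delicate point is bookkeeping around the constants: ensuring that the subspaces constructed really have the correct dimension ($L_k\oplus\mathcal{C}$ is $(k+1)$-dimensional precisely because $L_k\cap\mathcal{C}=\{0\}$) and that the degenerate elements where $\phi=0$ but $c\ne0$ do not spoil the supremum (they have zero numerator, so the ratio is $0$ and they are harmless). Everything else is a direct application of Lemma \ref{lem:Milman lemma} together with the fact that adding a constant to $\phi$ leaves $\nabla\phi$ unchanged.
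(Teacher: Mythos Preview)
Your proof of the right-hand inequality $\widehat\nu_{k,p}\le 2^p\nu_{k,p}$ is correct and coincides with the paper's: form $L_{k+1}=L_k\oplus\mathcal C$ and apply Lemma~\ref{lem:Milman lemma}.

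For the left-hand inequality there is a real gap in the case $\mathcal C\cap L_{k+1}=\{0\}$. Your parenthetical claim that one can then use ``$L_{k+1}$ itself, or a suitable $k$-dimensional subspace'' together with the observation $\inf_c\|\phi-c\|_{L^p}\le\|\phi\|_{L^p}$ does not go through: for $\phi$ in an arbitrary $k$-dimensional $L_k\subset L_{k+1}$ you would need $\|\phi-\int_M\phi\,dm\|_{L^p}\ge\|\phi\|_{L^p}$ in order to bound the $\nu_{k,p}$-quotient by the $\widehat\nu_{k,p}$-quotient, and that inequality is false in general (take $\phi$ close to a nonzero constant). The underlying problem is that when $\mathcal C\not\subseteq L_{k+1}$, the element $\phi-\int_M\phi\,dm$ need not lie in $L_{k+1}$, so the device from your first case is unavailable. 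Incidentally, $\mathcal C\subseteq L_{k+1}$ is not the ``generic'' case; for a generic $(k+1)$-dimensional subspace of $W^{1,p}$ one expects $\mathcal C\cap L_{k+1}=\{0\}$.

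The paper avoids the case split by passing to the mean-zero slice: given any $(k+1)$-dimensional $L_{k+1}$, set
\[
\overline L_{k+1}:=\Bigl\{\phi\in L_{k+1}:\int_M\phi\,dm=0\Bigr\},
\]
which is the kernel of a linear functional on $L_{k+1}$, hence has dimension at least $k$, and meets $\mathcal C$ trivially. Any $k$-dimensional $\overline L_k\subset\overline L_{k+1}$ is admissible for $\nu_{k,p}$, and for $\phi\in\overline L_k$ the two Rayleigh quotients coincide because $\phi-\int_M\phi\,dm=\phi$; the comparison is then immediate. Your argument in the case $\mathcal C\subseteq L_{k+1}$ is this same idea in disguise (there $\phi-\int_M\phi\,dm$ lands back in $L_{k+1}$ automatically), but the mean-zero reduction is what makes the other case work as well.
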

\begin{proof}
We first prove $\nu_{k,p}(M,m) \leq \widehat{\nu}_{k,p}(M,m)$.
Fix a $(k+1)$-dimensional subspace $L_{k+1}$ of $W^{1,p}(M,m)$,
and also define a subspace
\begin{equation*}
\overline{L}_{k+1}:=\left\{\phi \in L_{k+1}  \, \middle|\, \int_{M}\,\phi\,dm=0  \right\}.
\end{equation*}
Note that
the dimension of $\overline{L}_{k+1}$ is at least $k$,
and $\overline{L}_{k+1}\cap \mathcal{C}=\{0\}$ for the set $\mathcal{C}$ of all constant functions on $M$.
We now take a $k$-dimensional subspace $\overline{L}_{k}\subset \overline{L}_{k+1}$ of $W^{1,p}(M,m)$.
Then it holds that
\begin{align*}
\nu_{k,p}(M,m) &\leq \sup_{\phi \in \overline{L}_{k}\setminus \{0\}} \frac{\int_{M} \,\Vert \nabla \phi \Vert^{p} \,   dm  }{ \int_{M} \,\vert \phi-\int_{M}\,\phi\,dm \vert^{p} \,   dm }\\
                                          & =    \sup_{\phi \in \overline{L}_{k}\setminus \{0\}} \frac{\int_{M} \,\Vert \nabla \phi \Vert^{p} \,   dm  }{ \int_{M} \,\vert \phi \vert^{p} \,   dm }
                                            \leq \sup_{\phi \in L_{k+1}\setminus \{0\}} \frac{\int_{M} \,\Vert \nabla \phi \Vert^{p} \,   dm  }{ \int_{M} \,\vert \phi \vert^{p} \,   dm }.
\end{align*}
This implies the desired inequality.

We next prove $\widehat{\nu}_{k,p}(M,m)\leq 2^{p}\, \nu_{k,p}(M,m)$.
Let us fix a $k$-dimensional subspace $L_{k}$ of $W^{1,p}(M,m)$ with $L_{k}\cap \mathcal{C}=\{0\}$.
Lemma \ref{lem:Milman lemma} yields that
for every $\phi \in L_{k}$ we have
\begin{equation*}
\int_{M} \, \left \vert \phi-\int_{M}\,\phi\,dm \right\vert^{p} dm \leq 2^{p}\, \inf_{c\in \mathbb{R}} \int_{M}  \left \vert \phi-c  \right\vert^{p} dm.
\end{equation*}
Define a $(k+1)$-dimensional subspace $\widetilde{L}_{k+1}:=L_{k}\oplus \mathcal{C}$.
Then
\begin{align*}
\widehat{\nu}_{k,p}(M,m)&\leq \sup_{\phi \in \widetilde{L}_{k+1}\setminus \{0\}} \frac{\int_{M} \,\Vert \nabla \phi \Vert^{p}   dm  }{ \int_{M} \,\vert \phi \vert^{p} \,   dm }
                         \leq   \sup_{\phi \in L_{k}\setminus \{0\}} \frac{\int_{M} \,\Vert \nabla \phi \Vert^{p}   dm  }{\inf_{c\in \mathbb{R}} \int_{M} \,\vert \phi-c \vert^{p} \,   dm }\\
                        &\leq 2^{p} \sup_{\phi \in L_{k}\setminus \{0\}} \frac{\int_{M} \,\Vert \nabla \phi \Vert^{p}  dm  }{\int_{M} \, \left \vert \phi-\int_{M}\,\phi\,dm \right\vert^{p} dm }.
\end{align*}
We complete the proof.
\end{proof}

We further review the relation between $\nu_{k,p}(M,m),\,\widehat{\nu}_{k,p}(M,m)$,
and the spectrum of the \textit{weighted Laplacian}
\begin{equation*}
\Delta_{m}:=\Delta_{g}+g(\nabla f,\nabla \cdot),
\end{equation*}
where $\Delta_{g}$ is the Laplacian defined as the minus of the trace of Hessian.
We denote by
\begin{equation*}
0=\lambda_{0}(M,m)<\lambda_{1}(M,m) \leq \dots \leq \lambda_{k}(M,m)\leq \dots \nearrow +\infty
\end{equation*}
the all eigenvalues of $\Delta_{m}$,
counting multiplicity.
The min-max principle tells us that
\begin{equation}\label{eq:closed min max principle}
\lambda_{k}(M,m)=\nu_{k,2}(M,m)=\widehat{\nu}_{k,2}(M,m).
\end{equation}
For $p\in (1,\infty)$,
the \textit{weighted $p$-Laplacian $\Delta_{m,p}$} is defined as
\begin{equation*}
\Delta_{m,p}:=-e^{f}\,\Div_{g} \,\left(e^{-f} \Vert \nabla \cdot \Vert^{p-2}\, \nabla \cdot \right),
\end{equation*}
where $\Div_{g}$ is the divergence operator induced from $g$.
Here $\Delta_{m,2}=\Delta_{m}$.
Note that
$\Delta_{m,p}$ is non-linear in the case of $p\neq 2$ (for its spectral theory, see e.g., \cite{PAO} and the references therein).
Let $\lambda_{1,p}(M,m)$ stand for the smallest positive eigenvalue of $\Delta_{m,p}$,
which is known to be variationally characterized as follows (see e.g., Corollary 2.11 in \cite{H}):
\begin{equation*}
\lambda_{1,p}(M,m)=\inf_{\phi \in W^{1,p}(M,m)\setminus \mathcal{C}}   \frac{\int_{M} \,\Vert \nabla \phi \Vert^{p} \,   dm  }{ \inf_{c\in \mathbb{R}} \int_{M} \,\vert \phi-c \vert^{p} \,   dm }
\end{equation*}
for the set $\mathcal{C}$ of all constant functions on $M$.
In virtue of Lemma \ref{lem:Milman lemma} and Proposition \ref{prop:equivalence between Poincare constant and modified Poincare constant},
we see
\begin{equation*}
\lambda_{1,p}(M,m)^{\frac{1}{p}}\simeq \nu_{1,p}(M,m)^{\frac{1}{p}}\simeq \widehat{\nu}_{1,p}(M,m)^{\frac{1}{p}}.
\end{equation*}
Here $C_{1} \simeq C_{2}$ means that
$C_{1}$ and $C_{2}$ are equivalent up to universal explicit constants.
\begin{rem}\label{rem:eigenvalue of plaplacian}
Let us compare $\widehat{\nu}_{k,p}(M,m)$ with an eigenvalue of the $p$-Laplacian
 in the case of $p\neq 2$. In this case, a similar min-max principle to (\ref{eq:closed min max principle}) does not work in the higher-order case of $k\geq 2$.
We refer to \cite{PAO} for the summary of the higher-order eigenvalues of the weighted $p$-Laplacian.
We here recall some construction of the higher-order eigenvalues. Let $\mathcal{B}_{p}$ stand for the class of closed symmetric subsets of $\{\phi\in W^{1,p}(M,m) \mid \Vert \phi \Vert_{L^{p}}=1 \}$.
For $B \in \mathcal{B}_{p}$,
its \textit{genus} $\gamma^{+}(B)$ is defined as the supremum of $l\geq 1$ such that
there is an odd continuous surjective map from the $l$-dimensional unit sphere $\mathbb{S}^{l}$ in $\mathbb{R}^{l+1}$ to $B$.
Its \emph{cogenus} or \emph{Krasnosel'skii genus} $\gamma^{-}(B)$
 \cite{Kr} is also defined as the infimum of $l$ such that
there is an odd continuous map from $B$ to $\mathbb{S}^{l}$.
We now define
\begin{equation*}
\lambda^{\pm}_{k,p}(M,m):=\inf_{B\in \mathcal{B}^{\pm}_{k+1,p}} \sup_{\phi \in B} \int_{M}\Vert \nabla \phi \Vert^{p}dm,
\end{equation*}
where $\mathcal{B}^{\pm}_{k+1,p}:=\{B \in \mathcal{B}_{p} \mid \gamma^{\pm}(B)\geq k+1\}$.
Then the two sequences $\{\lambda^{\pm}_{k,p}(M,m)\}_{k}$ are known to
 be increasing and unbounded sequences of eigenvalues of the weighted
 $p$-Laplacian and it is known that
 $\lambda_{1,p}(M,m)$ coincides with $\lambda_{1,p}^{+}(M,m)$ (\cite{DR}). $\lambda_{k,p}^{+}(M,m)$ was introduced by Dr\'abek-Robinson
 \cite{DR}.
 For any $(k+1)$-dimensional subspace $L_{k+1}$ of $W^{1,p}(M,m)$,
 it holds that $\gamma^{\pm}(L_{k+1}\cap \{ \|\phi \|_{L^p}=1\})= k+1$ (see e.g., Proposition 5.2 in \cite{S} and Section 3 in \cite{AHP}),
 and hence
 \begin{align}\label{gamma1}
  \lambda^{\pm}_{k,p}(M,m)\leq \widehat{\nu}_{k,p}(M,m).
 \end{align}
 In particular, upper bounds for $\widehat{\nu}_{k,p}(M,m)$
 implies that for $\lambda^{\pm}_{k,p}(M,m)$.
 \end{rem}
 
\subsection{Dirichlet Poincar\'e constants}\label{sec:Dirichlet Poincare constants}
Let $\bm$ be non-empty.
We next investigate the \textit{$k$-th Dirichlet $p$-Poincar\'e constant} defined as (\ref{eq:Dirichlet Poincare constant}).
Let
\begin{equation*}
0< \lambda^{D}_{1}(M,m)<\lambda^{D}_{2}(M,m) \leq \dots \leq \lambda^{D}_{k}(M,m)\leq \dots \nearrow +\infty
\end{equation*}
stand for the all Dirichlet eigenvalues of the weighted Laplacian $\Delta_{m}$,
counting multiplicity.
The min-max principle states
\begin{equation*}
\lambda^{D}_{k}(M,m)=\nu^{D}_{k,2}(M,m).
\end{equation*}
For $p\in (1,\infty)$,
let $\lambda^{D}_{1,p}(M,m)$ denote the smallest Dirichlet eigenvalue of the weighted $p$-Laplacian $\Delta_{m,p}$.
It is well-known that
$\lambda^{D}_{1,p}(M,m)$ is variationally characterized as
\begin{equation*}
\lambda^{D}_{1,p}(M,m)= \nu^{D}_{1,p}(M,m).
\end{equation*}

We now present the boundary concentration inequality,
which is a key ingredient of the proof of our main results.
For $A\subset M$,
let $B_{r}(A)$ denote its closed $r$-neighborhood in $M$.
\begin{prop}[\cite{FS}]\label{prop:boundary concentration inequality}
For every $r>0$ we have
\begin{equation}\label{eq:boundary concentration inequality}
m(M\setminus B_{r}(\bm)) \leq \exp \left(1-\nu^{D}_{1,p}(M,m)^{\frac{1}{p}}\,r \right).
\end{equation}
\end{prop}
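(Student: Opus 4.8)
The plan is to extract information about the distribution of $m$ relative to $\partial M$ from the variational characterization $\lambda^{D}_{1,p}(M,m)=\nu^{D}_{1,p}(M,m)$ by inserting a carefully chosen test function built from the distance function $\rho(x):=d(x,\partial M)$. First I would fix $r>0$ and consider, for a parameter $s>0$ to be optimized, the Lipschitz function
\begin{equation*}
\phi(x):=\left(e^{s\,\min\{\rho(x),r\}}-1\right)_{+}=e^{s\,\min\{\rho(x),r\}}-1,
\end{equation*}
which vanishes on $\partial M$ and is therefore an admissible element of $W^{1,p}_{0}(M,m)$ (after the usual approximation; $\rho$ is Lipschitz with $\Vert\nabla\rho\Vert\le 1$ a.e.). Since $\nu^{D}_{1,p}(M,m)$ is the infimum of the Rayleigh quotient $\int_{M}\Vert\nabla\phi\Vert^{p}\,dm\big/\int_{M}|\phi|^{p}\,dm$ over the one-dimensional subspaces spanned by such functions, plugging in this $\phi$ gives
\begin{equation*}
\nu^{D}_{1,p}(M,m)\,\int_{M}|\phi|^{p}\,dm\leq \int_{M}\Vert\nabla\phi\Vert^{p}\,dm\leq s^{p}\int_{M}e^{ps\,\min\{\rho,r\}}\,dm,
\end{equation*}
using $\Vert\nabla\phi\Vert=s\,e^{s\min\{\rho,r\}}\Vert\nabla\rho\Vert\le s\,e^{s\min\{\rho,r\}}$ where $\rho<r$ and $\Vert\nabla\phi\Vert=0$ where $\rho\ge r$.

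Next I would bound the two integrals by comparing them on the level sets of $\rho$. Write $a:=m(M\setminus B_{r}(\partial M))=m(\{\rho>r\})$. On that set $\phi=e^{sr}-1$, so $\int_{M}|\phi|^{p}\,dm\ge a\,(e^{sr}-1)^{p}$; and crudely $\int_{M}e^{ps\min\{\rho,r\}}\,dm\le e^{psr}$ since $m$ is a probability measure. Combining,
\begin{equation*}
\nu^{D}_{1,p}(M,m)\,a\,(e^{sr}-1)^{p}\leq s^{p}\,e^{psr},
\end{equation*}
hence
\begin{equation*}
\nu^{D}_{1,p}(M,m)^{\frac{1}{p}}\leq \frac{s\,e^{sr}}{(e^{sr}-1)}\,a^{-\frac1p}=\frac{s}{1-e^{-sr}}\,a^{-\frac1p}.
\end{equation*}
Now I would optimize over $s>0$. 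Letting $s\to\infty$ kills the factor $1-e^{-sr}\to 1$ but blows up $s$; letting $s\to 0$ does the opposite. The right scaling is $s\asymp 1/r$: choosing $sr$ equal to a fixed constant $t$ gives $\nu^{D}_{1,p}(M,m)^{1/p}\le \frac{t}{r(1-e^{-t})}\,a^{-1/p}$, and one checks the cleanest outcome comes from absorbing the constant into the statement's form. To land exactly on $a\le \exp(1-\nu^{D}_{1,p}(M,m)^{1/p}r)$, I would instead not truncate the exponent at all inside the logarithm: take the inequality $\nu^{D}_{1,p}{}^{1/p}r\le \frac{sr}{1-e^{-sr}}a^{-1/p}$, set $u=sr$, and use that $\inf_{u>0}\frac{u}{1-e^{-u}}\,a^{-1/p}$ is attained in a way that, after rearranging, yields $-\log a\ge \nu^{D}_{1,p}{}^{1/p}r-1$; solving for $a$ gives precisely (\ref{eq:boundary concentration inequality}). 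Concretely, the identity $\frac{u}{1-e^{-u}}\le e^{u/p}\cdot(\text{something})$ is what forces the "$+1$" in the exponent, and I would verify the elementary inequality $u\le (1-e^{-u})\,e^{u}$ type bound that produces it.

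The main obstacle is purely technical rather than conceptual: making the test function $\phi$ genuinely admissible. The distance function $\rho$ to the boundary is only Lipschitz, not smooth, so $\phi\in W^{1,p}_{0}(M,m)$ needs the standard facts that $\rho$ is differentiable a.e. with $\Vert\nabla\rho\Vert\le 1$ and that truncation/composition with a Lipschitz function preserves $W^{1,p}$ membership and boundary vanishing; alternatively one smooths $\rho$ and passes to the limit, which is routine but must be stated. A secondary subtlety is the sharp optimization over $s$: one wants the argument organized so that the constant "$e$" (equivalently the "$+1$" in the exponent) emerges cleanly, which is why the inequality should be set up as a bound on $-\log m(M\setminus B_{r}(\partial M))$ and then exponentiated, rather than optimized numerically. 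Since this proposition is attributed to \cite{FS}, I would expect the authors' proof to follow essentially this line, and I would simply reference it.
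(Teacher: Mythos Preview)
Your proposal has a genuine gap: the exponential test function combined with the crude estimates you state does not yield exponential decay in $r$. From your inequality $\nu^{D}_{1,p}(M,m)\,a\,(e^{sr}-1)^{p}\leq s^{p}\,e^{psr}$, setting $u = sr$ gives
\begin{equation*}
a \leq \left(\frac{u}{\nu^{D}_{1,p}(M,m)^{1/p}\, r\,(1-e^{-u})}\right)^{p}.
\end{equation*}
The function $u \mapsto u/(1-e^{-u})$ is increasing on $(0,\infty)$ with infimum $1$ as $u \to 0^{+}$, so optimizing over $u$ yields only the polynomial bound $a \leq (\nu^{D}_{1,p}(M,m)^{1/p}r)^{-p}$. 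The ``elementary inequality'' you allude to cannot convert this into $a \leq \exp(1-\nu^{D}_{1,p}(M,m)^{1/p}r)$: once $\nu^{D}_{1,p}(M,m)^{1/p}r$ is large the exponential is strictly smaller than any such polynomial. More concretely, your inequality controls $a^{-1/p}$, whereas the target statement controls $\log(1/a)$; no choice of the single parameter $s$ bridges these, and your claimed rearrangement ``$-\log a\ge \nu^{D}_{1,p}{}^{1/p}r-1$'' simply does not follow.

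The paper's proof takes a different route. It uses the piecewise-linear test function $\varphi(x)=\min\{\epsilon^{-1}d(x,B_{r}(\partial M)),\,1\}$ to obtain the one-step inequality
\begin{equation*}
(1+\epsilon^{p}\,\nu^{D}_{1,p}(M,m))\,m(M\setminus B_{r+\epsilon}(\partial M))\leq m(M\setminus B_{r}(\partial M))
\end{equation*}
for all $\epsilon,r>0$, and then iterates this with step size $\epsilon_{0}=\nu^{D}_{1,p}(M,m)^{-1/p}$ (so each step multiplies the mass by at most $1/2$). The exponential decay and the factor $e$ in the exponent come from this iteration, not from a single application of the Rayleigh quotient. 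To repair your argument you would have to abandon the crude bound $\int_M e^{ps\min\{\rho,r\}}\,dm\le e^{psr}$ and instead track how the distribution of $\rho$ enters both integrals---which effectively reproduces the iterative inequality above.
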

\begin{proof}
The inequality (\ref{eq:boundary concentration inequality}) has been
 obtained in \cite{FS} in the unweighted case of $f=\log v_{g}(M)$ and $p=2$ (see Proposition 2.1 in \cite{FS}).
It can be proved by the same argument as in \cite{FS}.
We only outline the proof. 

We begin with showing that
\begin{equation}\label{eq:relative volume comparison and eigenvalue}
(1+\epsilon^{p}\,\nu^{D}_{1,p}(M,m))\,m(M\setminus B_{r+\epsilon}(\bm)) \leq m(M\setminus B_{r}(\bm))
\end{equation}
for all $\epsilon,r>0$.
Set $\Omega_{1}:=B_{r}(\bm),\,\Omega_{2}:=M\setminus B_{r+\epsilon}(\bm)$,
and $v_{\alpha}:=m(\Omega_{\alpha})$ for any $\alpha \in \{1,2\}$.
Define a Lipschitz function $\varphi:M\to \mathbb{R}$ by
\begin{equation*}
\varphi(x):=\min \left\{ \frac{1}{\epsilon} \,\,d(x,\Omega_{1}),\,\,1  \right\}.
\end{equation*}
It holds that
\begin{equation*}
\int_{M}\,\vert \varphi \vert^{p}\,dm \geq v_{2},\quad \int_{M}\,\Vert \nabla \varphi \Vert^{p}\,dm \leq \frac{1}{\epsilon^{p}}\,(1-v_{1}-v_{2}),
\end{equation*}
and hence
\begin{equation*}
\nu^{D}_{1,p}(M,m)\leq \frac{\int_{M}\, \Vert \nabla \varphi \Vert^{p}\,dm}{\int_{M}\, \vert \varphi \vert^{p}\,dm} \leq \frac{1}{\epsilon^{p}\,v_{2}}\,(1-v_{1}-v_{2}).
\end{equation*}
This proves (\ref{eq:relative volume comparison and eigenvalue}).

One can derive (\ref{eq:boundary concentration inequality}) from (\ref{eq:relative volume comparison and eigenvalue}).
For $\epsilon_{0}:=\nu^{D}_{1,p}(M,m)^{-\frac{1}{p}}$,
it suffices to consider the case where $r \in (0,\epsilon_{0})$.
Let $l$ be the integer determined by $\epsilon_{0}\,r^{-1} \in [(l+1)^{-1},l^{-1})$.
Applying (\ref{eq:relative volume comparison and eigenvalue}) to $m(M\setminus B_{l\,\epsilon_{0}}(\bm))$ iteratively,
we can conclude (\ref{eq:boundary concentration inequality}).
\end{proof}

\section{Upper bounds for Poincar\'e constants}\label{sec:Poincare constants and concentration}
The present section is devoted to the proof of Theorem \ref{thm:main result}.
Throughout this section,
we always assume that
$M$ is closed.

\subsection{Key principles}
In this subsection,
we prepare three key principles for the proof of Theorem \ref{thm:main result}.
For a domain $\Omega \subset M$,
we introduce the following local version of the Dirichlet Poincar\'e constant:
\begin{equation}\label{eq:local Dirichlet Poincare constant}
\nu^{D}_{k,p}(\Omega,m):=\inf_{L_{k}} \sup_{\phi \in L_{k}\setminus \{0\}} \frac{\int_{\Omega} \,\Vert \nabla \phi \Vert^{p} \,   dm  }{ \int_{\Omega} \,\vert \phi \vert^{p} \,   dm },
\end{equation}
where the infimum is taken over all $k$-dimensional subspaces $L_{k}$ of the $(1,p)$-Sobolev space $W^{1,p}_{0}(\Omega,m)$.
This is same as (\ref{eq:Dirichlet Poincare constant}) except for the difference of domain.

The following domain monotonicity principle follows from the above definition:
\begin{lem}\label{lem:domain monotonicity}
For a domain $\Omega \subset M$,
\begin{equation*}
\widehat{\nu}_{k,p}(M,m) \leq \nu^{D}_{k+1,p}(\Omega,m),
\end{equation*}
where $\widehat{\nu}_{k,p}(M,m)$ is defined as $(\ref{eq:modified Poincare constant})$.
\end{lem}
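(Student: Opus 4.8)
The plan is to compare the two variational quantities directly by exhibiting, for each admissible competitor in the definition of $\widehat{\nu}_{k,p}(M,m)$, an at-least-as-good competitor coming from the definition of $\nu^{D}_{k+1,p}(\Omega,m)$. Concretely, let $L_{k+1}$ be any $(k+1)$-dimensional subspace of $W^{1,p}_{0}(\Omega,m)$. The point is that functions in $W^{1,p}_{0}(\Omega,m)$ extend by zero to all of $M$, and this extension lands in $W^{1,p}(M,m)$; moreover the extension is linear and injective, so the image $\widetilde{L}_{k+1}$ of $L_{k+1}$ under zero-extension is again a $(k+1)$-dimensional subspace of $W^{1,p}(M,m)$.

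First I would record the (standard, but worth stating) fact that for $\phi\in W^{1,p}_{0}(\Omega,m)$ with zero-extension $\widetilde{\phi}$ one has $\int_{M}\|\nabla\widetilde{\phi}\|^{p}\,dm=\int_{\Omega}\|\nabla\phi\|^{p}\,dm$ and $\int_{M}|\widetilde{\phi}|^{p}\,dm=\int_{\Omega}|\phi|^{p}\,dm$, since $\widetilde{\phi}$ and its weak gradient vanish $m$-a.e. on $M\setminus\Omega$. Consequently the Rayleigh quotients match exactly:
\begin{equation*}
\sup_{\phi\in L_{k+1}\setminus\{0\}}\frac{\int_{\Omega}\|\nabla\phi\|^{p}\,dm}{\int_{\Omega}|\phi|^{p}\,dm}=\sup_{\psi\in\widetilde{L}_{k+1}\setminus\{0\}}\frac{\int_{M}\|\nabla\psi\|^{p}\,dm}{\int_{M}|\psi|^{p}\,dm}\geq\widehat{\nu}_{k,p}(M,m),
\end{equation*}
where the last inequality is just the definition of $\widehat{\nu}_{k,p}(M,m)$ as an infimum over all $(k+1)$-dimensional subspaces of $W^{1,p}(M,m)$, applied to the particular subspace $\widetilde{L}_{k+1}$. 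Taking the infimum over all admissible $L_{k+1}\subset W^{1,p}_{0}(\Omega,m)$ on the left-hand side yields $\widehat{\nu}_{k,p}(M,m)\leq\nu^{D}_{k+1,p}(\Omega,m)$, which is the claim.

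The only genuine technical point — and the step I would be most careful about — is the assertion that zero-extension maps $W^{1,p}_{0}(\Omega,m)$ into $W^{1,p}(M,m)$ with the gradient extended by zero. This is exactly the reason the Sobolev space in the definition of the local Dirichlet constant is taken to be $W^{1,p}_{0}$ (the closure of compactly supported smooth functions) rather than $W^{1,p}$: for $\phi\in C^{\infty}_{c}(\Omega)$ the extension is trivially smooth on $M$ with the expected gradient, and one passes to the closure using that zero-extension is an isometry $W^{1,p}_{0}(\Omega,m)\to W^{1,p}(M,m)$ onto a closed subspace, so limits of extensions are extensions of limits. I would state this as a one-line remark (it is completely standard) rather than belabor it. Everything else is bookkeeping: linearity and injectivity of zero-extension give $\dim\widetilde{L}_{k+1}=k+1$, and the Rayleigh-quotient identity above is immediate once the extension property is in hand.
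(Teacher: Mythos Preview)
Your proposal is correct and is essentially the argument the paper has in mind: the paper states the lemma as following directly from the definitions and gives no further proof, and your zero-extension argument is exactly the standard unpacking of that remark. Your write-up is more detailed than the paper's single line, but the underlying idea---embed $W^{1,p}_{0}(\Omega,m)$ into $W^{1,p}(M,m)$ by zero extension so that every competitor for $\nu^{D}_{k+1,p}(\Omega,m)$ becomes a competitor for $\widehat{\nu}_{k,p}(M,m)$ with identical Rayleigh quotient---is the same.
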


For a domain $\Omega \subset M$,
let $m_{\Omega}$ be the normalized volume measure on $\Omega$ defined as
\begin{equation}\label{eq:normalized measure}
m_{\Omega}:=\frac{1}{m(\Omega)}\,m|_{\Omega}.
\end{equation}

By the same argument as in the proof of Proposition \ref{prop:boundary concentration inequality},
we obtain the following boundary concentration inequality:
\begin{lem}\label{lem:local boundary concentration inequality}
Let $\Omega \subset M$ be a domain.
Then for all $r>0$,
\begin{equation*}
m_{\Omega}\left(\Omega  \setminus B_{r}(\partial \Omega)\right) \leq \exp\left(1-\nu^{D}_{1,p}(\Omega,m)^{\frac{1}{p}}\,r\right),
\end{equation*}
where $\partial \Omega$ denotes the boundary of $\Omega$.
\end{lem}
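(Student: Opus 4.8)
The plan is to repeat, essentially word for word, the proof of Proposition~\ref{prop:boundary concentration inequality}, with $M$, $\bm$ and $m$ replaced throughout by $\Omega$, $\partial\Omega$ and the normalized measure $m_{\Omega}$ of (\ref{eq:normalized measure}). As there, the heart of the matter is the one-step comparison
\begin{equation}\label{eq:local recursion}
\bigl(1+\epsilon^{p}\,\nu^{D}_{1,p}(\Omega,m)\bigr)\,m_{\Omega}\bigl(\Omega\setminus B_{r+\epsilon}(\partial\Omega)\bigr)\le m_{\Omega}\bigl(\Omega\setminus B_{r}(\partial\Omega)\bigr)\qquad(\epsilon,r>0),
\end{equation}
the exact analogue of (\ref{eq:relative volume comparison and eigenvalue}); granting it, the lemma follows by iterating (\ref{eq:local recursion}) exactly as (\ref{eq:boundary concentration inequality}) is deduced from (\ref{eq:relative volume comparison and eigenvalue}) in the proof of Proposition~\ref{prop:boundary concentration inequality} (put $\epsilon_{0}:=\nu^{D}_{1,p}(\Omega,m)^{-1/p}$---the claim being trivial when $\nu^{D}_{1,p}(\Omega,m)^{1/p}r\le1$---iterate (\ref{eq:local recursion}) in steps of length $\epsilon_{0}$, and invoke the monotonicity of $r\mapsto m_{\Omega}(\Omega\setminus B_{r}(\partial\Omega))$).

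To establish (\ref{eq:local recursion}) I would fix $\epsilon,r>0$, put $\Omega_{1}:=B_{r}(\partial\Omega)\cap\Omega$ and $\Omega_{2}:=\Omega\setminus B_{r+\epsilon}(\partial\Omega)$, and use the cutoff $\varphi:\Omega\to\mathbb{R}$ defined by $\varphi(x):=\min\{\epsilon^{-1}d(x,\Omega_{1}),\,1\}$. It is $\epsilon^{-1}$-Lipschitz, vanishes on $\Omega_{1}$ and equals $1$ on $\Omega_{2}$; since $\nabla\varphi=0$ a.e.\ on $\Omega_{1}$ and on $\Omega_{2}$, the gradient $\nabla\varphi$ is a.e.\ supported in the collar $\Omega\setminus(\Omega_{1}\cup\Omega_{2})=\{x\in\Omega\mid r<d(x,\partial\Omega)\le r+\epsilon\}$, whose $m_{\Omega}$-measure equals $1-m_{\Omega}(\Omega_{1})-m_{\Omega}(\Omega_{2})$. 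Hence
\begin{equation*}
\int_{\Omega}|\varphi|^{p}\,dm_{\Omega}\ge m_{\Omega}(\Omega_{2}),\qquad \int_{\Omega}\Vert\nabla\varphi\Vert^{p}\,dm_{\Omega}\le \epsilon^{-p}\bigl(1-m_{\Omega}(\Omega_{1})-m_{\Omega}(\Omega_{2})\bigr).
\end{equation*}
The one point that needs a word of care is that $\varphi$ is admissible in (\ref{eq:local Dirichlet Poincare constant}) with $k=1$: since $\varphi\equiv0$ on $\Omega_{1}$, the set $\{\varphi\ne0\}$ lies in $\{x\in\Omega\mid d(x,\partial\Omega)>r\}$, whose closure in $M$ is a compact subset of $\Omega$ (here one uses that $\Omega$ is a domain), so $\varphi$ is Lipschitz with compact support in $\Omega$ and therefore lies in $W^{1,p}_{0}(\Omega,m)$. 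Testing (\ref{eq:local Dirichlet Poincare constant}) against the line spanned by $\varphi$, and using that the Rayleigh quotient is unaffected by replacing $m$ with $m_{\Omega}$, gives
\begin{equation*}
\nu^{D}_{1,p}(\Omega,m)\le \frac{\int_{\Omega}\Vert\nabla\varphi\Vert^{p}\,dm_{\Omega}}{\int_{\Omega}|\varphi|^{p}\,dm_{\Omega}}\le \frac{1-m_{\Omega}(\Omega_{1})-m_{\Omega}(\Omega_{2})}{\epsilon^{p}\,m_{\Omega}(\Omega_{2})},
\end{equation*}
which rearranges to (\ref{eq:local recursion}) on noting $1-m_{\Omega}(\Omega_{1})=m_{\Omega}(\Omega\setminus B_{r}(\partial\Omega))$ and $m_{\Omega}(\Omega_{2})=m_{\Omega}(\Omega\setminus B_{r+\epsilon}(\partial\Omega))$; the degenerate cases $m_{\Omega}(\Omega_{2})=0$ and $\nu^{D}_{1,p}(\Omega,m)=0$ are immediate.

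Since every step is a direct transcription of the corresponding step in the proof of Proposition~\ref{prop:boundary concentration inequality}, I do not expect a serious obstacle. The only genuinely new ingredient---and the place to be attentive---is checking that the cutoff $\varphi$ belongs to $W^{1,p}_{0}(\Omega,m)$ rather than merely to $W^{1,p}(\Omega,m)$; this is exactly what makes it legitimate to feed $\varphi$ into the variational characterization of the \emph{Dirichlet} constant $\nu^{D}_{1,p}(\Omega,m)$, and it is the one spot where the hypothesis that $\Omega$ is a domain enters.
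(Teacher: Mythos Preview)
Your proposal is correct and follows exactly the approach of the paper: the paper's own proof of this lemma consists solely of the remark that it follows ``by the same argument as in the proof of Proposition~\ref{prop:boundary concentration inequality}'', and your write-up is precisely that transcription. You in fact give more detail than the paper does, in particular the verification that the cutoff $\varphi$ lies in $W^{1,p}_{0}(\Omega,m)$, which is the one point that genuinely differs from the boundary case.
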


We further observe the following domain decomposing principle:
\begin{lem}\label{lem:higher order to smallest}
For any pairwise disjoint sequence $\{\Omega_{\alpha}\}^{k}_{\alpha=0}$ of domains in $M$,
we have
\begin{equation*}
\nu^{D}_{k+1,p}\left(  \bigsqcup^{k}_{\alpha=0} \Omega_{\alpha},m  \right)\leq \max_{\alpha=0,\dots,k} \nu^{D}_{1,p}(\Omega_{\alpha},m).
\end{equation*}
\end{lem}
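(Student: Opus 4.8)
The plan is to construct, for each domain $\Omega_\alpha$, a nearly optimal $1$-dimensional test space witnessing $\nu^D_{1,p}(\Omega_\alpha,m)$, and then to combine them into a $(k+1)$-dimensional test space for the left-hand side by exploiting the pairwise disjointness of the $\Omega_\alpha$. Concretely, fix $\varepsilon>0$; for each $\alpha=0,\dots,k$ choose a nonzero function $\psi_\alpha\in W^{1,p}_0(\Omega_\alpha,m)$ with
\begin{equation*}
\frac{\int_{\Omega_\alpha}\Vert\nabla\psi_\alpha\Vert^p\,dm}{\int_{\Omega_\alpha}\vert\psi_\alpha\vert^p\,dm}\leq \nu^D_{1,p}(\Omega_\alpha,m)+\varepsilon.
\end{equation*}
Extending each $\psi_\alpha$ by zero outside $\Omega_\alpha$ gives a function in $W^{1,p}_0\bigl(\bigsqcup_\alpha\Omega_\alpha,m\bigr)$ (here one should remark that a function in $W^{1,p}_0$ of a disjoint union is simply a tuple of functions, one in each $W^{1,p}_0(\Omega_\alpha,m)$, with the gradient and $L^p$ norms adding). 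I would then set $L_{k+1}:=\mathrm{span}\{\psi_0,\dots,\psi_k\}$, which is genuinely $(k+1)$-dimensional since the $\psi_\alpha$ have pairwise disjoint supports.

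The key computation is then to bound the Rayleigh quotient of an arbitrary $\phi=\sum_\alpha c_\alpha\psi_\alpha\in L_{k+1}\setminus\{0\}$. Because the supports are disjoint, both the numerator and the denominator split as sums over $\alpha$:
\begin{equation*}
\frac{\int\Vert\nabla\phi\Vert^p\,dm}{\int\vert\phi\vert^p\,dm}
=\frac{\sum_\alpha\vert c_\alpha\vert^p\int_{\Omega_\alpha}\Vert\nabla\psi_\alpha\Vert^p\,dm}{\sum_\alpha\vert c_\alpha\vert^p\int_{\Omega_\alpha}\vert\psi_\alpha\vert^p\,dm}.
\end{equation*}
This is a weighted average (with weights $\vert c_\alpha\vert^p\int_{\Omega_\alpha}\vert\psi_\alpha\vert^p\,dm$, not all zero) of the individual ratios $\int_{\Omega_\alpha}\Vert\nabla\psi_\alpha\Vert^p\,dm\big/\int_{\Omega_\alpha}\vert\psi_\alpha\vert^p\,dm$, hence is at most $\max_\alpha(\nu^D_{1,p}(\Omega_\alpha,m)+\varepsilon)$. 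Taking the supremum over $\phi\in L_{k+1}\setminus\{0\}$ and then the infimum in the definition of $\nu^D_{k+1,p}$ yields
\begin{equation*}
\nu^D_{k+1,p}\Bigl(\bigsqcup_\alpha\Omega_\alpha,m\Bigr)\leq\max_\alpha\nu^D_{1,p}(\Omega_\alpha,m)+\varepsilon,
\end{equation*}
and letting $\varepsilon\to0$ finishes the argument.

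The only genuine subtlety — and the step I would spell out most carefully — is the claim that differentiation and integration behave additively across the disjoint union, i.e. that the zero-extensions of the $\psi_\alpha$ lie in $W^{1,p}_0$ of the union and that there are no cross terms in $\int\Vert\nabla\phi\Vert^p\,dm$ and $\int\vert\phi\vert^p\,dm$. Since the $\Omega_\alpha$ are open and pairwise disjoint, on the (open) set $\bigsqcup_\alpha\Omega_\alpha$ the functions $\psi_\alpha$ have essentially disjoint supports, so this is a pointwise-a.e. statement with no interaction, and the elementary fact $\gamma^{\pm}$ of a span of compactly supported pairwise-disjointly-supported functions equals its dimension is not even needed here (that is only relevant for the $p$-Laplacian reformulation in Remark~\ref{rem:eigenvalue of plaplacian}). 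Everything else is the standard observation that a ratio of sums of nonnegative terms is bounded by the maximum of the termwise ratios. I do not anticipate any real obstacle beyond being careful about the definition of $W^{1,p}_0$ on a disconnected domain.
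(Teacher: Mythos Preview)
Your proposal is correct and follows essentially the same approach as the paper: choose near-optimal test functions $\psi_\alpha\in W^{1,p}_0(\Omega_\alpha,m)$, extend by zero to the disjoint union, span a $(k+1)$-dimensional subspace, and use disjointness of supports plus the elementary inequality $\frac{\sum b_\alpha}{\sum a_\alpha}\le\max_\alpha \frac{b_\alpha}{a_\alpha}$ to bound the Rayleigh quotient. Your treatment is in fact slightly cleaner, since you bound the Rayleigh quotient of an arbitrary $\phi\in L_{k+1}$ directly rather than introducing an auxiliary $\delta$ to approximate the supremum as the paper does.
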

\begin{proof}
Fix $\epsilon>0$.
For each $\alpha$,
take a non-zero $\phi_{\alpha}\in W^{1,p}_{0}(\Omega_{\alpha},m)$ with
\begin{equation}\label{eq:first approximation}
\frac{\int_{\Omega_{\alpha}} \,\Vert \nabla \phi_{\alpha} \Vert^{p} \,   dm}{\int_{\Omega_{\alpha}} \,\vert \phi_{\alpha} \vert^{p} \,   dm}<\nu^{D}_{1,p}(\Omega_{\alpha},m)+\epsilon.
\end{equation}
We define $\Omega:=\sqcup^{k}_{\alpha=0} \Omega_{\alpha}$,
and regard $\phi_{\alpha}$ as a function in $W^{1,p}_{0}(\Omega,m)$ by setting it as zero outside $\Omega_{\alpha}$.
We note that
$\{\phi_{\alpha}\}^{k}_{\alpha=0}$ are independent in $W^{1,p}_{0}(\Omega,m)$.
Let $L_{\epsilon}$ be the $(k+1)$-dimensional subspace of $W^{1,p}_{0}(\Omega,m)$ spanned by $\phi_{0},\dots,\phi_{k}$.
Fix $\delta>0$,
and take a non-zero $\phi \in L_{\epsilon}$ with
\begin{equation}\label{eq:second approximation}
\sup_{\psi \in L_{\epsilon}\setminus \{0\}} \frac{\int_{\Omega} \,\Vert \nabla \psi \Vert^{p} \,   dm  }{ \int_{\Omega} \,\vert \psi \vert^{p} \,   dm }<\frac{\int_{\Omega} \,\Vert \nabla \phi \Vert^{p} \,   dm  }{ \int_{\Omega} \,\vert \phi \vert^{p} \,   dm }+\delta.
\end{equation}
Let $c_{0},\dots,c_{k}$ be constants determined by $\phi=\sum^{k}_{\alpha=0}\,c_{\alpha} \phi_{\alpha}$,
and let $I$ be the set of all $\alpha$ such that $c_{\alpha}\neq 0$.
Since $\phi_{\alpha}\equiv 0$ outside $\Omega_{\alpha}$,
we possess
\begin{align}\label{eq:first lp representation}
\int_{\Omega} \,\vert \phi \vert^{p} \,   dm&=\sum_{\alpha\in I} \vert c_{\alpha} \vert^{p} \,\int_{\Omega_{\alpha}} \,\vert \phi_{\alpha} \vert^{p} \,   dm,\\ \label{eq:second lp representation}
\int_{\Omega} \,\Vert \nabla \phi \Vert^{p} \,   dm&=\sum_{\alpha \in I} \vert c_{\alpha} \vert^{p} \,\int_{\Omega_{\alpha}} \,\Vert \nabla \phi_{\alpha} \Vert^{p} \,   dm.
\end{align}

We now recall the following elementary inequality:
For $\beta=0,\dots l$,
and for constants $a_{\beta},b_{\beta}$ with $a_{\beta}\neq 0$,
we have
\begin{equation}\label{eq:elementary inequality}
\frac{\sum^{l}_{\beta=0}b_{\beta}}{\sum^{l}_{\beta=0}a_{\beta}}\leq \max_{\beta=0,\cdots,l} \frac{b_{\beta}}{a_{\beta}}.
\end{equation}
The inequality (\ref{eq:elementary inequality}) can be proven by induction on $l$.
From (\ref{eq:second approximation}), (\ref{eq:first lp representation}), (\ref{eq:second lp representation}), (\ref{eq:elementary inequality}), (\ref{eq:first approximation}),
we deduce
\begin{align*}
\nu^{D}_{k+1,p}\left( \Omega,m \right)&\leq \sup_{\psi \in L_{\epsilon}\setminus \{0\}} \frac{\int_{\Omega} \,\Vert \nabla \psi \Vert^{p} \,   dm  }{ \int_{\Omega} \,\vert \psi \vert^{p} \,   dm }
<\frac{\int_{\Omega} \,\Vert \nabla \phi \Vert^{p} \,   dm  }{ \int_{\Omega} \,\vert \phi \vert^{p} \,   dm }+\delta\\
&=\frac{\sum_{\alpha \in I} \vert c_{\alpha} \vert^{p} \,\int_{\Omega_{\alpha}} \,\Vert \nabla \phi_{\alpha} \Vert^{p} \,   dm}{\sum_{\alpha \in I} \vert c_{\alpha} \vert^{p} \,\int_{\Omega_{\alpha}} \,\vert \phi_{\alpha} \vert^{p} \,   dm}+\delta\\
&\leq \max_{\alpha \in I} \frac{\int_{\Omega_{\alpha}} \,\Vert \nabla \phi_{\alpha} \Vert^{p} \,   dm}{\int_{\Omega_{\alpha}} \,\vert \phi_{\alpha} \vert^{p} \,   dm}+\delta<\nu^{D}_{1,p}(\Omega_{\alpha_{0}},m)+\epsilon+\delta,
\end{align*}
where $\alpha_{0}\in I$ is determined by
\begin{equation*}
\frac{\int_{\Omega_{\alpha_{0}}} \,\Vert \nabla \phi_{\alpha_{0}} \Vert^{p} \,   dm}{\int_{\Omega_{\alpha_{0}}} \,\vert \phi_{\alpha_{0}} \vert^{p} \,   dm}=\max_{\alpha \in I} \frac{\int_{\Omega_{\alpha}} \,\Vert \nabla \phi_{\alpha} \Vert^{p} \,   dm}{\int_{\Omega_{\alpha}} \,\vert \phi_{\alpha} \vert^{p} \,   dm}.
\end{equation*}
We obtain
\begin{equation*}
\nu^{D}_{k+1,p}\left(\Omega,m \right)< \max_{\alpha=0,\dots,k}\nu^{D}_{1,p}(\Omega_{\alpha},m)+\epsilon+\delta.
\end{equation*}
Letting $\epsilon,\delta \to 0$ leads us to the desired conclusion.
\end{proof}

\subsection{Proof of Theorem \ref{thm:main result}}

Theorem \ref{thm:main result} follows from the following:
\begin{thm}\label{thm:modified main result}
Let $(M,m)$ be a closed weighted Riemannian manifold.
For any sequence $\{A_{\alpha}\}^{k}_{\alpha=0}$ of Borel subsets of $M$,
we have
\begin{equation*}
 \widehat{\nu}_{k,p}(M,m)^{\frac{1}{p}} \leq \frac{2}{  \mathcal{D}(\{A_{\alpha}\})   }\max_{\alpha=0,\dots,k}  \log \frac{e(1-\sum_{\beta\neq \alpha}m(A_{\beta})    )}{m(A_{\alpha})}.
\end{equation*}
\end{thm}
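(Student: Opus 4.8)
The plan is to combine the three principles from this subsection with a geometric pigeonhole argument to carve out disjoint domains from the sets $A_\alpha$. First I would dispose of trivial cases: if some $m(A_\alpha)=0$ the right-hand side is $+\infty$ and there is nothing to prove, and if $\mathcal D(\{A_\alpha\})=0$ the bound is again vacuous, so we may assume all $m(A_\alpha)>0$ and $D:=\mathcal D(\{A_\alpha\})>0$. Set $r:=D/2$ and define $\Omega_\alpha:=\inte B_{r}(A_\alpha)$, the interiors of the closed $r$-neighborhoods. Because any two distinct $A_\alpha,A_\beta$ are at distance at least $D=2r$, the open $r$-neighborhoods $\Omega_\alpha$ are pairwise disjoint; each $\Omega_\alpha$ is a domain in $M$ (possibly disconnected, but that is harmless — or one passes to a connected component, whichever is cleaner). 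This reduces matters, via Lemma \ref{lem:domain monotonicity} applied to $\Omega:=\bigsqcup_{\alpha=0}^k\Omega_\alpha$ and then Lemma \ref{lem:higher order to smallest}, to
\begin{equation*}
\widehat\nu_{k,p}(M,m)\le \nu^{D}_{k+1,p}\Bigl(\bigsqcup_{\alpha=0}^k\Omega_\alpha,m\Bigr)\le \max_{\alpha=0,\dots,k}\nu^{D}_{1,p}(\Omega_\alpha,m),
\end{equation*}
so it suffices to bound each $\nu^{D}_{1,p}(\Omega_\alpha,m)^{1/p}$ individually by $\frac{2}{D}\log\frac{e(1-\sum_{\beta\ne\alpha}m(A_\beta))}{m(A_\alpha)}$.

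Fix $\alpha$ and write $\lambda:=\nu^{D}_{1,p}(\Omega_\alpha,m)$. The key geometric observation is that $A_\alpha\subset\Omega_\alpha$ lies deep inside $\Omega_\alpha$: for every $x\in A_\alpha$ and every $y\notin\Omega_\alpha$ we have $d(x,y)\ge r$, hence $A_\alpha\subset\Omega_\alpha\setminus B_{s}(\partial\Omega_\alpha)$ for every $s<r$. Now apply the local boundary concentration inequality, Lemma \ref{lem:local boundary concentration inequality}, with radius $s$: it gives
\begin{equation*}
\frac{m(A_\alpha)}{m(\Omega_\alpha)}\le m_{\Omega_\alpha}\bigl(\Omega_\alpha\setminus B_{s}(\partial\Omega_\alpha)\bigr)\le \exp\bigl(1-\lambda^{1/p}s\bigr)
\end{equation*}
for all $s<r=D/2$; letting $s\uparrow D/2$ yields $m(A_\alpha)\le m(\Omega_\alpha)\exp(1-\lambda^{1/p}D/2)$. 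Finally, since the $\Omega_\beta$ for $\beta\ne\alpha$ are disjoint from $\Omega_\alpha$ and each contains $A_\beta$, we have $m(\Omega_\alpha)\le 1-\sum_{\beta\ne\alpha}m(\Omega_\beta)\le 1-\sum_{\beta\ne\alpha}m(A_\beta)$. Combining, $m(A_\alpha)\le e\,(1-\sum_{\beta\ne\alpha}m(A_\beta))\,e^{-\lambda^{1/p}D/2}$, which rearranges exactly to $\lambda^{1/p}\le \frac{2}{D}\log\frac{e(1-\sum_{\beta\ne\alpha}m(A_\beta))}{m(A_\alpha)}$. Taking the maximum over $\alpha$ finishes the proof, and Theorem \ref{thm:main result} then follows from Proposition \ref{prop:equivalence between Poincare constant and modified Poincare constant} since $\nu_{k,p}\le\widehat\nu_{k,p}$.

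I expect the only genuine subtlety to be bookkeeping around the definition of the domains $\Omega_\alpha$: one must make sure they are honest open sets with nonempty interior to which $\nu^D_{1,p}$ and the boundary concentration lemma apply, handle the degenerate possibility that $B_r(A_\alpha)=M$ (in which case $m(\Omega_\alpha)=1$ but then $\partial\Omega_\alpha$ could be empty — though this forces $\sum_{\beta\ne\alpha}m(A_\beta)=0$ and the inequality is immediate since $\widehat\nu_{k,p}$ of a space with no disjoint test sets behaves trivially, or one just notes the bound is weaker than claimed), and deal with strict-versus-non-strict neighborhoods so that disjointness is preserved while $A_\alpha$ stays in the interior. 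None of this is deep; the substantive content is entirely in the three lemmas already proved, and the argument above is essentially a three-line chain once the neighborhoods are set up. The geometric heart — that separated sets give deeply-interior subsets of disjoint neighborhoods, on which large Dirichlet Poincaré constants force exponential smallness — is exactly the "geometric observation" advertised in the introduction.
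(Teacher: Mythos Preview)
Your proof is correct and follows essentially the same route as the paper's: both combine Lemmas~\ref{lem:domain monotonicity}, \ref{lem:higher order to smallest}, and \ref{lem:local boundary concentration inequality} with the observation that $A_\alpha$ sits at distance $\geq r$ from $\partial\Omega_\alpha$ and that $m(\Omega_\alpha)\le 1-\sum_{\beta\ne\alpha}m(A_\beta)$. The paper takes $r<R/2$ and passes to the limit (and phrases the concentration step as an indirect ``$r\le r_0$'' argument at the maximizing index $\alpha_0$), whereas you work directly at $r=D/2$ and take $s\uparrow r$ inside Lemma~\ref{lem:local boundary concentration inequality}, bounding every $\nu^D_{1,p}(\Omega_\alpha,m)$ rather than just the largest; these are cosmetic differences, and your version is if anything slightly more streamlined.
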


\begin{proof}[Proof of Theorem \ref{thm:main result}]
In virtue of Theorem \ref{thm:modified main result} and Proposition \ref{prop:equivalence between Poincare constant and modified Poincare constant},
we get (\ref{eq:main inequality}).
This completes the proof.
\end{proof}
\begin{proof}[Proof of Theorem \ref{thm:modified main result}]
Set $R:=\mathcal{D}(\{A_{\alpha}\})$.
For a fixed $r\in (0,R/2)$,
we define a pairwise disjoint sequence $\{\Omega_{\alpha}\}^{k}_{\alpha=0}$ of domains $\Omega_{\alpha}\subset M$ as the open $r$-neighborhood of $A_{\alpha}$ in $M$.
Lemmas \ref{lem:domain monotonicity} and \ref{lem:higher order to smallest} lead us to
\begin{equation}\label{eq:domain monotonicity and higher order to smallest}
\widehat{\nu}_{k,p}(M,m)\leq \nu^{D}_{k+1,p}\left(\bigsqcup^{k}_{\alpha=0}\Omega_{\alpha},m\right)\leq \max_{\alpha}\nu^{D}_{1,p}(\Omega_{\alpha},m).
\end{equation}

For a fixed $\epsilon>0$,
we define
\begin{equation*}
r_{0}:=\frac{1}{     \nu^{D}_{1,p}(\Omega_{\alpha_{0}},m)^{\frac{1}{p}}}  \log \frac{e(1-\sum_{\alpha\neq \alpha_0}m(A_{\alpha})    )}{m(A_{\alpha_{0}})}+\epsilon,
\end{equation*}
where $\alpha_{0}$ is determined by
\begin{align*}
\nu^{D}_{1,p}(\Omega_{\alpha_{0}},m)=\max_{\alpha} \nu^{D}_{1,p}(\Omega_{\alpha},m).
\end{align*}
Notice that
\begin{equation}\label{eq:strict inequality}
\frac{m(A_{\alpha_{0}})}{1-\sum_{\alpha\neq \alpha_0}m(A_{\alpha})   }>\exp\left(1-\nu^{D}_{1,p}(\Omega_{\alpha_{0}},m)^{\frac{1}{p}}\,r_{0}\right).
\end{equation}
Let us prove $r\leq r_{0}$.
We have
\begin{equation*}
m(\Omega_{\alpha_{0}})\leq m\left(M\setminus \bigsqcup_{\alpha \neq \alpha_{0}}\Omega_{\alpha}\right)=1-\sum_{\alpha\neq \alpha_{0}}m(\Omega_{\alpha})\leq 1-\sum_{\alpha \neq \alpha_{0}}m(A_{\alpha}).
\end{equation*}
It follows that
\begin{equation}\label{eq:lower bound on largest set}
m_{\Omega_{\alpha_{0}}}(A_{\alpha_{0}})=\frac{m(A_{\alpha_{0}})}{m(\Omega_{\alpha_{0}})}\geq \frac{m(A_{\alpha_{0}})}{1-\sum_{\alpha \neq \alpha_{0}   } m(A_{\alpha}) }
.
\end{equation}
where $m_{\Omega_{\alpha_{0}}}$ is defined as (\ref{eq:normalized measure}).
By combining (\ref{eq:strict inequality}) and (\ref{eq:lower bound on largest set}),
and by Lemma \ref{lem:local boundary concentration inequality},
we obtain
\begin{equation*}
m_{\Omega_{\alpha_{0}}}(A_{\alpha_{0}})> \exp\left(1-\nu^{D}_{1,p}(\Omega_{\alpha_{0}},m)^{\frac{1}{p}}\,r_{0}\right) \geq m_{\Omega_{\alpha_{0}}}\left(\Omega_{\alpha_{0}}  \setminus B_{r_{0}}(\partial \Omega_{\alpha_{0}})\right).
\end{equation*}
This yields $B_{r_{0}}(\partial \Omega_{\alpha_{0}}) \cap A_{\alpha_{0}}\neq \emptyset$.
Therefore,
we see $r\leq r_{0}$.

By letting $\epsilon \to 0$ and $r\to R/2$,
and by (\ref{eq:domain monotonicity and higher order to smallest}),
we arrive at
\begin{align*}
R &\leq \frac{2}{     \nu^{D}_{1,p}(\Omega_{\alpha_{0}},m)^{\frac{1}{p}}}  \log \frac{e(1-\sum_{\alpha\neq \alpha_0}m(A_{\alpha})    )}{m(A_{\alpha_{0}})}\\
    &\leq \frac{2}{     \widehat{\nu}_{k,p}(M,m)^{\frac{1}{p}}}\max_{\alpha=0,\dots,k}  \log \frac{e(1-\sum_{\beta\neq \alpha}m(A_{\beta})    )}{m(A_{\alpha})}.
\end{align*}
This completes the proof of Theorem \ref{thm:modified main result}.
\end{proof}

Combining Theorem \ref{thm:modified main result} with (\ref{gamma1}) we obtain the
following corollary. Refer to Remark \ref{rem:eigenvalue of plaplacian} for
the definition of $\lambda_{k,p}^{\pm}(M,m)$.
\begin{cor}Under the same assumption of Theorem \ref{thm:modified main result}, we have
 \begin{align*}
  \lambda_{k,p}^{\pm}(M,m)^{\frac{1}{p}}\leq  \frac{2}{  \mathcal{D}(\{A_{\alpha}\})   }\max_{\alpha=0,\dots,k}  \log \frac{e(1-\sum_{\beta\neq \alpha}m(A_{\beta})    )}{m(A_{\alpha})}
  \end{align*}and in particular for $k=1$ we have
 \begin{align*}
  \lambda_{1,p}(M,m)^{\frac{1}{p}}\leq  \frac{2}{ d(A_0,A_1)
  }\max \Big\{  \log \frac{e(1- m(A_{1})    )}{m(A_{0})}, \log \frac{e(1- m(A_{0})    )}{m(A_{1})} \Big\}.
  \end{align*}
 \end{cor}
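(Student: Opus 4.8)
The plan is to obtain this corollary as an immediate formal consequence of Theorem~\ref{thm:modified main result} together with the spectral comparison~(\ref{gamma1}) recorded in Remark~\ref{rem:eigenvalue of plaplacian}; no further geometric input is needed. Recall that the two increasing sequences $\{\lambda^{\pm}_{k,p}(M,m)\}_{k}$ consist of eigenvalues of the weighted $p$-Laplacian, and that~(\ref{gamma1}) asserts $\lambda^{\pm}_{k,p}(M,m)\leq \widehat{\nu}_{k,p}(M,m)$ — this holds because any $(k+1)$-dimensional subspace $L_{k+1}\subset W^{1,p}(M,m)$ gives a set $L_{k+1}\cap\{\|\phi\|_{L^p}=1\}$ of genus $\gamma^{\pm}=k+1$ (Proposition~5.2 in \cite{S}, Section~3 in \cite{AHP}), hence an admissible competitor in the genus-based min--max defining $\lambda^{\pm}_{k,p}(M,m)$.

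First I would take $p$-th roots in~(\ref{gamma1}) and chain it with Theorem~\ref{thm:modified main result}, obtaining
\begin{equation*}
\lambda^{\pm}_{k,p}(M,m)^{\frac{1}{p}}\leq \widehat{\nu}_{k,p}(M,m)^{\frac{1}{p}}\leq \frac{2}{\mathcal{D}(\{A_{\alpha}\})}\max_{\alpha=0,\dots,k}\log\frac{e(1-\sum_{\beta\neq\alpha}m(A_{\beta}))}{m(A_{\alpha})},
\end{equation*}
which is the first asserted inequality. For the case $k=1$ I would then invoke the identity $\lambda_{1,p}(M,m)=\lambda^{+}_{1,p}(M,m)$ of Dr\'abek--Robinson \cite{DR} (as noted in Remark~\ref{rem:eigenvalue of plaplacian}), specialize the displayed inequality to $k=1$ with the two-term sequence $\{A_0,A_1\}$, use that $\mathcal{D}(\{A_0,A_1\})=d(A_0,A_1)$, and finally unwind the maximum over $\alpha\in\{0,1\}$: the $\alpha=0$ term is $\log\frac{e(1-m(A_1))}{m(A_0)}$ and the $\alpha=1$ term is $\log\frac{e(1-m(A_0))}{m(A_1)}$, which is precisely the two-element maximum in the statement.

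Since every step is a citation or a substitution, there is essentially no obstacle; the only point demanding care is the correct use of~(\ref{gamma1}), namely that finite-dimensional subspaces are admissible in the genus-based variational characterization of $\lambda^{\pm}_{k,p}(M,m)$ — but this admissibility is exactly what the references quoted in Remark~\ref{rem:eigenvalue of plaplacian} supply.
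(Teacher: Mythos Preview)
Your proposal is correct and matches the paper's approach exactly: the paper simply states that the corollary follows by combining Theorem~\ref{thm:modified main result} with~(\ref{gamma1}), and for $k=1$ invokes the identification $\lambda_{1,p}(M,m)=\lambda^{+}_{1,p}(M,m)$ from \cite{DR} recorded in Remark~\ref{rem:eigenvalue of plaplacian}. Your write-up spells out these citations carefully and adds nothing extraneous.
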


\begin{rem}\label{rem:comparing} We shall compare our estimate (\ref{eq:main inequality}) with the
 estimates obtained by Chung-Grigor'yan-Yau \cite{CGY1,CGY2} ((\ref{eq:CGY})) and
 Gozlan-Herry \cite{GH}.

 Let $\{A_{\alpha}\}_{\alpha=0}^k$ be a sequence of Borel subsets of $(M,m)$.
 By taking a permutation we may
 assume $m(A_0)\leq m(A_1)\leq \cdots \leq m(A_k)$. In this setting we
 have
 \begin{align}\label{eq:minmin}
 \log \frac{e(1-\sum_{\beta\neq 0}m(A_{\beta}))}{m(A_0)}= \max_{\alpha}\log \frac{e(1-\sum_{\beta\neq \alpha}m(A_{\beta}))}{m(A_{\alpha})}
  \end{align}and
 \begin{align*}
  \log \frac{e}{m(A_0)m(A_1)}= \max_{\alpha\neq \beta} \log \frac{e}{m(A_{\alpha})m(A_{\beta})}.
  \end{align*}Thus our estimate (\ref{eq:main inequality}) is better than
 Chung-Grigor'yan-Yau's estimate (\ref{eq:CGY}) as long as
 \begin{align*}
  e\,m(A_1)\Big(1-\sum_{\beta\neq 0}m(A_{\beta})\Big)^2\leq m(A_0).
  \end{align*}Otherwise Chung-Grigor'yan-Yau's estimate (\ref{eq:CGY})
 is better.

 In Proposition 2.2 of \cite{GH} Gozlan-Herry imposed the following assumption for $k$ Borel subsets $A_1,A_2,\dots,A_k$ of $M$;
 \begin{equation*}
  m(A_\alpha)+ \sum_{\beta=1}^k m(A_\beta)\geq 1 \text{ for any }\alpha=1,2,\cdots, k.
  \end{equation*}Under the assumption setting $A_0:=M\setminus B_r(\bigcup_{\alpha=1}^k
 A_{\alpha})$ they proved
 \begin{equation}\label{eneq:GH}
  \lambda_k(M,m)^{\frac{1}{2}}\leq \frac{2}{\mathcal{D}(\{A_{\alpha}\})} \phi \Big(\frac{1}{c} \log
   \frac{1-\sum_{\beta \neq 0} m(A_\beta)}{m(A_0)}\Big),
  \end{equation}where $\phi (x):=\max \{\sqrt{x},x\}$ and $c>0$ is a constant.
 In this setting observe that $m(A_0)\leq m(A_\alpha)$ for any $\alpha$
 and thus we have (\ref{eq:minmin}). Note also that
 $\phi^{-1}(x)=\min\{x,x^2\}$ and
 \begin{align*}
  \log \frac{e(1-\sum_{\beta \neq 0}m(A_{\beta}))}{m(A_0)}\geq 1.
  \end{align*}Hence as long as
 \begin{align*}
  \Big(\frac{1-\sum_{\beta\neq
  0}m(A_{\beta})}{m(A_0)}\Big)^{\frac{1}{c}-1}\geq e
  \end{align*}our estimate (\ref{eq:main inequality}) is better than
 (\ref{eneq:GH}) and otherwise (\ref{eneq:GH}) is better. We remark that
 Gozlan-Herry showed that $c=\frac{\log 5}{4}<1$ (proof of Theorem 2.1
 of \cite{GH}) and so $\frac{1}{c}-1>0$. Also our inequality holds
 without any restriction to $\{ A_{\alpha}\}$.
 \end{rem}
 
\section{Upper bounds for Dirichlet Poincar\'e constants}\label{sec:Dirichlet Poincare constants and boundary concentration}
The aim of this section is to formulate an analogue of Theorem \ref{thm:main result} for Dirichlet eigenvalues.
In the present section,
we always assume that
$\partial M$ is non-empty.

We summarize some key lemmas to prove Theorem \ref{thm:main result with boundary}.
We denote by $\inte M$ the interior of $M$.
\begin{lem}\label{lem:Dirichlet domain monotonicity}
For a domain $\Omega \subset \inte M$,
we have
\begin{equation*}
\nu^{D}_{k,p}(M,m)\leq \nu^{D}_{k,p}(\Omega,m),
\end{equation*}
where $\nu^{D}_{k,p}(M,m)$ and $\nu^{D}_{k,p}(\Omega,m)$ are defined as $(\ref{eq:Dirichlet Poincare constant})$ and $(\ref{eq:local Dirichlet Poincare constant})$,
respectively.
\end{lem}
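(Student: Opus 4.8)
The plan is to reduce the statement to a comparison of admissible competitor subspaces in the two variational problems defining $\nu^{D}_{k,p}(M,m)$ and $\nu^{D}_{k,p}(\Omega,m)$. The key observation is that for a domain $\Omega \subset \inte M$, any function $\phi \in W^{1,p}_{0}(\Omega,m)$ can be extended by zero to all of $M$, and this zero-extension lies in $W^{1,p}_{0}(M,m)$ precisely because $\Omega$ is compactly contained in the interior, so the extension does not interact with $\partial M$. Under this extension both the numerator $\int \Vert \nabla \phi \Vert^{p}\,dm$ and the denominator $\int \vert \phi \vert^{p}\,dm$ are unchanged, since $\phi$ and $\nabla \phi$ vanish off $\Omega$.

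First I would fix an arbitrary $k$-dimensional subspace $L_{k}$ of $W^{1,p}_{0}(\Omega,m)$ realizing a value close to $\nu^{D}_{k,p}(\Omega,m)$ (or simply an arbitrary one, since the infimum is over all such $L_{k}$). Applying the zero-extension map $\iota\colon W^{1,p}_{0}(\Omega,m)\to W^{1,p}_{0}(M,m)$ to $L_{k}$, I obtain a subspace $\iota(L_{k})\subset W^{1,p}_{0}(M,m)$; I would check that $\iota$ is linear and injective, hence $\dim \iota(L_{k})=k$, so $\iota(L_{k})$ is an admissible competitor in the definition $(\ref{eq:Dirichlet Poincare constant})$ of $\nu^{D}_{k,p}(M,m)$. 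Then, because the Rayleigh quotient is preserved under $\iota$,
\begin{equation*}
\nu^{D}_{k,p}(M,m)\leq \sup_{\phi \in \iota(L_{k})\setminus\{0\}} \frac{\int_{M} \,\Vert \nabla \phi \Vert^{p} \, dm}{\int_{M} \,\vert \phi \vert^{p} \, dm} = \sup_{\psi \in L_{k}\setminus\{0\}} \frac{\int_{\Omega} \,\Vert \nabla \psi \Vert^{p} \, dm}{\int_{\Omega} \,\vert \psi \vert^{p} \, dm}.
\end{equation*}
Taking the infimum over all $k$-dimensional $L_{k}\subset W^{1,p}_{0}(\Omega,m)$ on the right-hand side yields $\nu^{D}_{k,p}(M,m)\leq \nu^{D}_{k,p}(\Omega,m)$.

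The only point that requires a little care — and the step I expect to be the main (minor) obstacle — is the claim that the zero-extension of a $W^{1,p}_{0}(\Omega,m)$ function lies in $W^{1,p}_{0}(M,m)$; this is standard when $\Omega \Subset \inte M$ because one can approximate by compactly supported smooth functions on $\Omega$ whose zero-extensions are compactly supported smooth functions on $\inte M$, and the extension map is an isometry onto its image for the relevant norms, so the approximation passes to the limit. I would state this as a one-line remark rather than belabor it. Everything else is a direct unwinding of the definitions.
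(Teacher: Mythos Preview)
Your proposal is correct and matches the paper's approach exactly: the paper states that Lemma~\ref{lem:Dirichlet domain monotonicity} follows by the same argument as Lemma~\ref{lem:domain monotonicity}, which in turn is said to follow directly from the definitions, i.e., precisely the zero-extension argument you wrote out. The only difference is that you spelled out the details (injectivity of the extension, preservation of the Rayleigh quotient, and the one-line justification that zero-extension lands in $W^{1,p}_{0}(M,m)$ when $\Omega\subset\inte M$), whereas the paper leaves all of this implicit.
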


\begin{lem}\label{lem:Dirichlet local boundary concentration inequality}
Let $\Omega \subset \inte M$ be a domain.
For all $r>0$,
we have
\begin{equation*}
m_{\Omega}\left(\Omega  \setminus B_{r}(\partial \Omega)\right) \leq \exp\left(1-\nu^{D}_{1,p}(\Omega,m)^{\frac{1}{p}}\,r\right),
\end{equation*}
where $m_{\Omega}$ is defined as $(\ref{eq:normalized measure})$.
\end{lem}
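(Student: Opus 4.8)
\textbf{Proof proposal for Lemma \ref{lem:Dirichlet local boundary concentration inequality}.}

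The plan is to imitate the argument given for Proposition \ref{prop:boundary concentration inequality} (and hence for Lemma \ref{lem:local boundary concentration inequality}), observing that nothing in that argument used that $\Omega$ has empty boundary or equals $M$; the only feature that is used is that $\Omega$ is a domain with a well-defined boundary $\partial\Omega$ and that Lipschitz functions vanishing on a neighborhood of $\partial\Omega$ lie in $W^{1,p}_{0}(\Omega,m)$. So the first step is to record the discrete-step inequality
\begin{equation*}
\left(1+\epsilon^{p}\,\nu^{D}_{1,p}(\Omega,m)\right)\,m_{\Omega}\!\left(\Omega\setminus B_{r+\epsilon}(\partial\Omega)\right)\leq m_{\Omega}\!\left(\Omega\setminus B_{r}(\partial\Omega)\right)
\end{equation*}
for all $\epsilon,r>0$. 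To prove this I would set $\Omega_{1}:=B_{r}(\partial\Omega)\cap\Omega$, $\Omega_{2}:=\Omega\setminus B_{r+\epsilon}(\partial\Omega)$, let $v_{\alpha}:=m_{\Omega}(\Omega_{\alpha})$, and use the test function $\varphi(x):=\min\{\epsilon^{-1}d(x,\Omega_{1}),1\}$ restricted to $\Omega$. This $\varphi$ vanishes on $\Omega_{1}$, which contains a neighborhood of $\partial\Omega$ inside $\Omega$, so $\varphi\in W^{1,p}_{0}(\Omega,m)$; one has $\int_{\Omega}|\varphi|^{p}\,dm_{\Omega}\geq v_{2}$ and $\int_{\Omega}\|\nabla\varphi\|^{p}\,dm_{\Omega}\leq \epsilon^{-p}(1-v_{1}-v_{2})$, so the variational characterization $\nu^{D}_{1,p}(\Omega,m)\leq \int\|\nabla\varphi\|^{p}/\int|\varphi|^{p}$ gives $\nu^{D}_{1,p}(\Omega,m)\,\epsilon^{p}\,v_{2}\leq 1-v_{1}-v_{2}$, which rearranges to the claimed inequality since $1-v_{1}\leq 1$.

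The second step is the iteration, identical to the one in Proposition \ref{prop:boundary concentration inequality}. Write $\epsilon_{0}:=\nu^{D}_{1,p}(\Omega,m)^{-1/p}$; it suffices to treat $r\in(0,\epsilon_{0})$, since for $r\geq\epsilon_{0}$ the right-hand side $\exp(1-\nu^{D}_{1,p}(\Omega,m)^{1/p}r)\leq 1$ and there is nothing to prove. Choose the integer $l$ with $\epsilon_{0}r^{-1}\in[(l+1)^{-1},l^{-1})$ and apply the one-step inequality $l$ times with increment $\epsilon=r$ (so that $lr\leq\epsilon_{0}$), using $1+\epsilon_{0}^{-p}r^{p}\nu^{D}_{1,p}(\Omega,m)$-type factors; bounding the product of the factors below by $e^{\,\nu^{D}_{1,p}(\Omega,m)^{1/p}r-1}$ via $1+t\geq e^{t/2}$-style estimates exactly as in \cite{FS} yields $m_{\Omega}(\Omega\setminus B_{r}(\partial\Omega))\leq m_{\Omega}(\Omega\setminus B_{lr}(\partial\Omega))\cdot e^{\,1-\nu^{D}_{1,p}(\Omega,m)^{1/p}r}\leq e^{\,1-\nu^{D}_{1,p}(\Omega,m)^{1/p}r}$.

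Honestly, there is no real obstacle here: the lemma is the same statement as Lemma \ref{lem:local boundary concentration inequality} with the only change being the ambient hypothesis ``$\Omega\subset M$'' replaced by ``$\Omega\subset\inte M$,'' and that change is harmless — indeed it only makes life easier, since now $\partial\Omega$ does not meet $\partial M$ and $W^{1,p}_{0}(\Omega,m)$ is unambiguous. The one point worth a sentence is to confirm that the cut-off $\varphi$ genuinely belongs to $W^{1,p}_{0}(\Omega,m)$ — this is where $\Omega$ being open and $\varphi$ vanishing on an open neighborhood of $\partial\Omega$ in $\overline{\Omega}$ is used. So in the write-up I would simply say: the proof is verbatim that of Proposition \ref{prop:boundary concentration inequality} with $M$ replaced by $\Omega$ and $m$ by $m_{\Omega}$, noting that $\Omega\subset\inte M$ guarantees the test functions lie in $W^{1,p}_{0}(\Omega,m)$, and omit the repetition.
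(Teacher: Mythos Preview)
Your overall approach is exactly the paper's: the authors prove Lemma~\ref{lem:Dirichlet local boundary concentration inequality} simply by pointing to Lemma~\ref{lem:local boundary concentration inequality}/Proposition~\ref{prop:boundary concentration inequality} and saying the argument is identical, and that is precisely your bottom-line proposal. Your derivation of the one-step inequality is correct, and your remark that $\Omega\subset\inte M$ only helps (it makes membership of the cut-off in $W^{1,p}_{0}(\Omega,m)$ unambiguous) is the right observation.

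There is, however, a genuine slip in your recap of the iteration. You have the trivial range backwards: when $r<\epsilon_{0}$ the right-hand side $\exp\bigl(1-\nu^{D}_{1,p}(\Omega,m)^{1/p}r\bigr)$ exceeds $1$ and the inequality is automatic, whereas for $r\geq\epsilon_{0}$ the right-hand side is at most $1$ and there \emph{is} something to prove (``RHS $\leq 1$'' does not trivialize the bound, since the left side is also at most $1$). With the ranges swapped, your description of the iteration (``increment $\epsilon=r$'', ``$lr\leq\epsilon_{0}$'') no longer parses: under $\epsilon_{0}r^{-1}\in[(l+1)^{-1},l^{-1})$ one has $r\in(l\epsilon_{0},(l+1)\epsilon_{0}]$, which forces $l=0$ if $r<\epsilon_{0}$, and then there is nothing to iterate. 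The intended iteration lives in the regime $r\geq\epsilon_{0}$ and uses steps of size $\epsilon_{0}$, as in~\cite{FS}. Since you say you would in the end write ``verbatim as in Proposition~\ref{prop:boundary concentration inequality}'' and omit the details --- which is literally what the paper does --- this does not affect the final text, but make sure you have the direction straight before citing it.
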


\begin{lem}\label{lem:Dirichlet higher order to smallest}
For any pairwise disjoint sequence $\{\Omega_{\alpha}\}^{k}_{\alpha=1}$ of domains in $\inte M$,
we have
\begin{equation*}
\nu^{D}_{k,p}\left(  \bigsqcup^{k}_{\alpha=1} \Omega_{\alpha},m  \right)\leq \max_{\alpha=1,\dots,k} \nu^{D}_{1,p}(\Omega_{\alpha},m).
\end{equation*}
\end{lem}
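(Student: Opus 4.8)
\textbf{Proof plan for Lemma \ref{lem:Dirichlet higher order to smallest}.}
The plan is to imitate, essentially verbatim, the proof of Lemma \ref{lem:higher order to smallest}, since the only structural change is that the index set runs over $\alpha=1,\dots,k$ instead of $\alpha=0,\dots,k$, so we assemble $k$ test functions rather than $k+1$, and we land in the $k$-th Dirichlet Poincar\'e constant rather than the $(k+1)$-th. Concretely, I would fix $\epsilon>0$ and for each $\alpha\in\{1,\dots,k\}$ choose a non-zero $\phi_{\alpha}\in W^{1,p}_{0}(\Omega_{\alpha},m)$ with $\int_{\Omega_{\alpha}}\Vert\nabla\phi_{\alpha}\Vert^{p}\,dm \big/ \int_{\Omega_{\alpha}}\vert\phi_{\alpha}\vert^{p}\,dm < \nu^{D}_{1,p}(\Omega_{\alpha},m)+\epsilon$. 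Setting $\Omega:=\bigsqcup_{\alpha=1}^{k}\Omega_{\alpha}$ and extending each $\phi_{\alpha}$ by zero off $\Omega_{\alpha}$, the $\phi_{\alpha}$ are linearly independent in $W^{1,p}_{0}(\Omega,m)$ because they have pairwise disjoint supports; let $L_{\epsilon}$ be the $k$-dimensional span.

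Next I would extract, for fixed $\delta>0$, a non-zero $\phi=\sum_{\alpha=1}^{k}c_{\alpha}\phi_{\alpha}\in L_{\epsilon}$ nearly realizing $\sup_{\psi\in L_{\epsilon}\setminus\{0\}}\int_{\Omega}\Vert\nabla\psi\Vert^{p}\,dm\big/\int_{\Omega}\vert\psi\vert^{p}\,dm$, and let $I:=\{\alpha\mid c_{\alpha}\neq0\}$. Because the supports are disjoint, the Rayleigh quotient of $\phi$ splits as $\sum_{\alpha\in I}\vert c_{\alpha}\vert^{p}\int_{\Omega_{\alpha}}\Vert\nabla\phi_{\alpha}\Vert^{p}\,dm$ over $\sum_{\alpha\in I}\vert c_{\alpha}\vert^{p}\int_{\Omega_{\alpha}}\vert\phi_{\alpha}\vert^{p}\,dm$; applying the elementary inequality \eqref{eq:elementary inequality} bounds this by $\max_{\alpha\in I}\int_{\Omega_{\alpha}}\Vert\nabla\phi_{\alpha}\Vert^{p}\,dm\big/\int_{\Omega_{\alpha}}\vert\phi_{\alpha}\vert^{p}\,dm$, which by the choice of $\phi_{\alpha}$ is $<\max_{\alpha\in I}\nu^{D}_{1,p}(\Omega_{\alpha},m)+\epsilon \leq \max_{\alpha=1,\dots,k}\nu^{D}_{1,p}(\Omega_{\alpha},m)+\epsilon$. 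Since $\dim L_{\epsilon}=k$, the definition \eqref{eq:local Dirichlet Poincare constant} of $\nu^{D}_{k,p}(\Omega,m)$ gives $\nu^{D}_{k,p}(\Omega,m)\leq \sup_{\psi\in L_{\epsilon}\setminus\{0\}}(\cdots)$, and collecting the estimates yields $\nu^{D}_{k,p}(\Omega,m) < \max_{\alpha=1,\dots,k}\nu^{D}_{1,p}(\Omega_{\alpha},m)+\epsilon+\delta$. Letting $\epsilon,\delta\to0$ finishes the proof.

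I do not anticipate a genuine obstacle here: the argument is a direct transcription of the proof of Lemma \ref{lem:higher order to smallest}. The one point that deserves a line of care is that here $W^{1,p}_{0}(\Omega_{\alpha},m)$ is taken with respect to the \emph{full} boundary $\partial\Omega_{\alpha}$ inside $\inte M$ (so no Neumann part on $\partial M$ can sneak in), which is exactly what is needed for the zero-extensions to lie in $W^{1,p}_{0}(\Omega,m)$; this is automatic because each $\Omega_{\alpha}\subset\inte M$. Everything else — linear independence from disjoint supports, the splitting of the $L^{p}$-norms of $\phi$ and $\nabla\phi$, and the elementary inequality — carries over unchanged.
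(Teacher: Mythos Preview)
Your proposal is correct and follows exactly the approach the paper intends: the paper explicitly states that Lemma \ref{lem:Dirichlet higher order to smallest} can be verified by the same argument as Lemma \ref{lem:higher order to smallest} and omits the proof. The index shift and the observation that $\Omega_{\alpha}\subset\inte M$ ensures the zero-extensions land in $W^{1,p}_{0}(\Omega,m)$ are precisely the only adjustments needed.
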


One can verify Lemmas \ref{lem:Dirichlet domain monotonicity}, \ref{lem:Dirichlet local boundary concentration inequality} and \ref{lem:Dirichlet higher order to smallest}
by the same argument as in the proof of Lemmas \ref{lem:domain monotonicity}, \ref{lem:local boundary concentration inequality} and \ref{lem:higher order to smallest},
respectively.
We omit the proof.

\begin{proof}[Proof of Theorem \ref{thm:main result with boundary}]
We complete the proof by setting $R:=\mathcal{D}^{\partial}\bigl(\{A_{\alpha}\}\bigl)$,
and using Lemmas \ref{lem:Dirichlet domain monotonicity}, \ref{lem:Dirichlet local boundary concentration inequality} and \ref{lem:Dirichlet higher order to smallest}
instead of Lemmas \ref{lem:domain monotonicity}, \ref{lem:local boundary concentration inequality} and \ref{lem:higher order to smallest} along the line of the proof of Theorem \ref{thm:modified main result},
respectively.
\end{proof}

\section{Upper bounds for multi-way isoperimetric constants}\label{sec:Multiway isoperimetric constants and concentration}
In this section,
we study multi-way isoperimetric constants which was introduced by
Miclo \cite{LM} and studied in \cite{DJM,F0,Liu,LGT,LM1}. Multi-way
isoperimetric constants are higher order version of isoperimetric
constants (Cheeger constants) and it is expected
that they possess similar properties of
eigenvalues of the Laplacian. 

\subsection{Closed manifolds}\label{sec:isoperimetric Closed manifolds}
Let $M$ be closed.
For a Borel subset $A \subset M$,
\begin{equation*}
m^{+}(A):=\liminf_{r\to 0}\frac{m(U_{r}(A))-m(A)}{r},
\end{equation*}
where $U_{r}(A)$ is the open $r$-neighborhood of $A$.
The \textit{$k$-way isoperimetric constant} is defined as
\begin{equation*}
\mathcal{I}_{k}(M,m):=\inf_{\{A_{\alpha}\}} \max_{\alpha=0,\dots,k} \frac{m^{+}(A_{\alpha})}{m(A_{\alpha})},
\end{equation*}
where the infimum is taken over all pairwise disjoint sequences $\{A_{\alpha}\}^{k}_{\alpha=0}$ of non-empty Borel subsets $A_{\alpha}\subset M$.
When $k=1$,
this is nothing but the Cheeger constant.
The following relation formally established by Federer-Fleming \cite{FF} (cf. Theorem 9.6 in \cite{Li}, and Lemma 2.1):
\begin{equation}\label{eq:FFequiv}
\mathcal{I}_{1}(M,m) \simeq \nu_{1,1}(M,m).
\end{equation}

For a domain $\Omega \subset M$,
we consider the \textit{local $k$-way Dirichlet isoperimetric constant}
\begin{equation*}
\mathcal{I}^{D}_{k}(\Omega,m):=\inf_{\{A_{\alpha}\}} \max_{\alpha=1,\dots,k} \frac{m^{+}(A_{\alpha})}{m(A_{\alpha})},
\end{equation*}
where the infimum is taken over all pairwise disjoint sequences $\{A_{\alpha}\}^{k}_{\alpha=1}$ of non-empty Borel subsets of $\Omega$.
Due to Federer-Fleming \cite{FF},
we have the following (cf. Theorem 9.5 in \cite{Li}):
\begin{equation}\label{eq:Federer Fleming equation}
\mathcal{I}^{D}_{1}(\Omega,m)=\nu^{D}_{1,1}(\Omega,m),
\end{equation}
where the right hand side is defined as (\ref{eq:local Dirichlet Poincare constant}).

One can verify the following domain monotonicity principle:
\begin{lem}\label{lem:isoperimetric domain monotonicity}
For a domain $\Omega \subset M$,
we have
\begin{equation*}
\mathcal{I}_{k}(M,m) \leq \mathcal{I}^{D}_{k+1}(\Omega,m).
\end{equation*}
\end{lem}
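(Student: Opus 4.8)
The plan is to argue directly from the definitions, by showing that every family competing in the infimum that defines $\mathcal{I}^{D}_{k+1}(\Omega,m)$ is also admissible in the infimum that defines $\mathcal{I}_{k}(M,m)$, and with the same value of the maximized ratio. This is the isoperimetric counterpart of Lemma \ref{lem:domain monotonicity}, where test functions in $W^{1,p}_{0}(\Omega,m)$ are extended by zero to elements of $W^{1,p}(M,m)$.

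First I would fix an arbitrary pairwise disjoint sequence $\{A_{\alpha}\}^{k+1}_{\alpha=1}$ of non-empty Borel subsets of $\Omega$. Since $\Omega\subset M$, these sets form a pairwise disjoint sequence of $k+1$ non-empty Borel subsets of $M$; after relabeling the index set $\{1,\dots,k+1\}$ as $\{0,1,\dots,k\}$, the family $\{A_{\alpha}\}$ is an admissible competitor in the infimum defining $\mathcal{I}_{k}(M,m)$.

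Next I would observe that the Minkowski boundary measure $m^{+}(A_{\alpha})=\liminf_{r\to 0} r^{-1}\bigl(m(U_{r}(A_{\alpha}))-m(A_{\alpha})\bigr)$ depends only on the Borel set $A_{\alpha}$ and on the ambient $(M,m)$: the open $r$-neighborhood $U_{r}(A_{\alpha})$ is taken in $M$, so $m^{+}(A_{\alpha})$ takes the same value whether $A_{\alpha}$ is regarded as a subset of $\Omega$ or of $M$. Hence $\max_{\alpha} m^{+}(A_{\alpha})/m(A_{\alpha})$ is the same number in both settings, and by the defining property of the infimum we get $\mathcal{I}_{k}(M,m)\leq \max_{\alpha} m^{+}(A_{\alpha})/m(A_{\alpha})$. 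Taking the infimum of the right-hand side over all pairwise disjoint sequences $\{A_{\alpha}\}^{k+1}_{\alpha=1}$ of non-empty Borel subsets of $\Omega$ yields $\mathcal{I}_{k}(M,m)\leq \mathcal{I}^{D}_{k+1}(\Omega,m)$.

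There is essentially no obstacle here; the one point that deserves care is the convention that $m^{+}$ is computed with neighborhoods in the ambient $M$ rather than inside $\Omega$. This is exactly what keeps the portion of $\partial A_{\alpha}$ lying on $\partial\Omega$ from being discarded, and it is what makes the monotonicity come out in this direction, in parallel with the zero-extension used in Lemma \ref{lem:domain monotonicity}.
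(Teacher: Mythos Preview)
Your argument is correct and is exactly the direct-from-definitions verification the paper has in mind; the paper states this lemma without proof, noting only that ``one can verify'' it, and your observation that $m^{+}$ is computed with neighborhoods in the ambient $M$ is precisely the point that makes the competitor families and their ratios transfer unchanged from $\Omega$ to $M$.
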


The following boundary concentration inequality follows from Lemma \ref{lem:local boundary concentration inequality} with $p=1$,
and (\ref{eq:Federer Fleming equation}):
\begin{lem}\label{lem:isoperimetric local boundary concentration inequality}
Let $\Omega \subset M$ be a domain.
For all $r>0$,
we have
\begin{equation*}
m_{\Omega}\left(\Omega  \setminus B_{r}(\partial \Omega)\right) \leq \exp\left(1-\mathcal{I}^{D}_{1}(\Omega,m)\,r\right),
\end{equation*}
where $m_{\Omega}$ is defined as $(\ref{eq:normalized measure})$.
\end{lem}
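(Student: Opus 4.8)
The plan is to obtain the statement as a one-line corollary of the local boundary concentration inequality already in hand, combined with the Federer--Fleming identity. First I would apply Lemma \ref{lem:local boundary concentration inequality} to the same domain $\Omega$ with the exponent $p=1$. The proof of that lemma (modeled on Proposition \ref{prop:boundary concentration inequality}) uses nothing beyond the variational definition (\ref{eq:local Dirichlet Poincare constant}) of $\nu^{D}_{1,p}(\Omega,m)$ together with a single Lipschitz test function built from the distance to a sub-neighborhood, and none of these ingredients degenerates at the endpoint $p=1$; so the lemma is legitimately available for $p=1$ and gives, for every $r>0$,
\[
m_{\Omega}\left(\Omega  \setminus B_{r}(\partial \Omega)\right) \leq \exp\left(1-\nu^{D}_{1,1}(\Omega,m)\,r\right).
\]

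Next I would invoke the Federer--Fleming relation (\ref{eq:Federer Fleming equation}), namely $\mathcal{I}^{D}_{1}(\Omega,m)=\nu^{D}_{1,1}(\Omega,m)$, and substitute it into the exponent above. This yields exactly
\[
m_{\Omega}\left(\Omega  \setminus B_{r}(\partial \Omega)\right) \leq \exp\left(1-\mathcal{I}^{D}_{1}(\Omega,m)\,r\right),
\]
which is the assertion.

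There is essentially no genuine obstacle here; the only thing worth checking is that (\ref{eq:Federer Fleming equation}) and Lemma \ref{lem:local boundary concentration inequality} are being applied to a common object: the former is the coarea/truncation identity of Federer--Fleming for a domain regarded as a manifold with boundary, and the latter is phrased for domains $\Omega \subset M$ in the present (closed $M$) setting, so both apply to the $\Omega$ at hand. Granting these two inputs, the proof is complete without any further estimation.
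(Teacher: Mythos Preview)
Your proposal is correct and matches the paper's own argument exactly: the paper states that this lemma ``follows from Lemma \ref{lem:local boundary concentration inequality} with $p=1$, and (\ref{eq:Federer Fleming equation}),'' which is precisely what you do.
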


By straightforward argument,
we also have the following:
\begin{lem}\label{lem:isoperimetric higher order to smallest}
For any pairwise disjoint sequence $\{\Omega_{\alpha}\}^{k}_{\alpha=0}$ of domains in $M$,
we have
\begin{equation*}
\mathcal{I}^{D}_{k+1}\left(  \bigsqcup^{k}_{\alpha=0} \Omega_{\alpha} ,m \right)\leq \max_{\alpha=0,\dots,k} \mathcal{I}^{D}_{1}(\Omega_{\alpha},m).
\end{equation*}
\end{lem}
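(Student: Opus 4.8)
The plan is to mimic the domain-decomposing argument of Lemma \ref{lem:higher order to smallest}, but now in the isoperimetric (Cheeger) setting with $p=1$ replaced by the surface-measure functional $m^{+}$. First I would unwind the definition: given the pairwise disjoint domains $\{\Omega_{\alpha}\}^{k}_{\alpha=0}$, set $\Omega:=\bigsqcup^{k}_{\alpha=0}\Omega_{\alpha}$ and, for each $\alpha$, fix $\epsilon>0$ and choose a non-empty Borel subset $B_{\alpha}\subset\Omega_{\alpha}$ that nearly realizes $\mathcal{I}^{D}_{1}(\Omega_{\alpha},m)$, i.e.
\begin{equation*}
\frac{m^{+}(B_{\alpha})}{m(B_{\alpha})}<\mathcal{I}^{D}_{1}(\Omega_{\alpha},m)+\epsilon.
\end{equation*}
The candidate admissible family for the infimum defining $\mathcal{I}^{D}_{k+1}(\Omega,m)$ is then precisely $\{B_{\alpha}\}^{k}_{\alpha=0}$: these sets are pairwise disjoint (since the $\Omega_{\alpha}$ are), non-empty, and each $B_{\alpha}\subset\Omega$.

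The key step is the additivity of the perimeter functional across the disjoint pieces. Because each $\Omega_{\alpha}$ is open and the $\Omega_{\alpha}$ are pairwise disjoint, for $r$ small enough the open $r$-neighborhood $U_{r}(B_{\alpha})$ is still contained in $\Omega_{\alpha}$, so the neighborhoods of distinct $B_{\alpha}$ do not interact; taking $\liminf_{r\to 0}$ of $(m(U_{r}(B_{\alpha}))-m(B_{\alpha}))/r$ is unaffected by the ambient set being $\Omega$ rather than $\Omega_{\alpha}$. Hence $m^{+}_{\Omega}(B_{\alpha})=m^{+}_{\Omega_{\alpha}}(B_{\alpha})$ for each $\alpha$, and therefore
\begin{equation*}
\mathcal{I}^{D}_{k+1}(\Omega,m)\leq\max_{\alpha=0,\dots,k}\frac{m^{+}(B_{\alpha})}{m(B_{\alpha})}<\max_{\alpha=0,\dots,k}\mathcal{I}^{D}_{1}(\Omega_{\alpha},m)+\epsilon.
\end{equation*}
Letting $\epsilon\to 0$ gives the claim. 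Note this is actually cleaner than Lemma \ref{lem:higher order to smallest}: there we needed the elementary inequality (\ref{eq:elementary inequality}) to handle an arbitrary linear combination of the $\phi_{\alpha}$, whereas for isoperimetric constants the competitor is just a single set living in a single $\Omega_{\alpha}$, so no such combinatorial lemma is needed.

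I expect the only genuine subtlety — and the step I would write out carefully — is the claim $m^{+}_{\Omega}(B_{\alpha})=m^{+}_{\Omega_{\alpha}}(B_{\alpha})$, i.e. that restricting the ambient space to $\Omega$ versus $\Omega_{\alpha}$ does not change the relative boundary measure of $B_{\alpha}$. Since $\Omega_{\alpha}$ is open and $B_{\alpha}\subset\Omega_{\alpha}$, there is $\rho>0$ with $\overline{U_{\rho}(B_{\alpha})}\subset\Omega_{\alpha}$ (using that $B_{\alpha}$, if not already taken with compact closure, can be replaced by a slightly shrunk version costing only $\epsilon$); then for $r<\rho$ the quantities $m(U_{r}(B_{\alpha}))$ computed in $\Omega$ and in $\Omega_{\alpha}$ coincide, so the $\liminf$ defining $m^{+}$ is the same. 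One should also check the trivial point that $\{B_{\alpha}\}^{k}_{\alpha=0}$ is a legitimate competitor for $\mathcal{I}^{D}_{k+1}(\Omega,m)$, namely that it is a pairwise disjoint family of $k+1$ non-empty Borel subsets of $\Omega$ — immediate from disjointness of the $\Omega_{\alpha}$. With these two observations in place the proof is complete, and indeed the paper is entitled to say "by straightforward argument."
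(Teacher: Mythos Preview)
Your argument is correct and is exactly the ``straightforward argument'' the paper alludes to (the paper gives no proof beyond that phrase): pick near-optimal $B_{\alpha}\subset\Omega_{\alpha}$ for each $\mathcal{I}^{D}_{1}(\Omega_{\alpha},m)$, observe that $\{B_{\alpha}\}^{k}_{\alpha=0}$ is an admissible competitor for $\mathcal{I}^{D}_{k+1}(\Omega,m)$, and let $\epsilon\to 0$.

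One remark: the ``genuine subtlety'' you flag is not actually present. In the paper the boundary measure $m^{+}(A)$ is defined once and for all via neighborhoods $U_{r}(A)$ in the ambient manifold $M$, independently of which domain $\Omega$ is under consideration; there is no relative quantity $m^{+}_{\Omega}$ in play. Hence the ratio $m^{+}(B_{\alpha})/m(B_{\alpha})$ is literally the same number whether it appears in the definition of $\mathcal{I}^{D}_{1}(\Omega_{\alpha},m)$ or of $\mathcal{I}^{D}_{k+1}(\Omega,m)$, and no argument about small neighborhoods staying inside $\Omega_{\alpha}$ is needed. Your observation that this lemma is cleaner than Lemma~\ref{lem:higher order to smallest} (no analogue of~(\ref{eq:elementary inequality}) required) is apt.
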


We can show the following assertion by using Lemmas \ref{lem:isoperimetric domain monotonicity}, \ref{lem:isoperimetric local boundary concentration inequality} and \ref{lem:isoperimetric higher order to smallest}
along the line of the proof of Theorem \ref{thm:modified main result} instead of Lemmas \ref{lem:domain monotonicity}, \ref{lem:local boundary concentration inequality} and \ref{lem:higher order to smallest},
respectively.
The proof is left to the reader.
\begin{thm}\label{thm:isoperimetric main result}
Let $M=(M,m)$ be a closed weighted Riemannian manifold.
For any sequence $\{\Omega_{\alpha}\}^{k}_{\alpha=0}$ of Borel subsets of $M$,
we have
\begin{equation}\label{eq:isoperimetric main result}
\mathcal{I}_{k}(M,m)\leq \frac{2}{ \mathcal{D}(\{A_{\alpha}\})}\max_{\alpha=0,\dots,k}  \log \frac{e(1-\sum_{\beta\neq \alpha}m(A_{\beta})    )}{m(A_{\alpha})}.
\end{equation}
\end{thm}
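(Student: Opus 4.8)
The plan is to mimic exactly the proof of Theorem \ref{thm:modified main result}, replacing the three principles for Poincar\'e constants by their isoperimetric counterparts. Set $R:=\mathcal{D}(\{A_{\alpha}\})$, fix $r\in(0,R/2)$, and let $\{\Omega_{\alpha}\}_{\alpha=0}^{k}$ be the open $r$-neighborhoods of the $A_{\alpha}$; since $r<R/2$ these are pairwise disjoint domains in $M$. Lemmas \ref{lem:isoperimetric domain monotonicity} and \ref{lem:isoperimetric higher order to smallest} then give
\begin{equation*}
\mathcal{I}_{k}(M,m)\leq \mathcal{I}^{D}_{k+1}\Bigl(\bigsqcup_{\alpha=0}^{k}\Omega_{\alpha},m\Bigr)\leq \max_{\alpha}\mathcal{I}^{D}_{1}(\Omega_{\alpha},m),
\end{equation*}
which is the isoperimetric analogue of \eqref{eq:domain monotonicity and higher order to smallest}.

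Next, choose $\alpha_{0}$ so that $\mathcal{I}^{D}_{1}(\Omega_{\alpha_{0}},m)=\max_{\alpha}\mathcal{I}^{D}_{1}(\Omega_{\alpha},m)$, and for fixed $\epsilon>0$ set
\begin{equation*}
r_{0}:=\frac{1}{\mathcal{I}^{D}_{1}(\Omega_{\alpha_{0}},m)}\log\frac{e\,(1-\sum_{\alpha\neq\alpha_{0}}m(A_{\alpha}))}{m(A_{\alpha_{0}})}+\epsilon.
\end{equation*}
As before, $m(\Omega_{\alpha_{0}})\leq 1-\sum_{\alpha\neq\alpha_{0}}m(\Omega_{\alpha})\leq 1-\sum_{\alpha\neq\alpha_{0}}m(A_{\alpha})$, so the normalized measure satisfies $m_{\Omega_{\alpha_{0}}}(A_{\alpha_{0}})\geq m(A_{\alpha_{0}})/(1-\sum_{\alpha\neq\alpha_{0}}m(A_{\alpha}))$, which strictly exceeds $\exp(1-\mathcal{I}^{D}_{1}(\Omega_{\alpha_{0}},m)\,r_{0})$ by the choice of $r_{0}$. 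Lemma \ref{lem:isoperimetric local boundary concentration inequality} then forces $m_{\Omega_{\alpha_{0}}}(A_{\alpha_{0}})>m_{\Omega_{\alpha_{0}}}(\Omega_{\alpha_{0}}\setminus B_{r_{0}}(\partial\Omega_{\alpha_{0}}))$, hence $A_{\alpha_{0}}\cap B_{r_{0}}(\partial\Omega_{\alpha_{0}})\neq\emptyset$; since every point of $A_{\alpha_{0}}$ is at distance $\geq r$ from $\partial\Omega_{\alpha_{0}}$ (as $\Omega_{\alpha_{0}}$ is the open $r$-neighborhood of $A_{\alpha_{0}}$), this gives $r\leq r_{0}$.

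Finally, letting $\epsilon\to 0$ and $r\to R/2$ yields
\begin{equation*}
R\leq \frac{2}{\mathcal{I}^{D}_{1}(\Omega_{\alpha_{0}},m)}\log\frac{e\,(1-\sum_{\alpha\neq\alpha_{0}}m(A_{\alpha}))}{m(A_{\alpha_{0}})}\leq \frac{2}{\mathcal{I}_{k}(M,m)}\max_{\alpha=0,\dots,k}\log\frac{e\,(1-\sum_{\beta\neq\alpha}m(A_{\beta}))}{m(A_{\alpha})},
\end{equation*}
which rearranges to \eqref{eq:isoperimetric main result}. I do not anticipate a genuine obstacle here: the argument is structurally identical to that of Theorem \ref{thm:modified main result}, and the only points deserving a moment's care are verifying that the open $r$-neighborhoods are honest domains (nonempty, open, connected components can be handled as in the $p$-Poincar\'e case) and that the isoperimetric lemmas \ref{lem:isoperimetric domain monotonicity}--\ref{lem:isoperimetric higher order to smallest} are being invoked with the correct index shift ($k$ on the closed manifold versus $k+1$ on the disjoint union). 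The substantive content has already been offloaded into those three lemmas and into the Federer--Fleming identity \eqref{eq:Federer Fleming equation}.
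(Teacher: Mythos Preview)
Your proposal is correct and follows exactly the approach the paper intends: the paper explicitly states that Theorem \ref{thm:isoperimetric main result} is proved ``by using Lemmas \ref{lem:isoperimetric domain monotonicity}, \ref{lem:isoperimetric local boundary concentration inequality} and \ref{lem:isoperimetric higher order to smallest} along the line of the proof of Theorem \ref{thm:modified main result},'' and leaves the details to the reader. Your write-up carries out precisely this substitution, with the correct index shift, and even supplies a slightly more explicit justification of the step $r\leq r_{0}$ than the original proof does.
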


\begin{rem}
The first author \cite{F0} stated the following inequality to (\ref{eq:isoperimetric main result}) (see Subsection 2.2 in \cite{F0}):
Under the same setting as in Theorem \ref{thm:isoperimetric main result},
it holds that
\begin{equation}\label{eq:LGT and CGY}
\mathcal{I}_{k}(M,m)\lesssim \frac{k^{3}}{   \mathcal{D}(\{A_{\alpha}\}) }  \max_{\alpha\neq \beta}  \log \frac{e}{m(A_{\alpha})m(A_{\beta})},
\end{equation}
here $C_{1} \lesssim C_{2}$ means that
$C_{1}\leq C\,C_{2}$ for some universal explicit constant $C>0$.
In \cite{F0},
he first pointed out that
the higher-order Cheeger inequality in the graph setting established by Lee-Gharan-Trevisan \cite{LGT} can be extended to the closed manifold setting by an appropriate modification of their proof (see Theorem 3.8 in \cite{LGT}, and Theorem 1.4 in \cite{F0}).
He concluded (\ref{eq:LGT and CGY}) by combining the higher-order
 Cheeger inequality with the inequality (\ref{eq:CGY}) of
 Chung-Grigor'yan-Yau \cite{CGY2}. Note that $k^3$ does not appear in
 (\ref{eq:isoperimetric main result}) and hence better than (\ref{eq:LGT and CGY}).
\end{rem}

\subsection{Manifolds with boundary}\label{sec:isoperimetric manifolds with boundary}
We next consider the case where
$M$ is a compact manifold with boundary.
The \textit{$k$-way Dirichlet isoperimetric constant} is defined as
\begin{equation*}
\mathcal{I}^{D}_{k}(M,m):=\inf_{\{A_{\alpha}\}} \max_{\alpha=1,\dots,k} \frac{m^{+}(A_{\alpha})}{m(A_{\alpha})},
\end{equation*}
where the infimum is taken over all pairwise disjoint sequences $\{A_{\alpha}\}^{k}_{\alpha=1}$ of non-empty Borel subsets $A_{\alpha}\subset \inte M$.
Due to Federer-Fleming \cite{FF},
\begin{equation*}
\mathcal{I}^{D}_{1}(M,m)=\nu^{D}_{1,1}(M,m).
\end{equation*}

Similarly to the case where $M$ is closed,
we have the following:
\begin{lem}\label{lem:Dirichlet isoperimetric domain monotonicity}
For a domain $\Omega \subset \inte M$,
we have
\begin{equation*}
\mathcal{I}^{D}_{k}(M,m) \leq \mathcal{I}^{D}_{k}(\Omega,m).
\end{equation*}
\end{lem}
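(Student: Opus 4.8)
The plan is to prove this by a direct comparison of the two infima, in exactly the same way one proves Lemma~\ref{lem:Dirichlet domain monotonicity} and Lemma~\ref{lem:isoperimetric domain monotonicity}: passing to a smaller domain only shrinks the family of admissible configurations, so the infimum cannot decrease.

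Concretely, I would fix a domain $\Omega\subset\inte M$ and let $\{A_{\alpha}\}_{\alpha=1}^{k}$ be an arbitrary pairwise disjoint sequence of non-empty Borel subsets of $\Omega$, i.e.\ an arbitrary competitor in the infimum defining $\mathcal{I}^{D}_{k}(\Omega,m)$. Since $\Omega\subset\inte M$, this same sequence is a pairwise disjoint sequence of non-empty Borel subsets of $\inte M$, hence it is also admissible in the infimum defining $\mathcal{I}^{D}_{k}(M,m)$.

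The next step is to observe that the two quantities entering the functional, namely $m(A_{\alpha})$ and the Minkowski boundary measure $m^{+}(A_{\alpha})=\liminf_{r\to 0}r^{-1}\bigl(m(U_{r}(A_{\alpha}))-m(A_{\alpha})\bigr)$, depend only on the fixed ambient data $(M,g,m)$ and on the set $A_{\alpha}$ itself; in particular the open $r$-neighborhood $U_{r}(A_{\alpha})$ is taken in $M$ and is unaffected by the choice of $\Omega$ (and since $m(\partial M)=0$ it is immaterial whether one neighbors in $M$ or in $\inte M$). Hence
\[
\mathcal{I}^{D}_{k}(M,m)\;\le\;\max_{\alpha=1,\dots,k}\frac{m^{+}(A_{\alpha})}{m(A_{\alpha})},
\]
and taking the infimum of the right-hand side over all admissible $\{A_{\alpha}\}$ contained in $\Omega$ gives $\mathcal{I}^{D}_{k}(M,m)\le\mathcal{I}^{D}_{k}(\Omega,m)$, uniformly in $k$.

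There is no real obstacle here; the one point that deserves a moment's attention is the convention for $m^{+}$: one must read it as an intrinsic quantity of a Borel subset of the fixed manifold $M$, so that restricting to a subdomain does not change it, which is the convention in force throughout the paper. With that understood, the inclusion of admissible classes immediately yields the inequality.
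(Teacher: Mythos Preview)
Your argument is correct and is precisely the approach the paper intends: the paper states this lemma without proof, indicating only that it holds ``similarly to the case where $M$ is closed,'' where in turn the analogous Lemma~\ref{lem:isoperimetric domain monotonicity} is said to follow directly from the definition. Your observation that the admissible class for $\mathcal{I}^{D}_{k}(\Omega,m)$ is contained in that for $\mathcal{I}^{D}_{k}(M,m)$, together with the fact that $m$ and $m^{+}$ are computed in the fixed ambient manifold, is exactly this unwinding of the definition.
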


\begin{lem}\label{lem:Dirichlet isoperimetric local boundary concentration inequality}
Let $\Omega \subset \inte M$ be a domain.
For all $r>0$,
we have
\begin{equation*}
m_{\Omega}\left(\Omega  \setminus B_{r}(\partial \Omega)\right) \leq \exp\left(1-\mathcal{I}^{D}_{1}(\Omega,m)\,r\right).
\end{equation*}
\end{lem}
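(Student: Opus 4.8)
The plan is to obtain this exactly in parallel with Lemma \ref{lem:isoperimetric local boundary concentration inequality} in the closed case: combine the general-$p$ boundary concentration inequality for domains in $\inte M$ with the Federer--Fleming identification of the first Dirichlet $1$-Poincar\'e constant with the first Dirichlet isoperimetric constant, and then specialize $p$ to $1$.

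Concretely, first I would invoke the equality (\ref{eq:Federer Fleming equation}), namely $\mathcal{I}^{D}_{1}(\Omega,m)=\nu^{D}_{1,1}(\Omega,m)$. This is a purely local statement about the domain $\Omega$ (proved by approximating characteristic functions of Borel subsets of $\Omega$ by Lipschitz functions in $W^{1,1}_{0}(\Omega,m)$ for one inequality, and by the coarea formula for the other), so it applies equally well to a domain $\Omega\subset\inte M$, the ambient manifold being irrelevant. Next I would apply Lemma \ref{lem:Dirichlet local boundary concentration inequality} with $p=1$, which gives, for every $r>0$,
\[
m_{\Omega}\left(\Omega\setminus B_{r}(\partial\Omega)\right)\leq\exp\left(1-\nu^{D}_{1,1}(\Omega,m)\,r\right);
\]
substituting $\nu^{D}_{1,1}(\Omega,m)=\mathcal{I}^{D}_{1}(\Omega,m)$ finishes the proof.

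I do not expect any real obstacle here; the only point worth checking is that (\ref{eq:Federer Fleming equation}) is genuinely available for domains lying in $\inte M$ and not only for domains of a closed manifold, which is immediate by locality. As a self-contained alternative one may bypass Lemma \ref{lem:Dirichlet local boundary concentration inequality} and repeat the argument of Proposition \ref{prop:boundary concentration inequality} in the present setting: with $\Omega_{1}:=B_{r}(\partial\Omega)\cap\Omega$, $\Omega_{2}:=\Omega\setminus B_{r+\epsilon}(\partial\Omega)$ and the truncated distance function $\varphi(x):=\min\{\epsilon^{-1}d(x,\Omega_{1}),1\}$, using $\varphi$ as a competitor in the variational characterization (\ref{eq:Federer Fleming equation}) of $\mathcal{I}^{D}_{1}(\Omega,m)$ yields the one-step decay estimate $(1+\epsilon\,\mathcal{I}^{D}_{1}(\Omega,m))\,m_{\Omega}(\Omega_{2})\leq m_{\Omega}(\Omega\setminus\Omega_{1})$, and iterating it along a partition of $[0,r]$ of mesh comparable to $\mathcal{I}^{D}_{1}(\Omega,m)^{-1}$ produces the claimed exponential bound.
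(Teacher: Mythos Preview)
Your proposal is correct and matches the paper's approach: the paper states Lemma \ref{lem:Dirichlet isoperimetric local boundary concentration inequality} as the boundary analogue of Lemma \ref{lem:isoperimetric local boundary concentration inequality}, which it proves precisely by applying the general-$p$ boundary concentration inequality with $p=1$ together with the Federer--Fleming identity (\ref{eq:Federer Fleming equation}). Your remark that (\ref{eq:Federer Fleming equation}) is local and hence available for $\Omega\subset\inte M$ is exactly the point needed to carry the argument over, and your alternative self-contained iteration is just an unpacking of the proof of Lemma \ref{lem:Dirichlet local boundary concentration inequality}.
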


\begin{lem}\label{lem:Dirichlet isoperimetric higher order to smallest}
For any pairwise disjoint sequence $\{\Omega_{\alpha}\}^{k}_{\alpha=1}$ of domains in $\inte M$,
we have
\begin{equation*}
\mathcal{I}^{D}_{k}\left(  \bigsqcup^{k}_{\alpha=1} \Omega_{\alpha},m  \right)\leq \max_{\alpha=1,\dots,k} \mathcal{I}^{D}_{1}(\Omega_{\alpha},m).
\end{equation*}
\end{lem}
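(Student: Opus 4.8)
The plan is to mimic the structure of the proof of Lemma \ref{lem:higher order to smallest}, adapting it to the isoperimetric (rather than Poincaré) setting and to the index range $\alpha=1,\dots,k$. Set $\Omega:=\bigsqcup^{k}_{\alpha=1}\Omega_{\alpha}$. Fix $\epsilon>0$, and for each $\alpha$ choose a pairwise disjoint sequence $\{A_{\alpha,\beta}\}^{k}_{\beta=1}$ of non-empty Borel subsets of $\Omega_{\alpha}$ that nearly realizes $\mathcal{I}^{D}_{1}(\Omega_{\alpha},m)$; more precisely, since $\mathcal{I}^{D}_{1}$ is an infimum of the single quantity $m^{+}(A)/m(A)$ over non-empty Borel $A\subset\Omega_{\alpha}$ (there is effectively just one set when $k=1$), pick a single non-empty Borel set $A_{\alpha}\subset\Omega_{\alpha}$ with $m^{+}(A_{\alpha})/m(A_{\alpha})<\mathcal{I}^{D}_{1}(\Omega_{\alpha},m)+\epsilon$. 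Now one needs a $k$-way family inside $\Omega$; the natural choice is to keep the $\Omega_{\alpha}$ themselves as candidate sets, but that only gives $k$ sets indexed $\alpha=1,\dots,k$, which is exactly what the $k$-way constant requires.

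The key step is then the following: the family $\{A_{\alpha}\}^{k}_{\alpha=1}$ is pairwise disjoint (as the $\Omega_{\alpha}$ are), each $A_{\alpha}$ is a non-empty Borel subset of $\Omega$, and because $A_{\alpha}\subset\Omega_{\alpha}$ and the $\Omega_{\alpha}$ are disjoint open sets, the relative boundary quantity computed in $\Omega$ agrees with that computed in $\Omega_{\alpha}$: for small $r$ the $r$-neighborhood $U_{r}(A_{\alpha})$ stays inside $\Omega_{\alpha}$, so $m^{+}_{\Omega}(A_{\alpha})=m^{+}_{\Omega_{\alpha}}(A_{\alpha})=m^{+}(A_{\alpha})$ (the perimeter is a local quantity, so the ambient domain is irrelevant once neighborhoods are small). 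Hence
\begin{equation*}
\mathcal{I}^{D}_{k}(\Omega,m)\leq \max_{\alpha=1,\dots,k}\frac{m^{+}(A_{\alpha})}{m(A_{\alpha})}<\max_{\alpha=1,\dots,k}\mathcal{I}^{D}_{1}(\Omega_{\alpha},m)+\epsilon,
\end{equation*}
and letting $\epsilon\to 0$ gives the claim. Note that, unlike the Poincaré case, there is no need for the elementary ratio inequality \eqref{eq:elementary inequality} or for a spanning-subspace argument: the $k$-way isoperimetric constant is itself a ``max over the family'' quantity, so producing one good family immediately produces the bound.

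The main obstacle — really the only subtle point — is the locality claim $m^{+}_{\Omega}(A_{\alpha})=m^{+}(A_{\alpha})$, i.e. checking that the Minkowski-content perimeter of $A_{\alpha}$ is unaffected by whether we regard $A_{\alpha}$ as sitting in $\Omega_{\alpha}$ or in the larger disjoint union $\Omega$. This is immediate because $A_{\alpha}$ is a positive distance from $\Omega\setminus\Omega_{\alpha}$ (the $\Omega_{\beta}$, $\beta\neq\alpha$, being disjoint from the open set $\Omega_{\alpha}$, and $\overline{A_{\alpha}}\subset\Omega_{\alpha}$ since $A_{\alpha}$ is Borel with $U_r(A_\alpha)\subset\Omega_\alpha$ for small $r$ — if this last inclusion fails one simply restricts attention to $A_\alpha\cap\Omega_\alpha$ and uses that the infimum defining $m^+$ is a $\liminf$ as $r\to 0$), so $U_{r}(A_{\alpha})\subset\Omega_{\alpha}$ for all sufficiently small $r>0$ and the defining $\liminf$ is literally the same. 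Everything else is the same bookkeeping as in Lemma \ref{lem:higher order to smallest}, only shorter, so the proof can indeed be safely left to the reader as the authors indicate.
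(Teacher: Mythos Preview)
Your proof is correct and is exactly the straightforward argument the paper has in mind (no proof is given in the paper; the closed-case analogue, Lemma \ref{lem:isoperimetric higher order to smallest}, is also stated without proof as following ``by straightforward argument''). One simplification: in this paper $m^{+}(A)$ is defined globally on $M$ via the Minkowski content $\liminf_{r\to 0}\bigl(m(U_{r}(A))-m(A)\bigr)/r$, independent of any ambient domain $\Omega$, so your locality discussion is unnecessary---the number $m^{+}(A_{\alpha})$ is literally the same whether $A_{\alpha}$ is regarded as a subset of $\Omega_{\alpha}$, of $\Omega$, or of $M$.
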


The above lemmas imply the following:
\begin{thm}\label{thm:Dirichlet isoperimetric main result}
Let $M=(M,m)$ be a compact weighted Riemannian manifold with boundary.
For any sequence $\{A_{\alpha}\}_{\alpha=1}^k$ of Borel subsets of $M$,
we have
\begin{equation*}
\mathcal{I}^{D}_{k}(M,m) \leq \frac{2}{ \mathcal{D}^{\partial}(\{ A_{\alpha}\})  }\max_{\alpha=1,\dots,k}  \log \frac{e(1-\sum_{\beta\neq \alpha}m(A_{\beta})    )}{m(A_{\alpha})}.
\end{equation*}
\end{thm}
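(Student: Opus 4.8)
The plan is to mimic the proof of Theorem~\ref{thm:modified main result} verbatim, replacing the three closed-manifold ingredients (Lemmas~\ref{lem:domain monotonicity}, \ref{lem:local boundary concentration inequality}, \ref{lem:higher order to smallest}) by their Dirichlet counterparts (Lemmas~\ref{lem:Dirichlet isoperimetric domain monotonicity}, \ref{lem:Dirichlet isoperimetric local boundary concentration inequality}, \ref{lem:Dirichlet isoperimetric higher order to smallest}), and tracking the extra condition $\min_\alpha d(A_\alpha,\partial M)$ that enters through $\mathcal D^\partial$. Set $R:=\mathcal D^\partial(\{A_\alpha\})$. For fixed $r\in(0,R/2)$, let $\Omega_\alpha$ be the open $r$-neighborhood of $A_\alpha$ in $M$, for $\alpha=1,\dots,k$. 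Because $r<R/2\le \tfrac12\min_\alpha d(A_\alpha,\partial M)$, each $\Omega_\alpha$ is a domain contained in $\inte M$, and because $r<R/2\le\tfrac12\min_{\alpha\neq\beta}d(A_\alpha,A_\beta)$, the $\Omega_\alpha$ are pairwise disjoint. Thus Lemmas~\ref{lem:Dirichlet isoperimetric domain monotonicity} and \ref{lem:Dirichlet isoperimetric higher order to smallest} give
\begin{equation*}
\mathcal{I}^{D}_{k}(M,m)\leq \mathcal{I}^{D}_{k}\Bigl(\bigsqcup_{\alpha=1}^{k}\Omega_{\alpha},m\Bigr)\leq \max_{\alpha=1,\dots,k}\mathcal{I}^{D}_{1}(\Omega_{\alpha},m).
\end{equation*}

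Next, fix $\epsilon>0$, choose $\alpha_0$ attaining $\mathcal{I}^{D}_{1}(\Omega_{\alpha_0},m)=\max_\alpha \mathcal{I}^{D}_{1}(\Omega_{\alpha},m)$, and set
\begin{equation*}
r_0:=\frac{1}{\mathcal{I}^{D}_{1}(\Omega_{\alpha_0},m)}\log\frac{e\bigl(1-\sum_{\alpha\neq\alpha_0}m(A_\alpha)\bigr)}{m(A_{\alpha_0})}+\epsilon,
\end{equation*}
so that $m(A_{\alpha_0})/\bigl(1-\sum_{\alpha\neq\alpha_0}m(A_\alpha)\bigr)>\exp(1-\mathcal{I}^{D}_{1}(\Omega_{\alpha_0},m)\,r_0)$. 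Exactly as in the proof of Theorem~\ref{thm:modified main result}, since the $\Omega_\alpha$ are pairwise disjoint and $A_\alpha\subset\Omega_\alpha$ we get $m(\Omega_{\alpha_0})\le 1-\sum_{\alpha\neq\alpha_0}m(\Omega_\alpha)\le 1-\sum_{\alpha\neq\alpha_0}m(A_\alpha)$, whence
\begin{equation*}
m_{\Omega_{\alpha_0}}(A_{\alpha_0})=\frac{m(A_{\alpha_0})}{m(\Omega_{\alpha_0})}\geq\frac{m(A_{\alpha_0})}{1-\sum_{\alpha\neq\alpha_0}m(A_\alpha)}>\exp\bigl(1-\mathcal{I}^{D}_{1}(\Omega_{\alpha_0},m)\,r_0\bigr).
\end{equation*}
By the boundary concentration inequality, Lemma~\ref{lem:Dirichlet isoperimetric local boundary concentration inequality}, the right-hand side is at least $m_{\Omega_{\alpha_0}}(\Omega_{\alpha_0}\setminus B_{r_0}(\partial\Omega_{\alpha_0}))$, so $m_{\Omega_{\alpha_0}}(A_{\alpha_0})>m_{\Omega_{\alpha_0}}(\Omega_{\alpha_0}\setminus B_{r_0}(\partial\Omega_{\alpha_0}))$. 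Hence $B_{r_0}(\partial\Omega_{\alpha_0})\cap A_{\alpha_0}\neq\emptyset$; but $A_{\alpha_0}$ has distance exactly $r$ from $\partial\Omega_{\alpha_0}$, so $r\le r_0$. Letting $\epsilon\to0$ and $r\to R/2$ yields $R\le \tfrac{2}{\mathcal{I}^{D}_{1}(\Omega_{\alpha_0},m)}\log\frac{e(1-\sum_{\alpha\neq\alpha_0}m(A_\alpha))}{m(A_{\alpha_0})}$, and combining with the displayed monotonicity/decomposition bound above gives the claimed inequality.

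The main point to get right — and essentially the only place where this argument differs from the closed case — is the role of $\min_\alpha d(A_\alpha,\partial M)$ in $\mathcal D^\partial$: it is precisely what guarantees that the neighborhoods $\Omega_\alpha$ stay inside $\inte M$, so that the \emph{interior-domain} versions of the three lemmas (Lemmas~\ref{lem:Dirichlet isoperimetric domain monotonicity}, \ref{lem:Dirichlet isoperimetric local boundary concentration inequality}, \ref{lem:Dirichlet isoperimetric higher order to smallest}) apply. Everything else is a routine transcription, using $\mathcal{I}^{D}$ in place of $\widehat\nu$ and $\nu^{D}$, with the exponent $1/p$ suppressed since the isoperimetric boundary concentration inequality already carries the linear-in-$r$ form. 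One should also note that the theorem is stated for arbitrary Borel subsets $\{A_\alpha\}_{\alpha=1}^k$ of $M$: if some $A_\alpha$ is empty or if $\mathcal D^\partial(\{A_\alpha\})=0$ the inequality is vacuous (right-hand side $+\infty$), so we may assume all $A_\alpha$ nonempty and $R>0$, which is what the above uses. As with the companion statements in this section, the full details are a line-by-line copy of the proof of Theorem~\ref{thm:modified main result} and are left to the reader.
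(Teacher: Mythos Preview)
Your proposal is correct and follows exactly the route the paper intends: the paper states Theorem~\ref{thm:Dirichlet isoperimetric main result} immediately after Lemmas~\ref{lem:Dirichlet isoperimetric domain monotonicity}, \ref{lem:Dirichlet isoperimetric local boundary concentration inequality}, \ref{lem:Dirichlet isoperimetric higher order to smallest} with the phrase ``The above lemmas imply the following'', leaving the proof implicit as a direct transcription of the proof of Theorem~\ref{thm:modified main result} with $R=\mathcal D^\partial(\{A_\alpha\})$, precisely as you wrote. The one minor imprecision---``$A_{\alpha_0}$ has distance exactly $r$ from $\partial\Omega_{\alpha_0}$''---should read ``at least $r$'', but this is what the argument actually uses and matches the paper's own level of detail.
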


\section{Sharpness}\label{sec:Sharpness}

 Throughout this section,
 $M$ is assumed to be closed.

\subsection{Sharpness of Theorem \ref{thm:main
 result}}\label{sec:Thm 1.1 sharp}
 In this subsection,
 we prove that
 Theorem \ref{thm:main result} is sharp with respect to the order of $k$.
 We first prepare the following proposition,
 which is an extension of Proposition 3.1 of
 \cite{F1} originally proved by Buser \cite{B} and Gromov \cite{G} independently.

 \begin{prop}\label{sect 6 prop}
  Let $\{B_{\alpha}\}_{\alpha=0}^{k-1}$ be a sequence of compact subsets of $M$ such that
  $M=\cup^{k-1}_{\alpha=0}B_{\alpha}$ and $m(B_{\alpha}\cap
  B_{\beta})=0$ for $\alpha \neq \beta$. Then we have
  \begin{equation*}
   \widehat{\nu}_{k,p}(M,m)^{\frac{1}{p}}\gtrsim \frac{1}{p}  \min_{\alpha} \mathcal{I}_{1}(B_{\alpha},m).
   \end{equation*}
   \end{prop}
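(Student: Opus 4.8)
The plan is to relate $\widehat{\nu}_{k,p}(M,m)$ from below to the smallest first Poincaré constant among the pieces $B_\alpha$, and then use the Federer--Fleming-type equivalence between the first $p$-Poincaré constant and the Cheeger constant on each piece. First I would establish a domain-decomposing lower bound that mirrors Lemma \ref{lem:higher order to smallest} but in the reverse direction: since $M=\bigcup_{\alpha=0}^{k-1}B_\alpha$ with pairwise $m$-null intersections, any $\phi\in W^{1,p}(M,m)$ restricts to a function on each $B_\alpha$, and
\begin{equation*}
\int_M \|\nabla\phi\|^p\,dm=\sum_{\alpha=0}^{k-1}\int_{B_\alpha}\|\nabla\phi\|^p\,dm,\qquad
\int_M|\phi|^p\,dm=\sum_{\alpha=0}^{k-1}\int_{B_\alpha}|\phi|^p\,dm.
\end{equation*}
Given a $(k+1)$-dimensional subspace $L_{k+1}\subset W^{1,p}(M,m)$ realizing $\widehat{\nu}_{k,p}(M,m)$ up to $\epsilon$, the idea is that for each $\alpha$ the restriction map to $L^p(B_\alpha,m)$ has image of dimension at most something controllable, but more simply: one wants to produce, for a suitable $\phi\in L_{k+1}$, a single index $\alpha$ on which $\phi$ looks like a near-constant, so that $\int_{B_\alpha}|\phi-c|^p\,dm$ is large relative to $\int_{B_\alpha}\|\nabla\phi\|^p\,dm$. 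The cleanest route is via the elementary inequality (\ref{eq:elementary inequality}): for the maximizing $\phi\in L_{k+1}$ one has
\begin{equation*}
\widehat{\nu}_{k,p}(M,m)\le \frac{\int_M\|\nabla\phi\|^p\,dm}{\int_M|\phi|^p\,dm}
=\frac{\sum_\alpha\int_{B_\alpha}\|\nabla\phi\|^p\,dm}{\sum_\alpha\int_{B_\alpha}|\phi|^p\,dm},
\end{equation*}
but this gives an upper bound on $\widehat{\nu}_{k,p}$, the wrong direction. So instead the argument must go the other way: one should build, out of functions supported near the $B_\alpha$, a test subspace for $\widehat{\nu}_{k,p}$ — but that is exactly what already proves Theorem \ref{thm:modified main result}. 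For a \emph{lower} bound one must instead exploit that a $(k+1)$-dimensional family cannot avoid being nearly constant on at least one of the $k$ pieces $B_\alpha$.

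Concretely, I would argue as follows. Fix a $(k+1)$-dimensional subspace $L_{k+1}$. Consider the $k$ linear functionals $\phi\mapsto \int_{B_\alpha}\phi\,dm$ for $\alpha=0,\dots,k-1$; their common kernel intersected with $L_{k+1}$ has dimension $\ge 1$, so choose a nonzero $\phi\in L_{k+1}$ with $\int_{B_\alpha}\phi\,dm=0$ for every $\alpha$. Pick $\alpha_0$ maximizing $\int_{B_{\alpha_0}}|\phi|^p\,dm\ge \frac1k\int_M|\phi|^p\,dm$. Then
\begin{equation*}
\frac{\int_M\|\nabla\phi\|^p\,dm}{\int_M|\phi|^p\,dm}
\ge \frac1k\cdot\frac{\int_{B_{\alpha_0}}\|\nabla\phi\|^p\,dm}{\int_{B_{\alpha_0}}|\phi|^p\,dm}
\ge \frac1k\,\nu_{1,p}(B_{\alpha_0},m_{B_{\alpha_0}})\cdot\frac{m(B_{\alpha_0})}{\text{(normalization)}},
\end{equation*}
using that $\phi$ has $m$-mean zero on $B_{\alpha_0}$ so it is an admissible competitor for the first Neumann $p$-Poincaré constant on $(B_{\alpha_0},m|_{B_{\alpha_0}})$. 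Taking the supremum over $\phi$ and then the infimum over $L_{k+1}$ yields $\widehat{\nu}_{k,p}(M,m)\gtrsim \frac1k\min_\alpha \nu_{1,p}(B_\alpha,m)$; absorbing the $k$ is fine since we only claim $\gtrsim$ — wait, the statement has no $k$ on the right, so I must be more careful and in fact the $\frac1k$ loss must be avoided by choosing $\phi$ to be (near-)constant on the worst piece rather than just large there, which is where the hypothesis $M=\bigcup B_\alpha$ with only $k$ pieces but subspace dimension $k+1$ is used sharply.

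The replacement of $\nu_{1,p}$ by the Cheeger constant $\mathcal{I}_1$ is the routine part: by (\ref{eq:FFequiv}) and a Cheeger-type inequality one has, on each compact piece $B_\alpha$ with its induced weighted structure, $\nu_{1,p}(B_\alpha,m)^{1/p}\gtrsim \frac1p\,\mathcal{I}_1(B_\alpha,m)$ — this is the standard Buser--Ledoux chain $\lambda_{1,p}^{1/p}\gtrsim \frac1p\,h$ whose $p$-dependence accounts for the $\frac1p$ in the statement, and it is precisely Proposition 3.1 of \cite{F1} (after Buser \cite{B} and Gromov \cite{G}) applied piecewise. \textbf{The main obstacle} is the first step: getting the decomposition lower bound \emph{without} the spurious $\frac1k$ factor, i.e.\ showing that a $(k+1)$-dimensional subspace of $W^{1,p}(M,m)$ must contain a nonzero function whose Rayleigh quotient on $M$ is controlled by the worst single-piece first Poincaré quotient, with a universal (not $k$-dependent) constant. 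The resolution is to observe that the $k$ restriction-to-$L^p(B_\alpha)$ maps together cannot be injective on a $(k+1)$-dimensional space only if ... — more precisely one uses that there is a nonzero $\phi\in L_{k+1}$ vanishing $m$-a.e.\ on $k-1$ of the pieces and mean-zero on the last one, forcing the Rayleigh quotient to be $\ge \nu_{1,p}$ of that single remaining piece exactly, with constant $1$. I would then let $\epsilon\to 0$ to finish.
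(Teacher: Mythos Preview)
Your setup is exactly the paper's: take an $\epsilon$-almost-optimal $(k+1)$-dimensional subspace $L_{k+1}$ and, using the $k$ linear functionals $\phi\mapsto\int_{B_\alpha}\phi\,dm$, produce a nonzero $\phi_0\in L_{k+1}$ with $\int_{B_\alpha}\phi_0\,dm=0$ for every $\alpha$. Where you go wrong is immediately after that. You pick a single piece $B_{\alpha_0}$ carrying the largest $L^p$ mass and compare only there, which costs a factor $1/k$; you then try to recover by claiming one can instead find $\phi\in L_{k+1}$ vanishing $m$-a.e.\ on $k-1$ of the pieces. That last claim is false in general: vanishing a.e.\ on a set of positive measure is an infinite-codimension condition, not a single linear constraint, so a $(k+1)$-dimensional subspace has no reason to contain such a function.

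The fix is much simpler and uses the very $\phi_0$ you already have. Since $\phi_0$ has mean zero on \emph{every} $B_\alpha$, the first Poincar\'e inequality applies on each piece simultaneously:
\[
\nu_{1,p}(B_\alpha,m)\int_{B_\alpha}|\phi_0|^p\,dm\;\le\;\int_{B_\alpha}\|\nabla\phi_0\|^p\,dm\qquad(\alpha=0,\dots,k-1).
\]
Summing over $\alpha$ and using $\sum_\alpha\int_{B_\alpha}|\phi_0|^p\,dm=\int_M|\phi_0|^p\,dm$ and likewise for the gradient (this is where the hypotheses $M=\bigcup_\alpha B_\alpha$ and $m(B_\alpha\cap B_\beta)=0$ enter) gives
\[
\min_\alpha \nu_{1,p}(B_\alpha,m)\;\le\;\frac{\int_M\|\nabla\phi_0\|^p\,dm}{\int_M|\phi_0|^p\,dm}\;\le\;\widehat{\nu}_{k,p}(M,m)+\epsilon,
\]
with no $k$-dependent loss. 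The second step, passing from $\nu_{1,p}(B_\alpha,m)^{1/p}$ to $\tfrac{1}{p}\,\mathcal{I}_1(B_\alpha,m)$, is as you say; the paper invokes Proposition~2.5 of \cite{Mi1} together with the Federer--Fleming equivalence $\nu_{1,1}\simeq\mathcal{I}_1$.
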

  \begin{proof}We will follow the argument of Theorem 8.2.1 of \cite{B}.
   By the definition of $\widehat{\nu}_{k,p}(M,m)$, for a given $\epsilon>0$ there exists a $(k+1)$-dimensional subspace $L_{k+1}$ of
   $W^{1,p}(M,m)$ such that
   \begin{equation}\label{eq:prop6.1-1}
    \sup_{\phi\in L_{k+1}\setminus \{0\}}\frac{\int_M\|\nabla \phi \|^p
     dm}{\int_M|\phi|^pdm}<  \widehat{\nu}_{k,p}(M,m)+\epsilon.
    \end{equation}Since $L_{k+1}$ is $(k+1)$-dimensional, a standard
   argument of linear algebra implies the existence of
   $\phi_0\in L_{k+1}$ such that $\int_M |\phi_0|^p dm =1$ and $\phi_0 $
   is orthogonal to the characteristic funtion $1_{B_{\alpha}}$ of
   $B_{\alpha}$ for $\alpha=0,\dots, k-1$, i.e., $\int_{B_{\alpha}}\phi_0\, dm=0$.
   By the definition of ${\nu}_{1,p}(B_{\alpha},m)$ we get
   \begin{equation*}
    {\nu}_{1,p}(B_{\alpha},m)\int_{B_{\alpha}}|\phi_0|^p dm \leq
     \int_{B_{\alpha}} \|\nabla \phi_0\|^p dm.
    \end{equation*}From $\int_M |\phi_0|^p dm =1$ it follows that
   \begin{align*}
    \min_{\alpha}{\nu}_{1,p}(B_{\alpha},m)\leq & \ 
     \sum_{\alpha=0}^{k-1}{\nu}_{1,p}(B_{\alpha},m)\int_{B_{\alpha}}|\phi_0|^p
     dm\\
    \leq & \  \sum_{\alpha=0}^{k-1}\int_{B_{\alpha}} \|\nabla \phi_0\|^p
     dm
    =   \ \int_M \| \nabla \phi_0\|^p dm.
    \end{align*}Combining this with (\ref{eq:prop6.1-1}) and letting $\epsilon \to 0$
   yield
   \begin{equation}\label{eq:prop6.1-2}
    \min_{\alpha}{\nu}_{1,p}(B_{\alpha},m)\leq \widehat{\nu}_{k,p}(M,m).
    \end{equation}
    
    On the other hand,
    Proposition 2.5 in \cite{Mi1} tells us that
    \begin{equation}\label{eq:prop6.1-3}
    {\nu}_{1,p}(B_{\alpha},m)^{\frac{1}{p}}\gtrsim
   \frac{1}{p}{\nu}_{1,1}(B_{\alpha},m).
    \end{equation}
    Therefore,
    (\ref{eq:prop6.1-2}), (\ref{eq:prop6.1-3}), and the equivalence of $\nu_{1,1}(B_{\alpha},m)$ and $\mathcal{I}_{1}(B_{\alpha},m)$ lead us to the desired inequality. 
   This completes the proof.
   \end{proof}

  Let us show the sharpness of Theorem \ref{thm:main result} with
  respect to the order of $k$.
  We will show it by constructing an example,
  and estimating its Poincar\'e constant.
  We can construct an example of closed manifold,
  but its formulation will be slightly complicated.
  To avoid complexity,
  we will construct an example of compact manifold with boundary on which we consider the Neumann boundary condition,
  and conclude the sharpness for it.
  Here we notice that
  Theorem \ref{thm:main result} and other results on closed manifolds can be applied to the Neumann eigenvalues on compact manifolds with boundary due to the min-max principle.
  In the end of this section,
  we will sketch an idea of the construction of the example of closed manifold (see Remark \ref{rem:closed ex}).
  
  For $a\in (0,1)$,
we work on a rectangle $[0,1]\times [0,a]$ in $\mathbb{R}^{2}$.
Given $s,t\geq 0$ setting 
\begin{equation*}
\Omega_{s,t}:=\left[s,s+\frac{1}{2(k+1)}   \right]\times [0,t],
\end{equation*}we define
\begin{equation*}
M_{a}:=\bigcup_{\alpha=0,\dots,k}\left( \Omega_{
                                  \frac{2\alpha}{2(k+1)},a} \cup
                                  \Omega_{\frac{2\alpha+1}{2(k+1)},\frac{a}{k+1}} \right).
\end{equation*}
Let us consider a pairwise disjoint sequence
 $\{A_{\alpha}\}^{k+1}_{\alpha=0}$ of subsets $A_{\alpha} \subset
 M_{a}$ defined as follows: For $\alpha=0,1,\dots, k-1$ we set
 \begin{align*}
  A_{\alpha}:=\Omega_{\frac{2\alpha}{2(k+1)},a}.
  \end{align*}For $\alpha =k, k+1$ let us set
 \begin{align*}
  A_k := \left[ \frac{6k}{6(k+1)},\frac{6k+1}{6(k+1)}\right]\times[0,a]
  \end{align*}and
 \begin{align*}
 A_{k+1}:= \left[\frac{6k+2}{6(k+1)},\frac{6k+3}{6(k+1)}\right]\times[0,a].
  \end{align*}
If $d_{M_{a}}$ and $m_{M_{a}}$ denote the Riemannian distance and the
 normalized uniform volume measure on $M_{a}$ respectively, then we see
\begin{equation*}
m_{M_{a}}\left(A_{\alpha}  \right)\geq \frac{1}{12(k+1)}, \ 
 d_{M_{a}}(A_{\alpha},A_{\beta})\geq \frac{1}{6(k+1)}
 \end{equation*}and
 \begin{equation*}
 \sum_{\beta\neq  \alpha} m_{M_a}(A_{\beta})\geq 1-\frac{2}{k+1}=\frac{k-1}{k+1}.
\end{equation*}
Hence, Theorem \ref{thm:main result} leads to
\begin{equation}\label{eq:upper example}
\nu_{k+1,p}(M_{a},m_{M_a})^{\frac{1}{p}}\lesssim k+1 \sim k.
\end{equation}
On the other hand, one can show
 \begin{equation}\label{eq:lower example}
  \nu_{k+1,p}(M_a,m_{M_a})^{\frac{1}{p}} \gtrsim \frac{k}{p} 
  \end{equation}provided that $a\sim \frac{1}{k+1}$ as follows: For
 $\alpha=0,1,\dots,k$ we set
 \begin{equation*}
  B_{\alpha}:= \Omega_{\frac{2\alpha}{2(k+1)},a} \cup
                                  \Omega_{\frac{2\alpha+1}{2(k+1)},\frac{a}{k+1}}. 
  \end{equation*}Then we have $M_a=\cup^{k}_{\alpha=0}B_{\alpha}$ and
                                  $m_{M_a}(B_{\alpha}\cap B_{\beta})=0$
                                  for $\alpha \neq \beta$. Note that $\mathcal{I}_1(s
 B_{\alpha})=s^{-1}\mathcal{I}_1(B_{\alpha})$ where $sB_{\alpha}:=\{ s
 x\mid x\in B_{\alpha}\}$. Here we consider the Cheeger constant $\mathcal{I}_1(s
 B_{\alpha})$ in $s
 M_a$ with the normalized uniform volume measure and the Cheeger constant $\mathcal{I}_1(
 B_{\alpha})$ in $(M_a,m_{M_a})$. Since $a\sim \frac{1}{k+1}$ we have $\mathcal{I}_{1}(B_{\alpha})\gtrsim
  k $. Propositions \ref{prop:equivalence between Poincare constant and modified Poincare constant} and \ref{sect 6 prop} yield (\ref{eq:lower example}).
Comparing (\ref{eq:upper example}) with (\ref{eq:lower example}),
we can conclude that
Theorem \ref{thm:main result} is sharp with respect to the order of $k$.

\begin{rem}
 This example does not deny the possibility of
 \begin{equation*}
  \lambda_k(M,m)^{\frac{1}{2}}\lesssim
   \frac{1}{\mathcal{D}(\{A_{\alpha}\})\log k}\max_{\alpha }\log \frac{1}{m(A_{\alpha})}.
  \end{equation*}This inequality was conjectured in \cite{F1} under the
 assumption of the non-negativity of Ricci curvature. One might not need
 the assumption.
 \end{rem}
 
 \subsection{Note on Theorem \ref{thm:isoperimetric main result}}\label{sec:Thm 5.4 sharp}
 Unlike Theorem \ref{thm:main result},
 the authors do not know whether Theorem \ref{thm:isoperimetric main result} is sharp with respect to the order
 of $k$.
 In this subsection,
 we discuss its possibility.
 
 For a measurable function $\phi:M\to \mathbb{R}$,
 a real number $\med_\phi$ is called its \emph{median} if it satisfies 
\begin{equation*}
 m(\{x\in M \mid \phi(x)\geq \med_{\phi}\})\geq \frac{1}{2},\quad m(\{x\in M \mid \phi(x)\leq \med_{\phi}\})\geq \frac{1}{2}.
\end{equation*}
Before we start the discussion,
let us recall the following fact due to Maz'ya \cite{mazya}
 and Federer-Fleming \cite{FF} (see e.g., Lemma 2.2 in \cite{Mi1}):
  \begin{thm}\label{11poin}The Cheeger constant $\mathcal{I}_{1}(M,m)$ is the best constant for the following Poincar\'e inequality: For any $\phi \in W^{1,1}(M,m)$,
   \begin{equation*}
   \mathcal{I}_{1}(M,m) \int_M \vert \phi-\med_{\phi} \vert dm \leq  \int_M \Vert \nabla \phi \Vert dm. 
    \end{equation*}
   \end{thm}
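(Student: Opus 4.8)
The plan is to prove the equivalence $\mathcal{I}_1(M,m)\int_M|\phi-\med_\phi|\,dm\le\int_M\|\nabla\phi\|\,dm$ in two directions: showing that $\mathcal{I}_1(M,m)$ is an admissible constant for this Poincar\'e inequality (the inequality holds with $\mathcal{I}_1(M,m)$ in place of the best constant), and that it is the best one (no larger constant works). The central tool for the first direction is the coarea formula: for $\phi\in W^{1,1}(M,m)$ one writes $\int_M\|\nabla\phi\|\,dm=\int_{-\infty}^{\infty}m^+(\{\phi>t\})\,dt$, after approximating $\phi$ by smooth functions and noting that for a.e.\ level $t$ the superlevel set $\{\phi>t\}$ has finite perimeter equal to $m^+$ of that set (this is the Federer--Fleming/Maz'ya identification of perimeter with the Minkowski content $m^+$).

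First I would handle the upper bound on $\int_M|\phi-\med_\phi|\,dm$. Split $\phi-\med_\phi$ into positive and negative parts, or equivalently use the layer-cake representation
\begin{align*}
\int_M|\phi-\med_\phi|\,dm=\int_0^{\infty}m(\{\phi-\med_\phi>t\})\,dt+\int_0^{\infty}m(\{\phi-\med_\phi<-t\})\,dt.
\end{align*}
By definition of the median, each superlevel set $\{\phi>\med_\phi+t\}$ has measure $\le 1/2$ for $t>0$, and likewise each sublevel set $\{\phi<\med_\phi-t\}$ has measure $\le 1/2$. Hence these sets are admissible competitors in the definition of the Cheeger constant (a set $A$ with $m(A)\le 1/2$ satisfies $m^+(A)\ge\mathcal{I}_1(M,m)\,m(A)$, since $m^+(A)=m^+(M\setminus A)$ and one of $A$, $M\setminus A$ has measure $\le 1/2$). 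Applying this to every level, integrating in $t$, and invoking the coarea identity for the gradient (noting $m^+(\{\phi>s\})$ as a function of $s$ is what the coarea formula integrates, and that translating by $\med_\phi$ does not change the gradient) yields $\mathcal{I}_1(M,m)\int_M|\phi-\med_\phi|\,dm\le\int_M\|\nabla\phi\|\,dm$.

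For optimality, I would test the inequality on (smooth approximations of) characteristic functions: given a Borel set $A$ with $m(A)\le 1/2$ and smooth boundary, take $\phi=1_A$. Then $\med_\phi=0$, so $\int_M|\phi-\med_\phi|\,dm=m(A)$, while $\int_M\|\nabla\phi\|\,dm$ can be made arbitrarily close to $m^+(A)$ by a standard mollification of $1_A$ supported in a thin neighborhood of $\partial A$. Thus any constant $C$ valid in the Poincar\'e inequality must satisfy $C\,m(A)\le m^+(A)$, i.e.\ $C\le m^+(A)/m(A)$; taking the infimum over all admissible $A$ gives $C\le\mathcal{I}_1(M,m)$. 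Combined with the first direction, $\mathcal{I}_1(M,m)$ is exactly the best constant.

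The main obstacle is the rigorous justification of the coarea formula and the identification of the distributional perimeter with the Minkowski-content quantity $m^+$ for a.e.\ level set, together with the density of smooth (or Lipschitz) functions in $W^{1,1}(M,m)$ so that the approximation arguments in both directions are legitimate; these are classical (Maz'ya, Federer--Fleming) but need to be cited carefully since $W^{1,1}$ is not reflexive and one cannot argue by weak compactness as freely as in $W^{1,p}$ for $p>1$. The remaining steps — the median/Cheeger competitor bound and the layer-cake bookkeeping — are routine once the coarea machinery is in place.
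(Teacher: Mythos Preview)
The paper does not give its own proof of this theorem; it simply recalls it as a classical fact due to Maz'ya and Federer--Fleming, pointing to Lemma~2.2 in \cite{Mi1} for details. Your coarea/layer-cake argument is precisely the standard proof one finds in those references, and it is correct as outlined.
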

    
   Instead of the $k$-way isoperimetric constant $\mathcal{I}_{k}(M,m)$,
   we consider the \textit{modified $k$-way isoperimetric constant $\widehat{\mathcal{I}}_{k}(M,m)$} defined by
    \begin{equation*}
    \widehat{\mathcal{I}}_{k}(M,m):=\inf_{\{A_{\alpha}\}} \max_{\alpha=0,\dots,k} \frac{m^{+}(A_{\alpha})}{m(A_{\alpha})},
    \end{equation*}
where the infimum is taken over all pairwise disjoint sequences $\{A_{\alpha}\}^{k}_{\alpha=0}$ of non-empty Borel subsets with $M=\sqcup^{k}_{\alpha=0}A_{\alpha}$.
We see that $\mathcal{I}_{k}(M,m)\leq \widehat{\mathcal{I}}_{k}(M,m)$;
moreover the equality holds when $k=1$, and $\widehat{\mathcal{I}}_{1}(M,m)$ is equivalent to $\nu_{1,1}(M,m)$ (cf. (\ref{eq:FFequiv})).
We have the following assertion:

   \begin{prop}\label{sect 6 prop2}Let $\{B_{\alpha}\}_{\alpha=0}^{k-1}$ be a sequence of compact subsets of $M$ such that
  $M=\cup^{k-1}_{\alpha=0} B_{\alpha}$ and $m(B_{\alpha}\cap
  B_{\beta})=0$ for $\alpha \neq \beta$. Then we have
  \begin{equation*}
   \widehat{\mathcal{I}}_{k}(M,m)\geq  \min_{\alpha} \mathcal{I}_{1}(B_{\alpha},m).
   \end{equation*}
   \end{prop}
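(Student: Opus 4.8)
The plan is to mimic the proof of Proposition~\ref{sect 6 prop} with two substitutions: the linear-algebra step producing a test function orthogonal to the $1_{B_{\alpha}}$'s is replaced by a pigeonhole choice of a test \emph{set}, and the Rayleigh-quotient estimate on each $B_{\alpha}$ is replaced by the median form of the Cheeger inequality, Theorem~\ref{11poin}, applied on each $B_{\alpha}$. Since the only loss of constant in Proposition~\ref{sect 6 prop} came from passing between $\nu_{1,p}$ and $\nu_{1,1}$, the purely $L^{1}$ argument should give this sharp statement without an extra factor.

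First I would fix an arbitrary admissible competitor for $\widehat{\mathcal{I}}_{k}(M,m)$, i.e.\ a pairwise disjoint sequence $\{A_{\alpha}\}_{\alpha=0}^{k}$ of non-empty Borel subsets with $M=\sqcup_{\alpha=0}^{k}A_{\alpha}$, and reduce the proposition to the pointwise claim
\begin{equation*}
\max_{\alpha=0,\dots,k}\frac{m^{+}(A_{\alpha})}{m(A_{\alpha})}\ \geq\ \min_{\beta=0,\dots,k-1}\mathcal{I}_{1}(B_{\beta},m),
\end{equation*}
taking the infimum over $\{A_{\alpha}\}$ at the end; we may assume each $m(B_{\beta})>0$, since dropping $m$-null $B_{\beta}$ changes nothing on the right. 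The key selection: for each $\beta$ we have $\sum_{\alpha=0}^{k}m(A_{\alpha}\cap B_{\beta})=m(B_{\beta})$, so at most one index $\alpha$ can satisfy $m(A_{\alpha}\cap B_{\beta})>\tfrac12 m(B_{\beta})$; these "heavy" indices, at most one for each of the $k$ values of $\beta$, number at most $k$, while $k+1$ indices are available, so there is some $\alpha_{0}$ with $m(A_{\alpha_{0}}\cap B_{\beta})\leq\tfrac12 m(B_{\beta})$ for every $\beta=0,\dots,k-1$. If $m(A_{\alpha_{0}})=0$ the pointwise claim is trivial, so assume $m(A_{\alpha_{0}})>0$.

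Next, for each $\beta$ the value $0$ is a median of $1_{A_{\alpha_{0}}}$ with respect to $m|_{B_{\beta}}$, so Theorem~\ref{11poin} applied on $B_{\beta}$ — together with the standard $W^{1,1}$-approximation of the characteristic function underlying the definition of the relative boundary measure $m^{+}_{B_{\beta}}$ — gives $\mathcal{I}_{1}(B_{\beta},m)\,m(A_{\alpha_{0}}\cap B_{\beta})\leq m^{+}_{B_{\beta}}(A_{\alpha_{0}}\cap B_{\beta})$. I would then sum over $\beta$. Since the $B_{\beta}$ cover $M$ with pairwise $m$-negligible overlaps, $\sum_{\beta}m(A_{\alpha_{0}}\cap B_{\beta})\geq m(A_{\alpha_{0}})$; and from $U_{r}(A_{\alpha_{0}}\cap B_{\beta})\cap B_{\beta}\subset U_{r}(A_{\alpha_{0}})\cap B_{\beta}$ together with superadditivity of $\liminf$ over the finitely many nonnegative difference quotients, $\sum_{\beta}m^{+}_{B_{\beta}}(A_{\alpha_{0}}\cap B_{\beta})\leq m^{+}(A_{\alpha_{0}})$. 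Combining,
\begin{equation*}
\min_{\beta}\mathcal{I}_{1}(B_{\beta},m)\cdot m(A_{\alpha_{0}})\ \leq\ \sum_{\beta}\mathcal{I}_{1}(B_{\beta},m)\,m(A_{\alpha_{0}}\cap B_{\beta})\ \leq\ \sum_{\beta}m^{+}_{B_{\beta}}(A_{\alpha_{0}}\cap B_{\beta})\ \leq\ m^{+}(A_{\alpha_{0}}),
\end{equation*}
so $m^{+}(A_{\alpha_{0}})/m(A_{\alpha_{0}})\geq\min_{\beta}\mathcal{I}_{1}(B_{\beta},m)$, which yields the pointwise claim and hence the proposition.

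The conceptual content is just the pigeonhole choice of $\alpha_{0}$; the main obstacle I anticipate is purely bookkeeping, namely fixing the right convention for the relative boundary measure $m^{+}_{B_{\beta}}$ (equivalently, for "$\int_{B_{\beta}}\Vert\nabla 1_{A_{\alpha_{0}}}\Vert\,dm$") on the possibly non-smooth compact set $B_{\beta}$, so that both the application of Theorem~\ref{11poin} to a characteristic function and the additivity estimate $\sum_{\beta}m^{+}_{B_{\beta}}(A_{\alpha_{0}}\cap B_{\beta})\leq m^{+}(A_{\alpha_{0}})$ are literally correct; the approximation behind the former is routine finite-perimeter theory, and the latter is the $\liminf$-of-sums computation indicated above.
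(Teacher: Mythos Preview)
Your proof is correct and takes a genuinely different, more elementary route than the paper's. The paper invokes the Borsuk--Ulam theorem (via the Stone--Tukey generalized sandwich theorem) to find coefficients $c_{0},\dots,c_{k}$ so that the step function $\phi_{0}=\sum_{\beta}c_{\beta}1_{A_{\beta}}$ has median $0$ on every $B_{\alpha}$; it then applies Theorem~\ref{11poin} on each $B_{\alpha}$ to $\phi_{0}$ and sums, obtaining $\min_{\alpha}\mathcal{I}_{1}(B_{\alpha},m)\sum_{\beta}|c_{\beta}|m(A_{\beta})\leq\sum_{\beta}|c_{\beta}|m^{+}(A_{\beta})$, from which the ratio bound follows. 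You replace the Borsuk--Ulam step by the pigeonhole observation that among $k+1$ pieces partitioning $M$, some single $A_{\alpha_{0}}$ already occupies at most half of each of the $k$ cells $B_{\beta}$, so $1_{A_{\alpha_{0}}}$ itself has median $0$ on every $B_{\beta}$ and the rest of the argument proceeds identically. Your selection is purely combinatorial and avoids topology altogether; the paper's approach, being the nonlinear analogue of the linear-algebra orthogonality step in Proposition~\ref{sect 6 prop}, produces a full linear combination rather than a single indicator, which is conceptually parallel to that proof but not needed for this statement. The bookkeeping issue you flag about $m^{+}_{B_{\beta}}$ is precisely the one the paper handles with the phrase ``Lipschitz functions which approximate the characteristic function $1_{A_{\beta}}$ in an appropriate sense,'' so you are on equal footing there.
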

    \begin{proof}For any $\epsilon>0$ there exists a pairwise disjoint sequences $\{A_{\beta}\}^{k}_{\beta=0}$ of non-empty Borel subsets with $M=\sqcup^{k}_{\beta=0}A_{\beta}$ such that
  \begin{align}\label{transa}
   \widehat{\mathcal{I}}_{k}(M,m)+\epsilon> \max_{\beta}\frac{m^{+}(A_{\beta})}{m(A_{\beta})}.
   \end{align}Using the Borsuk-Ulam theorem we can get constants
  $c_0,c_1,\dots,c_{k}$ such that $\phi_{0}:= \sum_{\beta=0}^{k}c_{\beta} 1_{A_{\beta}}$
  bisects each $B_0, B_1,\dots B_{k-1}$, i.e.,
     \begin{align*} m(B_{\alpha}\cap
   \phi_{0}^{-1}[\, 0,\infty\,))\geq \frac{m(B_{\alpha})}{2},\quad m(B_{\alpha}\cap
    \phi_{0}^{-1}(\,-\infty,0\,])\geq \frac{m(B_{\alpha})}{2}.
      \end{align*}In fact, according to Corollary of \cite{stonetukey}, in
   order to
   bisect $k$ subsets by a finite combination of $1_{A_0},
   1_{A_1},\dots,1_{A_k}$, it suffices to check that $1_{A_0},1_{A_1},\dots, 1_{A_k}$ are
   linearly independent modulo sets of measure zero (i.e., whenever
   $a_01_{A_0}+a_1 1_{A_1}+\cdots +a_{k}1_{A_{k}}=0$ over a Borel subset of positive
   measure, we have $a_0=a_1=\cdots =a_k=0$). This holds since $M=\sqcup^{k}_{\beta=0}A_{\beta}$.
     We now apply Theorem \ref{11poin} to Lipschitz functions which
     approximate, the characteristic function $1_{A_{\beta}}$, in an
     appropriate sense.
     Thus we see
     \begin{align*}
      \min_{\alpha} \mathcal{I}_{1}(B_{\alpha},m) \int_M \vert \phi_{0} \vert \,dm\leq \ &
      \sum_{\alpha=0}^{k-1}\mathcal{I}_{1}(B_{\alpha},m)
      \int_{B_{\alpha}}\vert  \phi_{0} \vert \,dm\\
      \leq \ &
      \sum_{\alpha=0}^{k-1}\int_{B_{\alpha}}\Vert \nabla \phi_{0} \Vert dm
      = \int_M \Vert \nabla \phi_{0}\Vert \,dm\\
      \leq \ & \sum_{\beta=0}^{k}|c_{\beta}|m^{+}(A_{\beta}).
      \end{align*}From
     \begin{equation*}
      \int_M \vert \phi_{0}\vert\,dm = \sum_{\beta=0}^k |c_{\beta}|m(A_{\beta}),
      \end{equation*}we derive
     \begin{equation*}
      \min_{\alpha} \mathcal{I}_{1}(B_{\alpha},m)\leq \max_{\beta}\frac{m^+(A_{\beta})}{m(A_{\beta})}.
      \end{equation*}Therefore,
      this together with (\ref{transa}) completes the proof.
    \end{proof}
         
    We now consider $M_{a}$ introduced in the above subsection.
    By Theorem \ref{thm:isoperimetric main result} we possess $\mathcal{I}_{k+1}(M_a,m_{M_a})\lesssim k+1$.
    On the other hand,
    Proposition \ref{sect 6 prop2}, and the same argument as in the above subsection yield $\widehat{\mathcal{I}}_{k+1}(M_a,m_{M_a})\gtrsim k$ provided that $a\sim \frac{1}{k+1}$.
    If one can replace $\widehat{\mathcal{I}}_{k+1}(M,m)$ with $\mathcal{I}_{k+1}(M,m)$ in Proposition \ref{sect 6 prop2},
    then we can show that $\mathcal{I}_{k+1}(M_a,m_{M_a})\gtrsim k$,
    and conclude that Theorem \ref{thm:isoperimetric main result} is also sharp.
    
    \begin{rem}\label{rem:closed ex}
    We can discuss the sharpness of Theorems \ref{thm:main result} and \ref{thm:isoperimetric main result} by considering the following closed manifold instead of $M_{a}$:
    We construct $(k+1)$ flat tori by identifying the edges of large rectangles in $M_{a}$,
    and connect them by cylinders constructed by identifying the upper edges of small rectangles in $M_{a}$ with their lower edges.
    We can discuss the sharpness by applying the same argument to this closed manifold.
    \end{rem}

\section{Poincar\'e constants and inscribed radii}\label{sec:Inscribed radius estimates}
In this section,
we always assume that
$M$ is a compact manifold with boundary.
Its \textit{inscribed radius} is defined as
\begin{equation*}
\IR M:=\sup_{x\in M} d(x,\bm).
\end{equation*}
We will discuss upper bounds for the inscribed radii under non-negativity of the weighted Ricci curvature.

We denote by $\dim M$ the dimension of $M$,
and by $\ric_{g}$ the Ricci curvature induced from the Riemannian metric $g$.
For $N\in (-\infty,\infty]$,
the $N$-\textit{weighted Ricci curvature} is defined as follows (\cite{BE}, \cite{Lic}):
\begin{equation*}
\ric^{N}_{m}:=\ric_{g}+\Hess f-\frac{d f \otimes d f}{N-\dim M},
\end{equation*}
where $d f$ and $\Hess f$ are the differential and the Hessian of $f$,
respectively.
Let $\ric^{N}_{m,M}$ stand for the infimum of $\ric^{N}_{m}$ over the unit tangent bundle over $M$.
We produce the following upper bound of the inscribed radius based on the fundamental principles established in Section \ref{sec:Dirichlet Poincare constants and boundary concentration}.
\begin{prop}\label{prop:inscribed estimate}
For $N\in [\dim M,\infty)$
we assume $\ric^{N}_{m,M}\geq 0$.
Then
\begin{equation}\label{eq:inrad est}
\IR M \leq \frac{2}{\nu^{D}_{1,p}(M,m)^{\frac{1}{p}}}\left( 1+N \log 2  \right).
\end{equation}
\end{prop}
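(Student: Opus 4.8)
The plan is to realize the inscribed radius at a point $x_{0}$, apply the domain monotonicity principle (Lemma~\ref{lem:Dirichlet domain monotonicity}) and the boundary concentration inequality (Lemma~\ref{lem:Dirichlet local boundary concentration inequality}) to the open metric ball $\Omega:=U_{\rho}(x_{0})$ of radius $\rho:=\IR M$, and then convert the resulting volume ratio into the exponential bound by the Bishop--Gromov relative volume comparison under $\ric^{N}_{m}\geq 0$.

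First I would fix $x_{0}\in M$ with $d(x_{0},\bm)=\rho$, which exists since $M$ is compact and $x\mapsto d(x,\bm)$ is continuous. Because every unit-speed geodesic issuing from $x_{0}$ meets $\bm$ only after time at least $\rho$, and every minimal geodesic from $x_{0}$ to a point of $\Omega$ stays in $\Omega$, the ball $\Omega=U_{\rho}(x_{0})$ is a domain contained in $\inte M$; moreover $\Omega\neq M$, so $\partial\Omega\neq\emptyset$ and in fact $\partial\Omega\subset\{x\in M\mid d(x_{0},x)=\rho\}$. Consequently $d(y,\partial\Omega)\geq\rho-d(x_{0},y)$ for every $y\in\Omega$, so $U_{\rho-r}(x_{0})\subset\Omega\setminus B_{r}(\partial\Omega)$ for all $r\in(0,\rho)$. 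Combining this containment with Lemma~\ref{lem:Dirichlet local boundary concentration inequality} applied to $\Omega$ and with Lemma~\ref{lem:Dirichlet domain monotonicity} yields, for $r\in(0,\rho)$,
\[
\frac{m(U_{\rho-r}(x_{0}))}{m(U_{\rho}(x_{0}))}\leq m_{\Omega}\bigl(\Omega\setminus B_{r}(\partial\Omega)\bigr)\leq\exp\Bigl(1-\nu^{D}_{1,p}(\Omega,m)^{\frac{1}{p}}\,r\Bigr)\leq\exp\Bigl(1-\nu^{D}_{1,p}(M,m)^{\frac{1}{p}}\,r\Bigr),
\]
where $m_{\Omega}$ is as in (\ref{eq:normalized measure}).

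Next I would invoke the weighted Bishop--Gromov inequality: under $\ric^{N}_{m,M}\geq 0$ with $N\in[\dim M,\infty)$, the map $s\mapsto m(U_{s}(x_{0}))/s^{N}$ is non-increasing on $(0,\rho]$, whence $m(U_{\rho-r}(x_{0}))/m(U_{\rho}(x_{0}))\geq\bigl((\rho-r)/\rho\bigr)^{N}$. Taking $r=\rho/2$ in the displayed chain gives $2^{-N}\leq\exp\bigl(1-\nu^{D}_{1,p}(M,m)^{\frac{1}{p}}\rho/2\bigr)$, and taking logarithms and rearranging produces exactly (\ref{eq:inrad est}). The geometric containments are routine; the step requiring care is the legitimacy of Bishop--Gromov on a manifold with boundary. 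Since $d(x_{0},\bm)=\rho$, the exponential map at $x_{0}$ is defined via genuine (boundary-avoiding) geodesics on the open ball of radius $\rho$ in $T_{x_{0}}M$, so the comparison for concentric balls of radius at most $\rho$ reduces to the usual infinitesimal Jacobian estimate under $\ric^{N}_{m}\geq 0$; here one should cite the standard weighted volume comparison together with its known refinements for manifolds with boundary.
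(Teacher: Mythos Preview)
Your proof is correct and follows essentially the same route as the paper's: realize $\IR M$ at a point $x_{0}$, work in the open ball $\Omega=U_{\rho}(x_{0})\subset\inte M$, combine the boundary concentration inequality (Lemma~\ref{lem:Dirichlet local boundary concentration inequality}) with the domain monotonicity (Lemma~\ref{lem:Dirichlet domain monotonicity}), bound the inner volume ratio via the weighted Bishop--Gromov comparison of Qian, and specialize to $r=\rho/2$. The only differences are cosmetic: you make the containment $U_{\rho-r}(x_{0})\subset\Omega\setminus B_{r}(\partial\Omega)$ and the applicability of Bishop--Gromov near the boundary explicit, whereas the paper passes to the limit $r\to R/2$ and simply cites Qian's result.
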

\begin{proof}
We set $R:=\IR M$.
Since $M$ is compact,
we can find a point $x_{0}\in M$ such that $R=d(x_{0},\bm)$.
For $r>0$,
let $\Omega_{r}$ denote the the open ball of radius $r$ centered at $x_{0}$.
By the definition of the inscribed radius,
$\Omega_{R}$ is contained in $\inte M$.
This enables us to apply the volume comparison theorem of Bishop-Gromov type under $\ric^{N}_{m}\geq 0$ obtained by Qian \cite{Q} to $\Omega_{R},\,\Omega_{R/2}$ (see Corollary 2 in \cite{Q}).
Hence
\begin{equation}\label{eq:BishopGromov}
2^{-N}\leq \frac{m(\Omega_{R/2})}{m(\Omega_{R})}=m_{\Omega_{R}}\left(\Omega_{R/2}\right),
\end{equation}
where $m_{\Omega_{R}}$ is defined as $(\ref{eq:normalized measure})$.

On the other hand,
from Lemmas \ref{lem:Dirichlet domain monotonicity} and \ref{lem:Dirichlet local boundary concentration inequality}
we deduce
\begin{align}\label{eq:reverse BishopGromov}
m_{\Omega_{R}}\left(\Omega_{R}  \setminus B_{r}(\partial
 \Omega_{R})\right) &\leq \exp\left(1-\nu^{D}_{1,p}(\Omega_{R},m)^{\frac{1}{p}}\,r\right)\\
                                                                                                                      &\leq \exp\left(1-\nu^{D}_{1,p}(M,m)^{\frac{1}{p}}\,r\right) \notag
\end{align}
for all $r>0$.
By letting $r\to R/2$ in (\ref{eq:reverse BishopGromov}),
and combining it with (\ref{eq:BishopGromov}),
we arrive at
\begin{equation*}
2^{-N}\leq m_{\Omega_{R}}\left(\Omega_{R/2}\right)\leq  \exp\left(1-\frac{1}{2}\nu^{D}_{1,p}(M,m)^{\frac{1}{p}}R\right).
\end{equation*}
This yields the desired inequality.
\end{proof}

\begin{rem}
The authors wonder whether
one can extend Proposition \ref{prop:inscribed estimate} to the higher-order case of $k\geq 2$ as follows:
Under the same setting as in Proposition \ref{prop:inscribed estimate},
\begin{equation*}
\IR M \leq \frac{2k}{\nu^{D}_{k,p}(M,m)^{\frac{1}{p}}}\left( 1+N \log 2  \right).
\end{equation*}
In the case of $p=2$,
a similar inequality was implicitly shown in \cite{S}.
The second author \cite{S} has remarked that
a classical method by Cheng \cite{C} leads us to the following (see the inequality (2.16) in Remark 2.4 in \cite{S}):
If $\ric^{N}_{m,M}\geq 0$ for $N\in [\dim M,\infty)$,
then
\begin{equation}\label{eq:Cheng}
\nu^{D}_{k,2}(M,m)\leq 2N(N+4)k^{2} (\IR M)^{-2}.
\end{equation}
The second author \cite{S} has only considered the unweighted case where $f=0$ and $N=\dim M$,
but one can extend it to the weighted setting by using weighted comparison geometric results developed by Qian \cite{Q}.
The inequality (\ref{eq:Cheng}) can be rewritten as
\begin{equation*}
\IR M\leq \frac{\sqrt{2}k}{\nu^{D}_{k,2}(M,m)^{\frac{1}{2}}}\sqrt{N(N+4)}.
\end{equation*}
\end{rem}

\begin{rem}
The authors do not know whether
Proposition \ref{prop:inscribed estimate} is optimal over $p$ and $N$.
For example,
for the Euclidean $r$-ball $B_{r}$,
its first Dirichlet eigenvalue $\lambda^{D}_{1,p}(B_{r})$ of the (unweighted) $p$-Laplacian satisfies
\begin{equation*}\label{eq:euc ball}
\IR B_{r}=r \geq \frac{\dim B_{r}}{p\,\lambda^{D}_{1,p}(B_{r})^{\frac{1}{p}}},
\end{equation*}
which is a consequence of the $p$-Cheeger inequality (see e.g., Theorem 2 in \cite{T}, Theorem 4.1 in \cite{Ma}, Theorem 4.3 in \cite{M}).
In particular,
this simplest example does not show that
Proposition \ref{prop:inscribed estimate} is optimal with respect to $p$.
Also,
Mao \cite{M} has obtained a more refined estimate than (\ref{eq:inrad est}) in the case where
$M$ is a closed ball in a Riemannian manifold of non-negative Ricci curvature (see Theorem 4.3 in \cite{M}).
One might be able to extend his estimate to our general setting.
\end{rem}

\section{Discrete cases}\label{sec:Discrete cases}
The aim of this last section is to point out that
we possess an discrete analogue of Theorem \ref{thm:main result}.
We only state the setting,
and the statement of the key lemmas and the main results.
One can show them along the line of the argument in Section \ref{sec:Poincare constants and concentration}.
The proof is left to the reader.

\subsection{Weighted graphs and their Poincar\'{e} constants}
We explain our discrete setting.
We refer to Chapter 1 in \cite{Gr} (see also Theorem 3.3 in \cite{L}, and Section 4 in \cite{GH}):
 Let $G=(V,E)$ be a simple connected finite graph. A \emph{weighted
 graph} is a pair $(G,\mu)$ where $\mu$ is a nonnegative function on
 $V\times V$ such that (1) $\mu_{x,y}=\mu_{y,x}$; and (2) $\mu_{x,y}>0$ iff $x$
 and $y$ form an edge. We consider the graph distance on $V$.

 For a point $x\in V$ and a subset $\Omega \subset V$ we set
 $\mu(x):=\sum_{y\in V}\mu_{x,y}$ and $\mu(\Omega):=\sum_{x\in \Omega}\mu(x)$.
 For a function $\phi:V\to \mathbb{R}$ we
 define
 \begin{align*}
  \overline{\phi}:=\frac{1}{\mu(V)}\sum_{x\in V}\phi(x)\mu(x), 
 \end{align*}i.e., $\overline{\phi}$ is the mean of $\phi$ with respect to the
 weight $\mu$. Given $0\leq k \leq \#V-1$ we introduce the \textit{$k$-th discrete $p$-Poincar\'e
 constant} of $(G,\mu)$ as
\begin{equation*}
\nu_{k,p}(G,\mu):=\inf_{L_{k}} \sup_{\phi \in L_{k}\setminus \{0\}} \frac{  \sum_{x,y\in V} \vert \phi(y)-\phi(x) \vert^{p}\,\mu_{x,y}    }{2\sum_{x\in V}\vert \phi(x)-\overline{\phi} \vert^{p}\, \mu(x)    },
\end{equation*}
where the infimum is taken over all $k$-dimensional subspace $L_k$ of the
associated $L^{p}$-space $L^{p}(G,\mu)$ on $(G,\mu)$ which
does not contain nontrivial constant functions.
The \textit{(weighted) graph Laplacian $\Delta_{\mu}$} is defined as
\begin{align*}
 \Delta_{\mu}\phi(x):=\phi(x)-\sum_{y\in V}\phi(y)\frac{\mu_{x,y}}{\mu(x)}.
 \end{align*}Then the $k$-th nontrivial eigenvalue $\lambda_k(G,\mu)$ of
 $\Delta_{\mu}$ coincides with $\nu_{k,2}(G,\mu)$.

 As in the Riemannian case we compare discrete
 Poincar\'e constants with its modified version and then we will work
 with the modified version.
  We introduce the \textit{$k$-th discrete modified $p$-Poincar\'e
 constant} of $(G,\mu)$ as
 \begin{equation*}
 \widehat{\nu}_{k,p}(G,\mu):=\inf_{L_{k+1}} \sup_{\phi \in L_{k+1}\setminus \{0\}} \frac{  \sum_{x,y\in V} \vert \phi(y)-\phi(x) \vert^{p}\,\mu_{x,y}    }{2\sum_{x\in V}\vert \phi(x) \vert^{p}\, \mu(x)    },
\end{equation*}where the infimum is taken over all $(k+1)$-dimensional subspace $L_{k+1}$ of $L^{p}(G,\mu)$. Then we have $\nu_{k,2}(G,\mu)=
\widehat{\nu}_{k,2}(G,\mu)$ and the following:
\begin{lem}
 \begin{align*}
\nu_{k,p}(G,\mu)\leq \widehat{\nu}_{k,p}(G,\mu)\leq 2^p\nu_{k,p}(G,\mu).
  \end{align*}
 \end{lem}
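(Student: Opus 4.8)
The plan is to transcribe the proof of Proposition \ref{prop:equivalence between Poincare constant and modified Poincare constant} to the graph setting; the only step that needs a new (but entirely routine) ingredient is a discrete version of Lemma \ref{lem:Milman lemma}. As a preliminary reduction I would note that $\nu_{k,p}(G,\mu)$, $\widehat{\nu}_{k,p}(G,\mu)$, the mean $\overline{\phi}$, and the Markov kernel $\mu_{x,y}/\mu(x)$ are all invariant under the rescaling $\mu\mapsto\mu/\mu(V)$, since in the two Rayleigh quotients the numerator and the denominator pick up the same factor $\mu(V)^{-1}$. Hence I may assume $\mu(V)=1$, so that $x\mapsto\mu(x)$ is a probability measure on $V$ and $\overline{\phi}=\sum_{x\in V}\phi(x)\mu(x)$ is the associated expectation; write $\|\psi\|_{L^p}^p:=\sum_{x\in V}|\psi(x)|^p\mu(x)$ and let $\mathcal{C}$ denote the constant functions on $V$.

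The graph analogue of Lemma \ref{lem:Milman lemma} then reads: for every $\phi\in L^p(G,\mu)$,
\[
\inf_{c\in\mathbb{R}}\|\phi-c\|_{L^p}\le\|\phi-\overline{\phi}\|_{L^p}\le 2\inf_{c\in\mathbb{R}}\|\phi-c\|_{L^p}.
\]
The first inequality is immediate since $\overline{\phi}\in\mathbb{R}$. For the second I fix $c\in\mathbb{R}$ and repeat the computation in the proof of Lemma \ref{lem:Milman lemma}: by the triangle inequality for $\|\cdot\|_{L^p}$,
\[
\|\phi-\overline{\phi}\|_{L^p}\le\|\phi-c\|_{L^p}+\|\overline{\phi}-c\|_{L^p}=\|\phi-c\|_{L^p}+|\overline{\phi}-c|,
\]
using $\|\overline{\phi}-c\|_{L^p}=|\overline{\phi}-c|$ since $\mu$ is a probability measure; and then $|\overline{\phi}-c|=|\overline{\phi-c}|\le\|\phi-c\|_{L^1}\le\|\phi-c\|_{L^p}$ by Jensen's inequality. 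Taking the infimum over $c$ gives the displayed bound.

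With the reduction and this lemma in hand, the two inequalities follow verbatim as in Proposition \ref{prop:equivalence between Poincare constant and modified Poincare constant}. For $\nu_{k,p}(G,\mu)\le\widehat{\nu}_{k,p}(G,\mu)$: given a $(k+1)$-dimensional subspace $L_{k+1}\subset L^p(G,\mu)$, the subspace $\overline{L}_{k+1}:=\{\phi\in L_{k+1}\mid\overline{\phi}=0\}$ has dimension at least $k$ and meets $\mathcal{C}$ only in $0$; choosing a $k$-dimensional $\overline{L}_k\subset\overline{L}_{k+1}$ and using that the two denominators agree on $\overline{L}_k$ (because $\overline{\phi}=0$ there) bounds $\nu_{k,p}(G,\mu)$ by the supremum of the modified Rayleigh quotient over $L_{k+1}\setminus\{0\}$, and one takes the infimum over $L_{k+1}$. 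For $\widehat{\nu}_{k,p}(G,\mu)\le 2^p\,\nu_{k,p}(G,\mu)$: given a $k$-dimensional $L_k$ with $L_k\cap\mathcal{C}=\{0\}$, the lemma gives $\sum_x|\phi-\overline{\phi}|^p\mu(x)\le 2^p\inf_{c}\sum_x|\phi-c|^p\mu(x)$ for $\phi\in L_k$; set $\widetilde{L}_{k+1}:=L_k\oplus\mathcal{C}$, which is $(k+1)$-dimensional, write a general element as $\psi+c$ with $\psi\in L_k$, and use $|\psi(y)+c-\psi(x)-c|=|\psi(y)-\psi(x)|$ together with $\sum_x|\psi+c|^p\mu(x)\ge\inf_{c'}\sum_x|\psi-c'|^p\mu(x)$ to bound the modified Rayleigh quotient on $\widetilde{L}_{k+1}\setminus\{0\}$ by $2^p$ times the Rayleigh quotient defining $\nu_{k,p}(G,\mu)$ over $L_k\setminus\{0\}$; the infimum over $L_k$ finishes. (The equality $\nu_{k,2}(G,\mu)=\widehat{\nu}_{k,2}(G,\mu)$ stated just before the lemma is then the min--max principle for $\Delta_{\mu}$, exactly as in (\ref{eq:closed min max principle}).)

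There is no real obstacle here: the proof is a line-by-line transcription of the Riemannian argument, with sums over $V$ replacing integrals over $M$ and the discrete Dirichlet form $\sum_{x,y}|\phi(y)-\phi(x)|^p\mu_{x,y}$ replacing $\int_M\|\nabla\phi\|^p\,dm$. The single bookkeeping point one must not overlook is the normalization $\mu(V)$, which is precisely why I would dispose of it at the outset by rescaling $\mu$; after that reduction every inequality above is identical to one already proved in Section \ref{sec:Preliminaries}.
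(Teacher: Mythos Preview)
Your proposal is correct and is exactly the approach the paper intends: the paper leaves the proof to the reader with the instruction to follow the Riemannian arguments, and you have faithfully transcribed the proof of Proposition~\ref{prop:equivalence between Poincare constant and modified Poincare constant} (together with the needed discrete analogue of Lemma~\ref{lem:Milman lemma}) to the graph setting. The normalization $\mu(V)=1$ is a clean way to handle the bookkeeping, and every step matches the Riemannian argument line by line.
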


To analyze discrete modified Poincar\'e constants we prepare its local version.
For $\Omega \subset V$,
we denote by $\partial_{v} \Omega$ its vertex boundary defined as the set of all $x \in \Omega$ satisfying $\mu_{x,y}>0$ for some $y\in V\setminus \Omega$.
Further,
let $L^{p}_{0}(\Omega,\mu)$ denote a subspace of $L^{p}(\Omega,\mu)$ defined as the set of all functions $\phi \in L^{p}(\Omega,\mu)$ with $\phi|_{\partial_{v} \Omega}=0$.
We now define the \textit{$k$-th discrete Dirichlet $p$-Poincar\'e constant} by
\begin{equation*}
\nu^{D}_{k,p}(\Omega,\mu):=\inf_{L_{k,0}} \sup_{\phi \in L_{k,0}\setminus \{0\}} \frac{  \sum_{x,y\in \Omega} \vert \phi(y)-\phi(x) \vert^{p}\,\mu_{x,y}    }{2\sum_{x\in \Omega}\vert \phi(x) \vert^{p}\, \mu(x)    },
\end{equation*}
where the infimum is taken over all $k$-dimensional subspaces $L_{k,0}$ of $L^{p}_{0}(\Omega,\mu)$ .
We summarize key tools for the proof of our main result.
\begin{lem}\label{lem:discrete domain monotonicity}
For $\Omega \subset V$,
we have
\begin{equation*}
\widehat{\nu}_{k,p}(G,\mu) \leq \nu^{D}_{k+1,p}(\Omega,\mu).
\end{equation*}
\end{lem}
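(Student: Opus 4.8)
The plan is to mimic exactly the proof of Lemma~\ref{lem:domain monotonicity} in the Riemannian setting, transplanting the argument to the discrete language of weighted graphs. The key observation is purely formal: the discrete modified Poincar\'e constant $\widehat{\nu}_{k,p}(G,\mu)$ is defined as an infimum over $(k+1)$-dimensional subspaces $L_{k+1}$ of $L^{p}(G,\mu)$, while the discrete Dirichlet constant $\nu^{D}_{k+1,p}(\Omega,\mu)$ is an infimum over $(k+1)$-dimensional subspaces of $L^{p}_{0}(\Omega,\mu)$. So the natural strategy is to show that every admissible subspace for the Dirichlet problem on $\Omega$ is (after extension by zero) an admissible subspace for the modified problem on $G$, with the relevant Rayleigh quotient only decreasing under this extension.

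Concretely, first I would fix a $(k+1)$-dimensional subspace $L_{k+1,0}$ of $L^{p}_{0}(\Omega,\mu)$. For each $\phi \in L_{k+1,0}$, extend $\phi$ to a function $\widetilde{\phi}$ on all of $V$ by setting $\widetilde{\phi}\equiv 0$ on $V\setminus\Omega$; since $\phi$ already vanishes on $\partial_v\Omega$, the extension is consistent and the map $\phi\mapsto\widetilde{\phi}$ is linear and injective, so the image $\widetilde{L}_{k+1}$ is again $(k+1)$-dimensional. One must check $\widetilde{L}_{k+1}$ contains no nontrivial constant — but the modified constant $\widehat{\nu}_{k,p}(G,\mu)$ has no such requirement in its definition (unlike $\nu_{k,p}(G,\mu)$), so this point is actually vacuous; that is precisely why one works with the modified version. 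Then I would compare the two Rayleigh quotients: the denominators satisfy $\sum_{x\in V}|\widetilde{\phi}(x)|^{p}\mu(x)=\sum_{x\in\Omega}|\phi(x)|^{p}\mu(x)$ exactly, while for the numerators, $\sum_{x,y\in V}|\widetilde{\phi}(y)-\widetilde{\phi}(x)|^{p}\mu_{x,y}\geq\sum_{x,y\in\Omega}|\phi(y)-\phi(x)|^{p}\mu_{x,y}$, because the left-hand sum ranges over strictly more pairs (the terms with $x$ or $y$ outside $\Omega$ contribute nonnegatively, and the terms with both inside $\Omega$ reproduce the right-hand side). Hence for every $\phi$ the $G$-Rayleigh quotient of $\widetilde\phi$ dominates the $\Omega$-Rayleigh quotient of $\phi$, so $\sup_{\widetilde{L}_{k+1}}(\cdots)\geq\sup_{L_{k+1,0}}(\cdots)$, and taking the infimum over $L_{k+1,0}$ gives $\widehat{\nu}_{k,p}(G,\mu)\leq\nu^{D}_{k+1,p}(\Omega,\mu)$.

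I do not expect any genuine obstacle here; the inequality is essentially tautological once the extension-by-zero map is set up correctly. The only point requiring mild care is the bookkeeping on the numerator: one should be explicit that edges with exactly one endpoint in $\partial_v\Omega$ still contribute $|\widetilde\phi(y)-\widetilde\phi(x)|^p\mu_{x,y}\geq 0$ to the $G$-sum without being counted in the $\Omega$-sum, so dropping them can only decrease the numerator. Because the statement is routine in exactly the sense the authors indicate ("One can show them along the line of the argument in Section~\ref{sec:Poincare constants and concentration}. The proof is left to the reader."), the cleanest write-up is simply to record the extension map and the two comparisons above, observing that it is the discrete counterpart of Lemma~\ref{lem:domain monotonicity}.
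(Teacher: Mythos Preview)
Your extension-by-zero strategy is the right one and matches what the paper intends, but there is a directional slip in the numerator comparison that, as written, breaks the logic. You show
\[
\sum_{x,y\in V}|\widetilde{\phi}(y)-\widetilde{\phi}(x)|^{p}\mu_{x,y}\ \geq\ \sum_{x,y\in\Omega}|\phi(y)-\phi(x)|^{p}\mu_{x,y},
\]
deduce $R_G(\widetilde\phi)\geq R_\Omega(\phi)$, and hence $\sup_{\widetilde L_{k+1}}R_G\geq \sup_{L_{k+1,0}}R_\Omega$. But from $\widehat\nu_{k,p}(G,\mu)\leq \sup_{\widetilde L_{k+1}}R_G$ together with $\sup_{\widetilde L_{k+1}}R_G\geq \sup_{L_{k+1,0}}R_\Omega$ you \emph{cannot} conclude $\widehat\nu_{k,p}(G,\mu)\leq \sup_{L_{k+1,0}}R_\Omega$; the inequality points the wrong way.

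The fix is to observe that the two numerators are in fact \emph{equal}. Any edge $\{x,y\}$ with $x\in\Omega$ and $y\in V\setminus\Omega$ forces $x\in\partial_v\Omega$ by definition, so $\phi(x)=0$; and $\widetilde\phi(y)=0$ by construction, hence this edge contributes $0$ to the $G$-sum. Edges with both endpoints outside $\Omega$ also contribute $0$. Thus only edges with both endpoints in $\Omega$ survive, giving exactly the $\Omega$-sum. With $R_G(\widetilde\phi)=R_\Omega(\phi)$ the chain $\widehat\nu_{k,p}(G,\mu)\leq \sup_{\widetilde L_{k+1}}R_G=\sup_{L_{k+1,0}}R_\Omega$ holds for every $L_{k+1,0}$, and taking the infimum finishes the proof. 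This is precisely the discrete analogue of the fact that extension by zero from $W^{1,p}_0(\Omega)$ into $W^{1,p}(M)$ preserves the Dirichlet energy, which is why Lemma~\ref{lem:domain monotonicity} is immediate in the continuous case.
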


\begin{lem}\label{lem:discrete local boundary concentration inequality}
Let $\Omega \subset V$.
For all $r>0$,
we have
\begin{equation*}
\frac{\mu\left(\Omega  \setminus B_{r}(\partial_{v} \Omega)\right)}{\mu(\Omega)} \leq \exp\left(1-\nu^{D}_{1,p}(\Omega,\mu)^{\frac{1}{p}}\,r\right).
\end{equation*}
\end{lem}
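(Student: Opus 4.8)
The plan is to follow verbatim the scheme used for Lemma~\ref{lem:local boundary concentration inequality} (equivalently Proposition~\ref{prop:boundary concentration inequality}), now in the graph setting: first establish a discrete ``relative volume comparison'' by feeding a suitable cut-off into the variational definition of $\nu^{D}_{1,p}(\Omega,\mu)$, and then iterate it. Throughout write $\nu:=\nu^{D}_{1,p}(\Omega,\mu)$ and recall that, since the graph distance $d$ is integer valued, $B_{r}(\partial_{v}\Omega)=B_{\lfloor r\rfloor}(\partial_{v}\Omega)$, so it suffices to prove the bound for non-negative integers $r$.

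For the comparison, fix integers $j\geq 0$ and $N\geq 1$ and define $\varphi\colon\Omega\to\mathbb{R}$ by
\[
\varphi(x):=\min\Bigl\{1,\ \tfrac{1}{N}\max\bigl\{0,\,d(x,\partial_{v}\Omega)-j\bigr\}\Bigr\}.
\]
Since $\varphi$ vanishes on $B_{j}(\partial_{v}\Omega)\supset\partial_{v}\Omega$ we have $\varphi\in L^{p}_{0}(\Omega,\mu)$, so the line $\mathbb{R}\varphi$ is an admissible one-dimensional subspace. Moreover $\varphi\equiv 1$ on $\Omega\setminus B_{j+N}(\partial_{v}\Omega)$, and $|\varphi(y)-\varphi(x)|\leq \tfrac{1}{N}$ whenever $x$ and $y$ form an edge (by the triangle inequality for $d$), this difference vanishing unless $x$ or $y$ lies in the annulus $W:=\bigl(B_{j+N}(\partial_{v}\Omega)\setminus B_{j}(\partial_{v}\Omega)\bigr)\cap\Omega$. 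Bounding the numerator of the Rayleigh quotient of $\varphi$ by $2N^{-p}\mu(W)$ and its denominator from below by $2\,\mu\bigl(\Omega\setminus B_{j+N}(\partial_{v}\Omega)\bigr)$, the definition of $\nu$ yields
\[
\bigl(1+\nu\,N^{p}\bigr)\,\mu\bigl(\Omega\setminus B_{j+N}(\partial_{v}\Omega)\bigr)\ \leq\ \mu\bigl(\Omega\setminus B_{j}(\partial_{v}\Omega)\bigr),
\]
which is the exact discrete counterpart of~(\ref{eq:relative volume comparison and eigenvalue}), with the continuous increment $\epsilon$ replaced by the integer $N$.

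From here I would argue exactly as in the proof of Proposition~\ref{prop:boundary concentration inequality}: choosing the step $N$ to be an integer comparable to $\nu^{-1/p}$, iterating the displayed inequality, and applying it once more to absorb the remaining part, one converts the geometric decay in $j$ of $\mu\bigl(\Omega\setminus B_{j}(\partial_{v}\Omega)\bigr)$ into the exponential bound $\exp\bigl(1-\nu^{1/p}r\bigr)$; dividing by $\mu(\Omega)\geq\mu\bigl(\Omega\setminus B_{0}(\partial_{v}\Omega)\bigr)$ gives the assertion.

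The two displayed inequalities are routine; the only step needing care is the iteration. In the continuous setting the increment $\epsilon$ may be taken to be any positive real, in particular $\epsilon=\nu^{-1/p}$, whereas on a graph the increment is forced to be a positive integer, so one must round $\nu^{-1/p}$ to a nearby integer and check that the resulting loss is absorbed by the factor $e^{1}$ in the exponential. This bookkeeping of the graph-distance discreteness, rather than any new phenomenon, is where the minor effort lies; the construction of the cut-off $\varphi$ (the discrete substitute for a function with $\Vert\nabla\varphi\Vert\leq 1/\epsilon$) is the obvious one.
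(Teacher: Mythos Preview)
Your proposal is correct and follows exactly the route the paper indicates: the paper leaves the proof of this lemma to the reader, pointing back to the argument of Proposition~\ref{prop:boundary concentration inequality}, and your Lipschitz cut-off $\varphi$, the resulting relative-volume inequality $(1+\nu N^{p})\,\mu(\Omega\setminus B_{j+N})\leq\mu(\Omega\setminus B_{j})$, and the subsequent iteration are precisely the discrete transcription of that argument. You also correctly isolate the only genuinely new point in the graph setting---that the step $N$ must be a positive integer---and this is indeed harmless, since testing with $\varphi=1_{\{z\}}$ for any interior vertex $z$ shows $\nu^{D}_{1,p}(\Omega,\mu)\leq 1$, so $\nu^{-1/p}\geq 1$ and an integer step near $\nu^{-1/p}$ is always available.
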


\begin{lem}\label{lem:discrete higher order to smallest}
For any pairwise disjoint sequence $\{\Omega_{\alpha}\}^{k}_{\alpha=0}$ of subsets in $V$,
we have
\begin{equation*}
\nu^{D}_{k+1,p}\left(  \bigsqcup^{k}_{\alpha=0} \Omega_{\alpha},\mu  \right)\leq \max_{\alpha=0,\dots,k} \nu^{D}_{1,p}(\Omega_{\alpha},\mu).
\end{equation*}
\end{lem}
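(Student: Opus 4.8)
The plan is to mimic the proof of Lemma~\ref{lem:higher order to smallest} almost line by line; the only genuinely new point is the treatment of edges joining two distinct pieces $\Omega_{\alpha}$, $\Omega_{\beta}$. First I would fix $\epsilon>0$ and, for each $\alpha=0,\dots,k$, choose a non-zero $\phi_{\alpha}\in L^{p}_{0}(\Omega_{\alpha},\mu)$ whose Rayleigh quotient
\begin{equation*}
\frac{\sum_{x,y\in\Omega_{\alpha}}|\phi_{\alpha}(y)-\phi_{\alpha}(x)|^{p}\mu_{x,y}}{2\sum_{x\in\Omega_{\alpha}}|\phi_{\alpha}(x)|^{p}\mu(x)}
\end{equation*}
is smaller than $\nu^{D}_{1,p}(\Omega_{\alpha},\mu)+\epsilon$, and then extend each $\phi_{\alpha}$ by zero to $\Omega:=\bigsqcup_{\alpha=0}^{k}\Omega_{\alpha}$. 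One checks that these extensions lie in $L^{p}_{0}(\Omega,\mu)$: if $x\in\partial_{v}\Omega$ lies in the piece $\Omega_{\alpha}$, then $x$ has a neighbour outside $\Omega$, hence outside $\Omega_{\alpha}$, so $x\in\partial_{v}\Omega_{\alpha}$ and $\phi_{\alpha}(x)=0$. The functions $\phi_{0},\dots,\phi_{k}$ are linearly independent, and I would let $L_{\epsilon}$ be the $(k+1)$-dimensional subspace of $L^{p}_{0}(\Omega,\mu)$ they span.

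The key observation is as follows. Let $\phi=\sum_{\gamma}c_{\gamma}\phi_{\gamma}\in L_{\epsilon}$, and consider an edge $\{x,y\}$ with $x\in\Omega_{\alpha}$ and $y\in\Omega_{\beta}$, $\alpha\neq\beta$. Since $x$ is adjacent to $y\notin\Omega_{\alpha}$ we have $x\in\partial_{v}\Omega_{\alpha}$, hence $\phi_{\alpha}(x)=0$; as $\phi_{\gamma}$ vanishes off $\Omega_{\gamma}$ for $\gamma\neq\alpha$, this forces $\phi(x)=0$, and symmetrically $\phi(y)=0$, so this edge contributes nothing to $\sum_{x,y\in\Omega}|\phi(y)-\phi(x)|^{p}\mu_{x,y}$. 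Consequently both quadratic forms split over $\alpha$:
\begin{align*}
\sum_{x,y\in\Omega}|\phi(y)-\phi(x)|^{p}\mu_{x,y}&=\sum_{\alpha}|c_{\alpha}|^{p}\sum_{x,y\in\Omega_{\alpha}}|\phi_{\alpha}(y)-\phi_{\alpha}(x)|^{p}\mu_{x,y},\\
\sum_{x\in\Omega}|\phi(x)|^{p}\mu(x)&=\sum_{\alpha}|c_{\alpha}|^{p}\sum_{x\in\Omega_{\alpha}}|\phi_{\alpha}(x)|^{p}\mu(x).
\end{align*}
Writing $I:=\{\alpha\mid c_{\alpha}\neq 0\}$ and feeding these two identities into the elementary inequality~(\ref{eq:elementary inequality}), the Rayleigh quotient of $\phi$ is at most $\max_{\alpha\in I}$ of the Rayleigh quotient of $\phi_{\alpha}$, hence strictly less than $\max_{\alpha}\nu^{D}_{1,p}(\Omega_{\alpha},\mu)+\epsilon$. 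Taking the supremum over $\phi\in L_{\epsilon}\setminus\{0\}$, using $L_{\epsilon}$ as a competitor in the definition of $\nu^{D}_{k+1,p}(\Omega,\mu)$, and letting $\epsilon\to 0$ gives the claim.

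The main obstacle — essentially the only one — is the cross-edge computation in the second paragraph, which plays the role of ``$\phi_{\alpha}\equiv 0$ outside $\Omega_{\alpha}$'' in the manifold proof and works precisely because $L^{p}_{0}(\Omega_{\alpha},\mu)$ is defined by vanishing on the vertex boundary $\partial_{v}\Omega_{\alpha}$. Everything else is the purely formal algebra already carried out in Lemma~\ref{lem:higher order to smallest}, so I would not expect further difficulty; in particular the denominators $2\sum_{x\in\Omega_{\alpha}}|\phi_{\alpha}(x)|^{p}\mu(x)$ are positive for $\alpha\in I$ since $G$ is connected (so $\mu(x)>0$ for every $x$) and $\phi_{\alpha}\neq 0$.
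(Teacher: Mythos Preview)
Your proposal is correct and is exactly the argument the paper has in mind: the paper does not give a separate proof of this lemma but explicitly says the discrete lemmas are shown ``along the line of the argument in Section~\ref{sec:Poincare constants and concentration}'' and leaves the details to the reader. Your adaptation of Lemma~\ref{lem:higher order to smallest}, together with the cross-edge observation (which is the only place the discrete case needs an extra word and which you handle correctly via the vertex-boundary condition), is precisely that argument.
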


We can conclude the following discrete analogue of Theorem \ref{thm:main result}:
\begin{thm}\label{thm:discrete modified main result}
 Let $(G,\mu)$ be a weighted graph. For any sequence
 $\{A_{\alpha}\}^{k}_{\alpha=0}$ of subsets of $V$ we have
\begin{equation}\label{eq:dis}
\nu_{k,p}(G,\mu)^{\frac{1}{p}} \leq \frac{2}{ \mathcal{D}(\{A_{\alpha}\})      }\max_{\alpha=0,\dots,k}  \log \frac{e(\mu(V)-\sum_{\beta\neq \alpha}\mu(A_{\alpha})    )}{\mu(A_{\alpha})}.
\end{equation}
\end{thm}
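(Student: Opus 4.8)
The plan is to mirror, in the discrete setting, the proof of Theorem~\ref{thm:modified main result} essentially verbatim, substituting the three discrete key lemmas for their Riemannian counterparts. First I would observe that it suffices to prove the estimate for $\widehat{\nu}_{k,p}(G,\mu)$ in place of $\nu_{k,p}(G,\mu)$, since $\nu_{k,p}(G,\mu)\le \widehat{\nu}_{k,p}(G,\mu)$ by the discrete comparison lemma stated just above. So set $R:=\mathcal{D}(\{A_{\alpha}\})$ and fix $r\in(0,R/2)$; define $\Omega_{\alpha}$ to be the open $r$-neighborhood (in the graph distance) of $A_{\alpha}$ in $V$. Because the $A_{\alpha}$ are pairwise at distance at least $R>2r$, the $\Omega_{\alpha}$ are pairwise disjoint. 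Lemmas~\ref{lem:discrete domain monotonicity} and~\ref{lem:discrete higher order to smallest} then give
\begin{equation*}
\widehat{\nu}_{k,p}(G,\mu)\le \nu^{D}_{k+1,p}\Bigl(\bigsqcup_{\alpha=0}^{k}\Omega_{\alpha},\mu\Bigr)\le \max_{\alpha}\nu^{D}_{1,p}(\Omega_{\alpha},\mu),
\end{equation*}
so everything reduces to a single domain $\Omega_{\alpha_{0}}$ realizing the maximum.

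Next I would run the boundary-concentration argument on $\Omega_{\alpha_{0}}$. Fix $\epsilon>0$ and put
\begin{equation*}
r_{0}:=\frac{1}{\nu^{D}_{1,p}(\Omega_{\alpha_{0}},\mu)^{\frac{1}{p}}}\,\log\frac{e\bigl(\mu(V)-\sum_{\alpha\neq\alpha_{0}}\mu(A_{\alpha})\bigr)}{\mu(A_{\alpha_{0}})}+\epsilon,
\end{equation*}
so that, by construction, $\mu(A_{\alpha_{0}})\bigm/\bigl(\mu(V)-\sum_{\alpha\neq\alpha_{0}}\mu(A_{\alpha})\bigr)>\exp(1-\nu^{D}_{1,p}(\Omega_{\alpha_{0}},\mu)^{1/p}r_{0})$. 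The key monotonicity input is that $\mu(\Omega_{\alpha_{0}})\le \mu(V)-\sum_{\alpha\neq\alpha_{0}}\mu(\Omega_{\alpha})\le \mu(V)-\sum_{\alpha\neq\alpha_{0}}\mu(A_{\alpha})$, using $A_{\alpha}\subset\Omega_{\alpha}$ and disjointness; hence the normalized measure of $A_{\alpha_{0}}$ inside $\Omega_{\alpha_{0}}$ is at least $\mu(A_{\alpha_{0}})\bigm/\bigl(\mu(V)-\sum_{\alpha\neq\alpha_{0}}\mu(A_{\alpha})\bigr)$. Combining this lower bound with Lemma~\ref{lem:discrete local boundary concentration inequality} applied at radius $r_{0}$ forces
\begin{equation*}
\frac{\mu(A_{\alpha_{0}})}{\mu(\Omega_{\alpha_{0}})}>\frac{\mu\bigl(\Omega_{\alpha_{0}}\setminus B_{r_{0}}(\partial_{v}\Omega_{\alpha_{0}})\bigr)}{\mu(\Omega_{\alpha_{0}})},
\end{equation*}
so $A_{\alpha_{0}}$ cannot be contained in $\Omega_{\alpha_{0}}\setminus B_{r_{0}}(\partial_{v}\Omega_{\alpha_{0}})$, i.e. $B_{r_{0}}(\partial_{v}\Omega_{\alpha_{0}})\cap A_{\alpha_{0}}\neq\emptyset$. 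Since every point of $\Omega_{\alpha_{0}}$ is within $r$ of $A_{\alpha_{0}}$ and the vertex boundary of $\Omega_{\alpha_{0}}$ lies in $\Omega_{\alpha_{0}}$, this gives $r\le r_{0}$ (a point of $A_{\alpha_{0}}$ within $r_{0}$ of $\partial_{v}\Omega_{\alpha_{0}}$, combined with the fact that $\partial_{v}\Omega_{\alpha_{0}}$ is at distance at least $r$ from $A_{\alpha_{0}}$; one has to handle the graph-distance bookkeeping carefully here).

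Finally, letting $\epsilon\to0$ and $r\to R/2$ yields
\begin{equation*}
R\le \frac{2}{\widehat{\nu}_{k,p}(G,\mu)^{\frac{1}{p}}}\max_{\alpha=0,\dots,k}\log\frac{e\bigl(\mu(V)-\sum_{\beta\neq\alpha}\mu(A_{\beta})\bigr)}{\mu(A_{\alpha})},
\end{equation*}
which rearranges to~\eqref{eq:dis} after applying $\nu_{k,p}(G,\mu)\le\widehat{\nu}_{k,p}(G,\mu)$. I expect the only genuinely delicate point to be the discrete neighborhood bookkeeping in the step $r\le r_{0}$: in the graph metric one must be careful about whether "open $r$-neighborhood" and "vertex boundary $\partial_{v}$" interact so that a vertex of $A_{\alpha_{0}}$ lying in the closed $r_{0}$-ball of $\partial_{v}\Omega_{\alpha_{0}}$ really does force $r\le r_{0}$; with the conventions fixed in the subsection this is a short combinatorial check, but it is the place where the proof genuinely differs from the manifold case. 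Everything else is a transcription of the proof of Theorem~\ref{thm:modified main result}.
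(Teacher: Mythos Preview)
Your proposal is correct and is precisely the approach the paper intends: the paper does not give a proof of Theorem~\ref{thm:discrete modified main result} but explicitly says ``One can show them along the line of the argument in Section~\ref{sec:Poincare constants and concentration}. The proof is left to the reader,'' and you have carried out exactly that transcription, using Lemmas~\ref{lem:discrete domain monotonicity}, \ref{lem:discrete local boundary concentration inequality}, \ref{lem:discrete higher order to smallest} in place of Lemmas~\ref{lem:domain monotonicity}, \ref{lem:local boundary concentration inequality}, \ref{lem:higher order to smallest} and the discrete comparison $\nu_{k,p}(G,\mu)\le\widehat{\nu}_{k,p}(G,\mu)$ in place of Proposition~\ref{prop:equivalence between Poincare constant and modified Poincare constant}. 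Your flagging of the vertex-boundary bookkeeping in the step $r\le r_{0}$ is appropriate, since with $\partial_{v}\Omega\subset\Omega$ the distance from $A_{\alpha_{0}}$ to $\partial_{v}\Omega_{\alpha_{0}}$ is governed slightly differently than in the manifold case; this is indeed the only place requiring care beyond straight transcription.
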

\begin{rem}
 The case where $k=1$ and $p=2$ were studied in (\cite{AM}). In this
 case Alon-Milman obtained a similar bound for $\lambda_1(G,\mu)$. 
 \end{rem}
\begin{rem}Let us compare Theorem \ref{thm:discrete modified main
 result} with Chung-Grigor'yan-Yau's result (\cite{CGY1,CGY2}, see
 Theorem 3.22 of \cite{Gr}). Let
 $N:=\#V$. Given $A_0,A_1,\dots,A_k\subset V$ such that
 $\mathcal{D}(\{A_{\alpha}\})>1$ we set
\begin{equation*}
 \delta(\{A_{\alpha}\}):=\max_{\alpha\neq
  \beta}\Big(\frac{\mu(V\setminus A_{\alpha})\mu(V\setminus A_{\beta})}{\mu(A_{\alpha})\mu(A_{\beta})}\Big)^{\frac{1}{2(\mathcal{D}(\{A_{\alpha}\})-1)}}.
 \end{equation*}
 We may assume that $\mu(A_{0})\leq \mu(A_{1})\leq \cdots\leq \mu(A_{k})$,
 and thus $\mu(A_{\alpha})\leq \frac{1}{2}\mu(V)$ for $\alpha=0,\dots,k-1$;
 in particular,
 $\delta(\{A_{\alpha}\})\geq 1$. In the case of $p=2$ Chung-Grigor'yan-Yau showed that
 \begin{align}\label{ineq:CGY}
  \lambda_k (G,\mu)\leq \frac{\delta(\{A_{\alpha}\})-1}{\delta(\{A_{\alpha}\})+1}\lambda_{N-1}(G,\mu).
  \end{align}Note that $\frac{N}{N-1}\leq \lambda_{N-1}(G,\mu)\leq 2$
 (Theorem 2.11 of \cite{Gr})
 and hence the
 right-hand side of the above inequality (\ref{ineq:CGY}) is at most $2$ whereas our
 bound in (\ref{eq:dis}) can be greater than $2$. For example if we
 choose $\{A_{\alpha}\}_{\alpha=0}^k$ such that $\mu(A_{\alpha})$ is
 small and $\mathcal{D}(\{A_{\alpha}\})$ is not large then the right-hand
 side of (\ref{eq:dis}) is greater than $2$, hence in this case the
 inequality (\ref{eq:dis}) is not informative. It seems in many choices of
 $\{A_{\alpha}\}$ their inequality (\ref{ineq:CGY}) is better than our
 inequality (\ref{eq:dis}). Our inequality can be better when
 $(\mu(V)-\sum_{\beta \neq \alpha}\mu(A_{\beta}))/\mu(A_{\alpha})\sim 1$ and
 $\mathcal{D}(\{A_{\alpha}\})\gg 1$. For example, let us consider a
 family $\{(V_{\alpha},E_{\alpha})\}_{\alpha=0}^k$ of complete graphs such that
 $\# V_{\alpha}=k$. For each $\alpha=0,1,\dots,k-1$ we choose and fix a vertex
 $x_{\alpha}\in V_{\alpha}$ and connect $x_{\alpha}$ and $x_{\alpha+1}$
 by a path graph $(V_{\alpha}',E_{\alpha}')$ of $\#V_{\alpha}'\sim \log k$. We
 then set
 \begin{align*}
  V:=V_k \cup \bigcup_{\alpha=0}^{k-1}(V_{\alpha}\cup V_{\alpha}')
  \text{ and } E:=E_{k}\cup \bigcup_{\alpha=0}^{k-1} (E_{\alpha}\cup E_{\alpha}').
  \end{align*}Then $G:=(V,E)$ is a finite connected graph and we equip
 $G$ the simple weight $\mu$, i.e., $\mu_{x,y}=1$ iff $x,y$ form an edge
 in $E$. We then normalize $\mu$ so that $\mu(V)=1$. For each $\alpha=0,1,\dots,k$ we set
 $A_{\alpha}:=V_{\alpha}$. Then since $\mu(A_{\alpha})\doteqdot \frac{1}{k+1}$ we have $(1-\sum_{\beta\neq
 \alpha}\mu(A_{\beta}))/\mu(A_{\alpha})\sim 1$ and
 $\mathcal{D}(\{A_{\alpha}\})\sim \log k$. Thus in this graph $G$ our
 inequality (\ref{eq:dis}) implies
 \begin{align}\label{eq:dis2}
  \lambda_k(G,\mu)^{\frac{1}{2}}\lesssim 2/\log k.
  \end{align}On the other hand, since
 \begin{align*}
  \Big(
  \frac{(1-\mu(A_{\alpha}))(1-\mu(A_{\beta}))}{\mu(A_{\alpha})\mu(A_{\beta})}\Big)^{\frac{1}{2(\mathcal{D}(\{A_{\alpha}\})-1)}}
  \sim k^{\frac{1}{\log k}}=e,
  \end{align*}the inequality (\ref{ineq:CGY}) implies
 \begin{align}\label{eq:CGY2}
  \lambda_k(G,\mu)\lesssim \lambda_{N-1}(G,\mu)\sim 1.
  \end{align}Comparing (\ref{eq:dis2}) with (\ref{eq:CGY2}) our bound is
 better in this case.
 \end{rem}


\bigbreak
\noindent
{\it Acknowledgments.}
The authors are grateful to the anonymous referee for valuable comments.
The first author was supported in part by JSPS KAKENHI (17K14179).
The second author was supported in part by JPSJ Grant-in-Aid for Scientific Research on Innovative Areas ``Discrete Geometric Analysis for Materials Design" (17H06460).


\end{document}